\setlist[itemize]{noitemsep,topsep=0pt}   
\newtheoremstyle{newstyle}   
{} 			 				 
{12pt plus 1pt} 			 
{\mdseries} 				 
{} 
{\bfseries} 				 
{.} 						 
{ } 						 
{} 							 
\theoremstyle{newstyle}
\newtheorem{definition}{Definition}[section]
\newtheorem{theorem}[definition]{Theorem}
\newtheorem{corollary}[definition]{Corollary}
\newtheorem{lemma}[definition]{Lemma}
\newtheorem{proposition}[definition]{Proposition}
\newtheorem{fact}[definition]{Fact}
\newtheorem{example}[definition]{Example}
\newtheorem{question}[definition]{Question}
\DeclareMathAlphabet{\mathcalligra}{T1}{calligra}{m}{n}
\DeclareMathAlphabet{\mathpzc}{OT1}{pzc}{m}{it}
\title{ \Large{\textbf{Some combinatorial properties of semiselective ideals}} }
\author{ \large{ Juli\'{a}n C. Cano $\;\;\;\;\;\;\;\;$ Carlos A. Di Prisco $\;\;\;\;\;\;\;\;$ Michael Hru\v{s}\'{a}k } }
\date{}
\newcommand{\Addresses}{{
		\bigskip
		\footnotesize
		
		Juli\'{a}n C. Cano, \textsc{Universidad de Los Andes (Bogot\'a).}\par\nopagebreak
		\textit{E-mail address}, J.C.~Cano: \texttt{jc.canor@uniandes.edu.co}
		
		\medskip
		
		Carlos A. Di Prisco, \textsc{Universidad de Los Andes (Bogot\'a),  Instituto Venezolano de Investigaciones Cient\'{\i}ficas (Caracas), and Universidad Nebrija (Madrid).}\par\nopagebreak
		\textit{E-mail address}, C.A. Di Prisco: \texttt{ca.di@uniandes.edu.co}
		
		\medskip
		
		Michael Hru\v{s}\'{a}k, \textsc{Universidad Nacional Aut\'onoma de M\'exico (Morelia).}\par\nopagebreak
		\textit{E-mail address}, M. Hru\v{s}\'{a}k: \texttt{michael@matmor.unam.mx}
		
}}
\begin{document}
			
\maketitle
			
\sloppy

\begin{abstract}
\noindent
We present several combinatorial properties of semiselective ideals on the set of natural numbers. The continuum hypothesis implies that the complement of every selective ideal contains a selective ultrafilter, however for semiselective ideals this is not the case. We prove that under certain hypothesis, for instance $V=L$, there are semiselective ideals whose complement does not contain a selective ultrafilter, and that it is also consistent that the complement of every semiselective ideal contains a selective ultrafilter; specifically, we show that if $V=L$ then there is a generic extension of $V$ where this occurs. We present other results concerning semiselective ideals, namely an alternative proof of Ellentuck's theorem for the local Ramsey property, and we prove some facts about the additivity of the ideal of local Ramsey null sets, and also about the generalized Suslin operation on the algebra of local Ramsey sets.

\medskip

\noindent \textit{Key words and phrases:} ideals of sets, semiselective ideals, selective ultrafilters. 

\smallskip

\noindent \textit{2020 Mathematics Subject Classification:} 03E05, 03E35, 05D10. 
\end{abstract}

\section{Introduction}

Ideals and filters of sets have been studied extensively from different points of view. Of particular interest for combinatorial set theory have been  the concepts of  selective ideal and Ramsey  ideal (see, for example, \cite{Hrusak, Uzcategui}). The Ramsey property  of sets of real numbers and its local version with respect to a selective ideal  was studied by Mathias in \cite{Mathias}, and also by Ellentuck, Farah, Matet, and Todorcevic, among other authors (see \cite{Ellentuck, Farah, Matet1, Todorcevic(BookTopology), Todorcevic(BookRamsey)}). Ellentuck in \cite{Ellentuck} gave a topological characterization of the Ramsey property that was later extended to the local Ramsey property with respect to a selective ideal. In this same vein, semiselective ideals were introduced in \cite{Farah}, where Farah showed not only that Ellentuck's characterization can be extended to the Ramsey property localized on a semiselective ideal, but also that semiselectivity is exactly what is needed of the ideal to get this topological characterization of the Ramsey property localized at an ideal, bringing to light  the importance of this concept.

\medskip

We will present here several results about the combinatorial structure of semiselective ideals on the set of natural numbers. We extend to semiselective ideals some results that were first proved for the ideal of finite sets of  natural numbers or for  selective ideals.

\medskip

This article is organized as follows. In Section \ref{semiselectiveideals} we review the notions of selective and semiselective ideals as well as the local Ramsey property with respect to an ideal. In Section \ref{newproof} we propose an alternative proof of  Ellentuck's theorem localized on a semiselective ideal. This proof is inspired on the short proof of Ellentuck's theorem given by Matet in \cite{Matet2} and avoids the use of the combinatorial forcing used in the proofs given in \cite{Farah, Todorcevic(BookRamsey)}. Section \ref{additivity} presents results related to the additivity of the ideal of sets that are Ramsey null with respect to a semiselective ideal, and also a characterization of semiselectivity is given in terms of closure under Suslin operation. 

\medskip

Other combinatorial properties of semiselective ideals are investigated in the final sections of the article; in particular, under which conditions  semiselective coideals contain selective ultrafilters. Similar problems for different types of ideals and filters have been studied by Matet and Pawlikowski in \cite{Matet-Pawlikowski}. It was shown by Mathias in \cite{Mathias} that if the continuum hypothesis holds, then every selective coideal contains a selective ultrafilter. We show in Section \ref{chandSuslin} that if the continuum hypothesis holds and there is a Suslin tree, then there exists a semiselective ideal whose complement does not contain a selective ultrafilter. In the final Section \ref{everysemiselective} we show that it is consistent that every semiselective coideal contains a selective ultrafilter. We get this consistency result via an iterated forcing construction producing a model where the cardinality of the continuum is $\mathfrak{c} = \aleph_2$. It remains open if it is possible to have that every semiselective coideal contains a selective ultrafilter together with $\mathfrak{c} = \aleph_1$.

\medskip

We use standard set theoretic notation. Given any set $X$ and any cardinal $\lambda$, we denote by $[X]^{\lambda}$ the set of subsets of $X$ of cardinality $\lambda$; analogously, we denote by $[X]^{<\lambda}$ the set of subsets of $X$ of cardinality less than $\lambda$; in particular, $[X]^{\omega}$ is the set of countably infinite subsets of $X$, and $[X]^{<\omega}$ is the set of finite subsets of $X$. 

\medskip

For $s,t \in [\omega]^{<\omega}$ and $A\in [\omega]^{\omega}$, we write $s \sqsubset A$ if $s$ is an initial segment of $A$ in its increasing enumeration. Similarly, we write $s \sqsubseteq t$ if $s$ is an initial segment of $t$, and we put $s \sqsubset t$ whenever $s \sqsubseteq t$ and $s\neq t$. For $n\in \omega$ and $s\in [\omega]^{<\omega}$, if $s \neq \emptyset$ and $\max s <n$ then we just write $s<n$, and we always put $\emptyset <n$. Given a set $A\in [\omega]^{\omega}$, let $A/n = \{ m\in A : n<m \}$ for each $n\in \omega$; likewise, let $A/s = \{ m\in A : s < m \}$ for each $s\in [\omega]^{<\omega}$.

\medskip

For every $s\in [\omega]^{<\omega}$, let $[s] = \{ B\in [\omega]^{\omega} : s \sqsubset B \}$, and notice that the family $\left \{ [s] : s\in [\omega]^{<\omega} \right \}$ forms a countable basis of closed--open sets for a Polish topology over $[\omega]^{\omega}$, called the \textit{metrizable topology}. Additionally, for every $s\in [\omega]^{<\omega}$ and every $A\in [\omega]^{\omega}$, we consider the set $[s,A]$ defined by $[s,A] = \{ B\in [s] : B/s \subseteq A \} = \{ B\in [\omega]^{\omega} : s \sqsubset B \subseteq s \cup A \}$. The collection $\mathsf{Exp} ([\omega]^{\omega}) = \{ [s,A] : s\in [\omega]^{<\omega} \wedge A\in [\omega]^{\omega} \}$ forms an uncountable basis of closed--open sets for a non--metrizable topology over $[\omega]^{\omega}$, called the \textit{Ellentuck topology}, which is finer than the metrizable topology. 

\medskip

Ramsey's theorem (see \cite{Ramsey}) states that for all $n,k \in \omega$ with $n,k\neq 0$, if $[\omega]^{n} = \bigcup_{i<k} G_{i}$ is any partition of $[\omega]^{n}$ into $k$ pieces, then there are $H\in [\omega]^{\omega}$ and $i<k$ such that $[H]^{n} \subseteq G_{i}$. In \cite{Ellentuck}, Ellentuck proved that if $[\omega]^{\omega} = \bigcup_{i<k} \mathcal{P}_{i}$ is any finite partition of $[\omega]^{\omega}$, where each piece $\mathcal{P}_{i}$ has the Baire property in the Ellentuck topology of $[\omega]^{\omega}$, then there is $H\in [\omega]^{\omega}$ such that $[H]^{\omega} \subseteq \mathcal{P}_{i}$ for some $i<k$.

\medskip

We say that a set $\mathcal{A} \subseteq [\omega]^{\omega}$ is \textit{Ramsey}\footnote{We adopt the terminology of \cite{Todorcevic(BookRamsey)}, referring to a set as \textit{Ramsey} when it has the combinatorial property often called \textit{completely Ramsey}.} if for every $[s,C]$ there is some $B\in [s,C]$ such that either $[s,B] \subseteq \mathcal{A}$ or $[s,B] \cap \mathcal{A} = \emptyset$; moreover, if the second option always holds, then we say that $\mathcal{A}$ is \textit{Ramsey null}. Erd\H os and Rado showed in \cite{Erdos-Rado}, using the axiom of choice, that there exist subsets of $[\omega]^{\omega}$ which are not Ramsey. Also, Galvin and Prikry proved in \cite{Galvin-Prikry} that the Borel sets in the metrizable topology of $[\omega]^{\omega}$ are Ramsey, and Silver in \cite{Silver} extended this fact to analytic sets. Additionally, Ellentuck in \cite{Ellentuck} characterized topologically Ramsey sets on $[\omega]^{\omega}$ as follows: let $\mathcal{A} \subseteq [\omega]^{\omega}$ be given, then $\mathcal{A}$ is Ramsey if and only if it has the Baire property with respect to  the Ellentuck topology, and $\mathcal{A}$ is Ramsey null if and only if it is meager with respect to the Ellentuck topology.

\section{Semiselective ideals and the local Ramsey property}\label{semiselectiveideals}

This section provides a brief introduction to selective and semiselective ideals and their interaction with the local Ramsey property, concluding with an exposition of the local Ellentuck theorem. The background for this section is drawn primarily from \cite[Section 7.1]{Todorcevic(BookRamsey)}.

\medskip
    
A family  $\mathcal{I}$ of subsets of an infinite set $X$ is an \textit{ideal} on $X$  if  $X \notin \mathcal{I}$,  $[X]^{<\omega} \subseteq \mathcal{I}$, and $\mathcal {I}$ is closed under taking subsets and finite unions of its elements. The complement of an ideal $\mathcal{I}$ on $X$ with respect to $\wp(X)$ is a \textit{coideal} and is denoted by $\mathcal{I}^{+}$, hence $\mathcal{I}^{+} = \wp(X) \setminus \mathcal{I}$. Given  $Y \in \mathcal{I}^{+}$, we denote by $\mathcal{I}^{+} \!\restriction\! Y$ the collection $\{ Z\in\mathcal{I}^{+} : Z \subseteq Y \}$. A \textit{non--principal ultrafilter} $\mathcal{U}$ on $X$ is basically a maximal coideal. 

\medskip

Fix an ideal $\mathcal{I}$ on $\omega$, and fix a quasi--order\footnote{A quasi--order is a binary relation that is reflexive and transitive.} relation $\preceq$ on $\mathcal{I}^{+}$; also, let $\mathcal{D} \subseteq \mathcal{I}^{+}$ be given. We say that $\mathcal{D}$ is \textit{dense} in $(\mathcal{I}^{+} , \preceq  )$ whenever for every $N \in \mathcal{I}^{+}$ there is $M \in \mathcal{D}$ such that $M \preceq  N$. We say that $\mathcal{D}$ is \textit{open} in $(\mathcal{I}^{+} , \preceq )$ whenever for each $N \in \mathcal{I}^{+}$ and each $M \in \mathcal{D}$, if $N \preceq  M$ then $N \in \mathcal{D}$.  We say that $\mathcal{D}$ is \textit{dense--open} in $(\mathcal{I}^{+} , \preceq )$ if $\mathcal{D}$ is both dense and open in $(\mathcal{I}^{+} , \preceq )$. For any ideal $\mathcal{I}$ on $\omega$, we will only use the quasi--orders on $\mathcal{I}^{+}$ given by the inclusion relation $\subseteq$ and the almost inclusion relation $\subseteq^{*}$. 

\medskip

The following is an interesting example of a dense--open set in $(\mathcal{I}^{+}, \subseteq)$, which will be implicitly used in the proof of Lemma \ref{Semiselective_Matet_Lemma}.

\begin{example}
	Let $\mathcal{I}$ be an ideal on $\omega$. Given any $t\in [\omega]^{<\omega}$ and $\mathcal{A}\subseteq [\omega]^{\omega}$, consider the set $\mathcal{S}_{t}^{\mathcal{A}} \subseteq [\omega]^{\omega}$ defined by $\mathcal{S}_{t}^{\mathcal{A}} = \{ D \in [\omega]^{\omega} : [t, D] \subseteq \mathcal{A} \}$.   Then, the set $\mathcal{E}_{t}^{\mathcal{A}} \subseteq \mathcal{I}^{+}$ defined by $\mathcal{E}_{t}^{\mathcal{A}} = \{ N\in\mathcal{I}^{+} : \mathcal{I}^{+} \!\restriction\! N \subseteq \mathcal{S}_{t}^{\mathcal{A}} \, \vee \, \mathcal{I}^{+} \!\restriction\! N \cap \mathcal{S}_{t}^{\mathcal{A}} = \emptyset \}$ is dense--open in $(\mathcal{I}^{+}, \subseteq)$.
\end{example}

We now present the well-known concept of selectivity for ideals, which will play an essential role in the final two sections of this article.

\begin{definition}[\cite{Mathias}]\label{Def_selective-ideal}
An ideal $\mathcal{I}$ on $\omega$ is \textit{selective} if for each $A\in \mathcal{I}^{+}$ and for every decreasing sequence $\{ {A}_{n} \}_{n\in \omega} \subseteq \mathcal{I}^{+} \!\restriction\! A$, there exists some $A_{\infty} \in \mathcal{I}^{+} \!\restriction\! A$ such that $A_{\infty}/m \subseteq {A}_{m}$ for all $m\in A_{\infty}$. In this case $A_\infty$ is said to be a diagonalization of the sequence  $\{ {A}_{n} \}_{n\in \omega}$.
\end{definition}

The prototypical example of a selective ideal on $\omega$ is the ideal $\mathcal{I}_{\mathcal{A}}$ generated by an almost disjoint family\footnote{An infinite family $\mathcal{A}$ of infinite subsets of $\omega$ is almost disjoint if $|X \cap Y|<\omega$ for every $X,Y\in\mathcal{A}$ with $X \neq Y$.} $\mathcal{A}$ on $\omega$ (see \cite[Example 7.1.2]{Todorcevic(BookRamsey)}), that is, $\mathcal{I}_{\mathcal{A}} = \{ X \subseteq \omega : (\exists\, \mathcal{F} \in [\mathcal{A}]^{<\omega}) (X \subseteq^{*} \bigcup \mathcal{F}) \}$. Selective coideals were introduced  by Mathias in \cite{Mathias} under the name of happy families. The following is a convenient characterization of selectivity (see \cite[Lemma 9.1]{Todorcevic(BookTopology)} and \cite[Lemma 7.4]{Todorcevic(BookRamsey)}).

\begin{proposition}[] \label{selective=(p)+(q)}
(Todorcevic, \cite{Todorcevic(BookTopology), Todorcevic(BookRamsey)}). Let $\mathcal I$ be an ideal on $\omega$. Then, the following statements are equivalent:
\begin{itemize}
\item[(a)] The ideal $\mathcal I$  is selective.
\item[(b)] The ideal $\mathcal I$ has the following two properties:
     \setlist{nolistsep}
	\begin{enumerate}
	\setlength{\itemsep}{0pt} 
		\item[$(p)$] For every $A\in \mathcal{I}^{+}$ and every decreasing sequence $\{A_{n}\}_{n\in\omega} \subseteq \mathcal{I}^{+} \!\restriction\! A$, there exists some $B\in \mathcal{I}^{+} \!\restriction\! A$ such that $B\subseteq^{*} A_{n}$ for all  $n\in \omega$.
		\item[$(q)$] For every $A\in \mathcal{I}^{+}$ and every infinite partition $A = \bigcup_{k\in\omega} f_{k}$, where each $f_{k}$ is finite, there exists some $B\in \mathcal{I}^{+} \!\restriction\! A$ such that for each $k\in \omega$ the set $f_{k}\cap B$ has at most one element. 
\end{enumerate}
\end{itemize}
\end{proposition}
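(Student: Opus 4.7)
The implication (a)$\Rightarrow$(b) is straightforward. Suppose $\mathcal{I}$ is selective and let $A_\infty$ be a diagonalization of a decreasing sequence $\{A_n\}_n\subseteq \mathcal{I}^+\!\restriction\! A$. For (p), for each $n$ choose $m\in A_\infty$ with $m\geq n$; then $A_\infty/m\subseteq A_m\subseteq A_n$, and since $A_\infty\setminus(A_\infty/m)$ is finite, $A_\infty\subseteq^* A_n$. For (q), given a partition $A=\bigsqcup_k f_k$ with each $f_k$ finite, the sets $A_n:=A\setminus\bigcup\{f_k:f_k\cap[0,n]\neq\emptyset\}$ form a decreasing sequence in $\mathcal{I}^+\!\restriction\! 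A$ (each cofinite in $A$); a diagonalization $B$ of this sequence necessarily satisfies $|B\cap f_k|\leq 1$ for every $k$, because if $m<m'$ both belonged to $B\cap f_k$, then $m'\in B/m\subseteq A_m$ but also $m'\in f_k\subseteq A\setminus A_m$, a contradiction.

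For the converse (b)$\Rightarrow$(a), the plan is to apply (p) first and then (q) to a partition of the pseudo-intersection into finite blocks whose boundaries encode the diagonalization property. Given a decreasing $\{A_n\}_n$, use (p) to obtain $B\in\mathcal{I}^+\!\restriction\! A$ with $B\subseteq^* A_n$ for every $n$. If $B\cap\bigcap_n A_n\in\mathcal{I}^+$, this set is already a diagonalization, since it lies in every $A_n$; otherwise, I would construct a partition of $B$ into finite intervals $f_k:=B\cap(n_{k-1},n_k]$ (with $n_{-1}:=-1$) at cutpoints $n_k\in B$ chosen so that $B\cap(n_k,\infty)\subseteq A_{n_k}$. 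Applying (q) then yields $A_\infty\in\mathcal{I}^+\!\restriction\! B$ meeting each $f_k$ in at most one point, and the diagonalization condition follows because for $m\in f_k\cap A_\infty$ and $m'\in A_\infty$ with $m<m'$, one has $m'>n_k$ and hence $m'\in A_{n_k}\subseteq A_m$ (the last inclusion by monotonicity, since $m\leq n_k$).

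The main obstacle will be the choice of cutpoints: the condition $B\cap(n_k,\infty)\subseteq A_{n_k}$ is equivalent to $\max(B\setminus A_{n_k})\leq n_k$, and for the pseudo-intersection $B$ produced by a single application of (p) this inequality may fail for infinitely many candidate $n_k$. The standard remedy is to iterate (p), applying it a second time to the sequence $\{B\cap A_{b_n}\}_{n\in\omega}$ where $\{b_n\}_n$ enumerates $B$; the resulting refinement $B'\in\mathcal{I}^+$ is contained, modulo finite sets, in every $A_{b_n}$, and inside $B'$ the required cutpoints can be located. The final verification that the $(q)$-selector is a diagonalization is then a direct calculation using the monotonicity of $\{A_n\}$ together with the interval structure of the partition.
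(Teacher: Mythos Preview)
The paper does not prove this proposition; it is stated with a citation to Todorcevic. Assessing your argument on its own: the direction (a)$\Rightarrow$(b) is correct, but there is a genuine gap in (b)$\Rightarrow$(a).

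Your proposed remedy does not close the gap you correctly identified. A second application of $(p)$ to $\{B\cap A_{b_n}\}_n$ yields a set $B'\subseteq B$ with $B'\subseteq^* A_{b_n}$ for all $n$, but this is no stronger than the relation $B\subseteq^* A_n$ you already had; there is still no reason for cutpoints $n_k$ with $B'\cap(n_k,\infty)\subseteq A_{n_k}$ to exist. Concretely, take $\mathcal I=[\omega]^{<\omega}$ and $A_n=\{m\in\omega:m\ge n+2\}$: then $B=\omega$ is a legitimate output of $(p)$, yet $\max(B\setminus A_n)=n+1>n$ for every $n$, so no cutpoint of your kind exists; and a further application of $(p)$ may simply return $B'=\omega$ again. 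The correct repair uses $(q)$, not $(p)$, a second time. After $(p)$ produces $B$, choose $n_k\in B$ recursively with only the weaker (always achievable) property $B\cap[n_{k+1},\infty)\subseteq A_{n_k}$; apply $(q)$ to the partition $f_k=B\cap[n_k,n_{k+1})$ to obtain a selector $C=\{c_0<c_1<\cdots\}$ with $c_i\in f_{k_i}$; then apply $(q)$ once more to $C$ partitioned into consecutive pairs $\{c_{2j},c_{2j+1}\}$. Consecutive elements $c_i<c_{i'}$ of the resulting $A_\infty$ then satisfy $i'\ge i+2$, hence $k_{i'}\ge k_i+2$, so $c_{i'}\ge n_{k_i+2}$ and therefore $c_{i'}\in A_{n_{k_i+1}}\subseteq A_{c_i}$ since $c_i<n_{k_i+1}$. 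Equivalently, one first derives from $(q)$ (applied twice) that for every $g:\omega\to\omega$ and every $A\in\mathcal I^+$ there is $D\in\mathcal I^+\!\restriction\!A$ whose increasing enumeration satisfies $d_{i+1}>g(d_i)$; applying this with $g(m)=\max(\{m\}\cup(B\setminus A_m))$ gives the diagonalization directly.
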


Next, we present the concept of semiselectivity for ideals, a combinatorial notion that is weaker than selectivity. We emphasize that all results in this article focus on semiselectivity for ideals.

\begin{definition}[\cite{Farah}] \label{Def_Semiselective_Ideal}
	An ideal $\mathcal{I}$ on $\omega$ is \textit{semiselective} if for each $A\in \mathcal{I}^{+}$ and for every sequence $\{ \mathcal{D}_{n} \}_{n\in \omega} \subseteq \wp(\mathcal{I}^{+})$ of dense--open sets in $(\mathcal{I}^{+}, \subseteq)$, there exists some $D_{\infty} \in \mathcal{I}^{+} \!\restriction\! A$ such that $D_{\infty}/m \in \mathcal{D}_{m}$ for all $m\in D_{\infty}$. In this case $D_\infty$ is said to be a diagonalization of the sequence  $\{\mathcal {D}_{n} \}_{n\in \omega}$.
\end{definition}

Equivalently, an ideal $\mathcal{I}$ on $\omega$ is semiselective if for each $A\in \mathcal{I}^{+}$ and for every sequence $\{ \mathcal{D}_{s} \}_{s\in [\omega]^{<\omega}} \subseteq \wp(\mathcal{I}^{+})$ of dense--open sets in $(\mathcal{I}^{+}, \subseteq)$, there exists some $D_{\infty} \in \mathcal{I}^{+} \!\restriction\! A$ such that $D_{\infty}/t \in \mathcal{D}_{t}$ for all $t\in [D_{\infty}]^{<\omega}$. In this case $D_{\infty}$ is also said to be a diagonalization of the sequence $\{ \mathcal{D}_{s} \}_{s\in [\omega]^{<\omega}}$.

\medskip

It is easy to show that every selective ideal is semiselective, but the converse implication is not true in general. The ideal $\emptyset\times [\omega]^{<\omega}$ is a semiselective ideal which is not selective (see \cite[Example 2.1]{Farah}). In what follows, we will concentrate on combinatorial properties of semiselective ideals. The following proposition gives a useful characterization of semiselectivity (see \cite[Lemma 7.9]{Todorcevic(BookRamsey)}).

\begin{proposition}[] \label{semiselective=(p^w)+(q^+)}
(Todorcevic, \cite{Todorcevic(BookRamsey)}). Let $\mathcal I$ be an ideal on $\omega$. Then, the following statements are equivalent:
\begin{itemize}
\item[(a)] The ideal $\mathcal I$  is semiselective.

\item[(b)] The ideal $\mathcal I$  has the following two properties:
	\setlist{nolistsep}
	\begin{enumerate}
		\setlength{\itemsep}{0pt} 
		\item[$(p^{w})$] For every $A\in \mathcal{I}^{+}$ and every sequence $\{ \mathcal{D}_{n} \}_{n\in\omega} \subseteq \wp(\mathcal{I}^{+})$ of dense--open sets in $(\mathcal{I}^{+}, \subseteq)$, there exists some $B\in \mathcal{I}^{+} \!\restriction\! A$ with the property that for each $n\in\omega$ there is $A_{n}\in \mathcal{D}_{n}$ such that $B \subseteq^{*} A_{n}$.
		\item[$(q)$] For every $A\in \mathcal{I}^{+}$ and every infinite partition $A = \bigcup_{k\in\omega} f_{k}$, where each $f_{k}$ is finite, there exists some $B\in \mathcal{I}^{+} \!\restriction\! A$ such that for each $k\in \omega$ the set $f_{k}\cap B$ has at most one element. 
	\end{enumerate}
	\end{itemize}
\end{proposition}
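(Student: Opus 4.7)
The plan is to prove the two implications separately. Direction $(a) \Rightarrow (b)$ is fairly routine by specializing the defining property of semiselectivity, while $(b) \Rightarrow (a)$ requires a more delicate combination of $(p^{w})$ with the diagonalization $(q)$.

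For $(a) \Rightarrow (b)$: to derive $(p^{w})$, given $A \in \mathcal{I}^{+}$ and dense-open $\{\mathcal{D}_{n}\}_{n \in \omega}$, I would apply the equivalent $[\omega]^{<\omega}$-indexed form of semiselectivity to the family $\mathcal{D}'_{s} := \mathcal{D}_{|s|}$, which remains dense-open because it depends only on $|s|$. The resulting witness $D_{\infty} \in \mathcal{I}^{+} \!\restriction\! A$ satisfies $D_{\infty}/t \in \mathcal{D}_{|t|}$ for every $t \in [D_{\infty}]^{<\omega}$, so for each $n$ one picks $t \in [D_{\infty}]^{<\omega}$ with $|t| = n$ and sets $A_{n} := D_{\infty}/t$, obtaining $A_{n} \in \mathcal{D}_{n}$ and $D_{\infty} \setminus A_{n} \subseteq D_{\infty} \cap [0,\max t]$ finite, so $D_{\infty} \subseteq^{*} A_{n}$. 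For $(q)$, given a partition $A = \bigcup_{k} f_{k}$ into finite pieces, define
\[
\mathcal{D}_{m} := \{ B \in \mathcal{I}^{+} : B \cap f_{k} = \emptyset \text{ for every } k \text{ with } \min f_{k} \leq m \}.
\]
Openness is immediate, and density follows by removing the finite set $\bigcup\{f_{k} : \min f_{k} \leq m\}$ from any $N \in \mathcal{I}^{+}$. Semiselectivity yields $D_{\infty}$ with $D_{\infty}/m \in \mathcal{D}_{m}$ for every $m \in D_{\infty}$, and if $d < d'$ both lay in $D_{\infty} \cap f_{k}$ then $\min f_{k} \leq d$ would contradict $D_{\infty}/d \cap f_{k} = \emptyset$, forcing $|D_{\infty} \cap f_{k}| \leq 1$.

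For $(b) \Rightarrow (a)$, assume $(p^{w})$ and $(q)$ and let $A \in \mathcal{I}^{+}$ and dense-open $\{\mathcal{D}_{n}\}_{n \in \omega}$ be given. I first pass to the sharpened sequence $\mathcal{E}_{n} := \bigcap_{m \leq n} \mathcal{D}_{m}$, which remains dense-open as a finite intersection of dense-open sets on $(\mathcal{I}^{+},\subseteq)$. Applying $(p^{w})$ produces $B \in \mathcal{I}^{+} \!\restriction\! A$ and $E_{n} \in \mathcal{E}_{n}$ with $B \subseteq^{*} E_{n}$; replacing each $E_{n}$ by $B \cap E_{0} \cap \cdots \cap E_{n}$ (still in $\mathcal{I}^{+}$ and in $\mathcal{E}_{n}$ by openness) one may assume the $E_{n}$'s are $\subseteq$-decreasing, each $E_{n}$ belongs to $\mathcal{D}_{m}$ for every $m \leq n$, and the finite sets $F_{n} := B \setminus E_{n}$ are $\subseteq$-increasing. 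The target reduces to producing $D_{\infty} \in \mathcal{I}^{+} \!\restriction\! B$ such that $D_{\infty} \cap (d,\infty) \cap F_{d} = \emptyset$ for every $d \in D_{\infty}$, for then $D_{\infty}/d \subseteq E_{d} \in \mathcal{D}_{d}$ and openness yields $D_{\infty}/d \in \mathcal{D}_{d}$. Such a $D_{\infty}$ is to be obtained by applying $(q)$ to a block partition $B = \bigsqcup_{k} e_{k}$ with $e_{k} := B \cap [m_{k}, m_{k+1})$, where the thresholds $m_{k}$ are chosen recursively with $m_{k+1} > \max F_{m_{k}}$, followed by a suitable grouping of the blocks into super-blocks and a second application of $(q)$ that spreads the diagonal further apart.

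The main obstacle is executing the block construction cleanly. The recursion $m_{k+1} > \max F_{m_{k}}$ controls $F_{d}$ only for $d \leq m_{k}$, so after one iteration $F_{d}$ can still reach into $e_{k+1}$ whenever $d \in e_{k}$. A single finite regrouping of consecutive blocks is always defeated at the last sub-block of each group, where the forbidden set of $d$ can leak into the first block of the next group. Overcoming this requires either a secondary application of $(q)$ to the diagonal produced by the first one with a carefully matched partition, or a genuine fusion construction interleaving $(p^{w})$ and $(q)$ so that the residual overlap is absorbed step by step. This subtle balancing between the pseudo-intersection supplied by $(p^{w})$ and the spreading supplied by $(q)$ is the technical heart of the argument.
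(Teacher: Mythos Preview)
The paper does not give its own proof of this proposition; it is stated with attribution to Todorcevic and left unproved, so there is no argument in the paper to compare against.

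Your $(a)\Rightarrow(b)$ direction is correct and complete.

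Your $(b)\Rightarrow(a)$ strategy is also correct, and you have accurately located the only real difficulty: after one application of $(q)$ to the blocks $e_k=B\cap[m_k,m_{k+1})$ with $m_{k+1}>\max F_{m_k}$, the selector $C=\{c_0<c_1<\cdots\}$ only satisfies $c_{i+2}>\max F_{c_i}$, not $c_{i+1}>\max F_{c_i}$. The resolution, however, is simpler than either of the alternatives you propose. Since $\mathcal I^{+}$ is a coideal and
\[
C=\{c_{2i}:i\in\omega\}\cup\{c_{2i+1}:i\in\omega\},
\]
one of the two halves lies in $\mathcal I^{+}\!\restriction\! C$; call it $D_\infty$. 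Any two elements of $D_\infty$ differ in their $C$-index by at least $2$, so for every $d\in D_\infty$ one has $D_\infty/d\subseteq E_d\in\mathcal D_d$, and openness of $\mathcal D_d$ gives $D_\infty/d\in\mathcal D_d$. Thus no second application of $(q)$ and no fusion is needed; with this one-line observation your argument is complete.
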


Let $\mathcal{I}$ be an ideal on $\omega$, and the corresponding coideal $\mathcal{I}^{+} \subseteq [\omega]^{\omega}$. We consider the subcollection $\mathsf{Exp} (\mathcal{I}^{+}) \subseteq \mathsf{Exp}([\omega]^{\omega})$ of basic open sets in the Ellentuck topology of $[\omega]^{\omega}$ described by 
\begin{center}
	$\mathsf{Exp} (\mathcal{I}^{+}) = \{ [s,A] : s\in [\omega]^{<\omega} \wedge A\in \mathcal{I}^{+} \}$. 
\end{center}

Although the family $\mathsf{Exp} (\mathcal{I}^{+})$ is not necessarily a basis for a topology on $[\omega]^{\omega}$, it is still possible to work with some abstract topological notions defined from $\mathsf{Exp} (\mathcal{I}^{+})$ as follows:  
\setlist{nolistsep}
\begin{enumerate}
	\setlength{\itemsep}{0pt}	
	\item[I.] A set $\mathcal{X} \subseteq [\omega]^{\omega}$ is $\mathsf{Exp}(\mathcal{I}^{+})-$\textit{open} if $\mathcal{X}$ is the union of some arbitrary family of members of $\mathsf{Exp}(\mathcal{I}^{+})$.
	\item[II.] A set $\mathcal{X} \subseteq [\omega]^{\omega}$ is $\mathsf{Exp}(\mathcal{I}^{+})-$\textit{nowhere dense} if for every $[a,A] \in \mathsf{Exp}(\mathcal{I}^{+})$ there is some $[b,B] \in \mathsf{Exp}(\mathcal{I}^{+})$ with $[b,B] \subseteq [a,A]$ such that $[b,B] \cap \mathcal{X} = \emptyset$.
	\item[III.] A set $\mathcal{X} \subseteq [\omega]^{\omega}$ is $\mathsf{Exp}(\mathcal{I}^{+})-$\textit{meager} if $\mathcal{X}$ is the union of some countably family of $\mathsf{Exp}(\mathcal{I}^{+})-$nowhere dense sets.
	\item[IV.] A set $\mathcal{X} \subseteq [\omega]^{\omega}$ has \textit{the abstract $\mathsf{Exp}(\mathcal{I}^{+})-$Baire property} if for every $[a,A] \in \mathsf{Exp}(\mathcal{I}^{+})$ there is some $[b,B] \in \mathsf{Exp}(\mathcal{I}^{+})$ with $[b,B] \subseteq [a,A]$ such that either $[b,B] \subseteq \mathcal{X}$ or $[b,B] \cap \mathcal{X} = \emptyset$.
	\item[V.] A set $\mathcal{X} \subseteq [\omega]^{\omega}$ has \textit{the $\mathsf{Exp}(\mathcal{I}^{+})-$Baire property} if there exists some $\mathsf{Exp}(\mathcal{I}^{+})-$open set $\mathcal{O}$ such that $\mathcal{X} \triangle \mathcal{O}$ is $\mathsf{Exp}(\mathcal{I}^{+})-$meager.
\end{enumerate}

\medskip

These topological notions were originally studied by Marczewski in \cite{Marczewski}, and were later used by Farah in \cite{Farah} to characterize the local Ramsey property for subsets of $[\omega]^{\omega}$  with respect to  the family $\mathsf{Exp} (\mathcal{I}^{+})$ where $\mathcal{I}$ is a semiselective ideal on $\omega$. With those notions in mind, the following facts can be easily verified (see \cite[Definition 3.1]{Farah}):

\begin{fact}[] \label{facts_abstract_topology}
Let $\mathcal{I}$ be an ideal on $\omega$. Then, the following holds:
\setlist{nolistsep}
\begin{enumerate}
	\setlength{\itemsep}{0pt}	
    \item[(1)] The $\mathsf{Exp}(\mathcal{I}^{+})-$open sets have both the $\mathsf{Exp}(\mathcal{I}^{+})-$Baire property and the abstract $\mathsf{Exp}(\mathcal{I}^{+})-$Baire property. 
    \item[(2)] Every $\mathsf{Exp}(\mathcal{I}^{+})-$nowhere dense set is a $\mathsf{Exp}(\mathcal{I}^{+})-$meager set, and also each $\mathsf{Exp}(\mathcal{I}^{+})-$meager set has the $\mathsf{Exp}(\mathcal{I}^{+})-$Baire property.
    \item[(3)] Every $\mathsf{Exp}(\mathcal{I}^{+})-$nowhere dense set has the abstract $\mathsf{Exp}(\mathcal{I}^{+})-$Baire property, and also each set with the abstract $\mathsf{Exp}(\mathcal{I}^{+})-$Baire property has the $\mathsf{Exp}(\mathcal{I}^{+})-$Baire property.
    \item[(4)] The collection of all $\mathsf{Exp}(\mathcal{I}^{+})-$nowhere dense sets is an ideal, and also the collection of all $\mathsf{Exp}(\mathcal{I}^{+})-$meager sets is a $\sigma-$ideal
    \item[(5)] The sets with the abstract $\mathsf{Exp}(\mathcal{I}^{+})-$Baire property form an algebra, and also the sets with the $\mathsf{Exp}(\mathcal{I}^{+})-$Baire property form a $\sigma-$algebra.
\end{enumerate} 
\end{fact}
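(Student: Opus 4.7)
The plan is to verify each of the nine assertions by adapting classical Baire-category arguments to the abstract basis $\mathsf{Exp}(\mathcal{I}^+)$; every claim reduces to a single separation lemma plus routine set-theoretic manipulation.

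First I would prove the \textbf{separation lemma}: for any $[a,A], [c,C] \in \mathsf{Exp}(\mathcal{I}^+)$, there exists $[b,B] \in \mathsf{Exp}(\mathcal{I}^+)$ with $[b,B] \subseteq [a,A]$ such that either $[b,B] \subseteq [c,C]$ or $[b,B] \cap [c,C] = \emptyset$. If $[a,A] \cap [c,C] = \emptyset$, take $[b,B] = [a,A]$. Otherwise pick $X$ in the intersection so $a,c$ are both initial segments of $X$ and, without loss of generality, $a \sqsubseteq c$ and $c \setminus a \subseteq A$. If $A \cap C \in \mathcal{I}^+$, set $[b,B] = [c, A \cap C]$, which sits inside both $[a,A]$ and $[c,C]$. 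Otherwise $A \cap C \in \mathcal{I}$, and since $A = (A \cap C) \cup (A \setminus C) \in \mathcal{I}^+$ we have $A \setminus C \in \mathcal{I}^+$; then $[b,B] = [a, A \setminus C]$ is disjoint from $[c,C]$ because any $Y \in [a, A \setminus C]$ has $Y/c \subseteq Y/a \subseteq A \setminus C$ disjoint from $C$ yet infinite, so $Y \notin [c,C]$.

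From this lemma one derives the abstract $\mathsf{Exp}(\mathcal{I}^+)$-Baire property for open sets: given $\mathcal{X} = \bigcup_i [c_i, C_i]$ and $[a,A]$, a careful case analysis --- based on whether $a \cup A \in \mathcal{X}$ and on positivity of the relevant $A \cap C_i$ --- produces the required $[b,B]$, using the good sub-case of the lemma to land inside some $[c_i, C_i]$ when $a \cup A \in \mathcal{X}$ (where $X/c_i$ is cofinite in $A$, forcing $A \cap C_i \in \mathcal{I}^+$) and the disjoint sub-case otherwise. The ordinary $\mathsf{Exp}(\mathcal{I}^+)$-Baire property for open sets is trivial via $\mathcal{X} = \mathcal{X} \triangle \emptyset$, a meager set has it via $\mathcal{X} = \emptyset \triangle \mathcal{X}$, and a nowhere-dense set has the abstract Baire property directly from its definition. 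Nowhere-denseness is closed under subsets by inspection and under finite unions by iterating the definition, so nowhere-dense sets form an ideal; countable reindexing then yields the $\sigma$-ideal of meager sets.

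To show abstract Baire implies Baire, let $\mathcal{O}$ be the union of all $[b,B] \subseteq \mathcal{X}$; then $\mathcal{O}$ is open and for any $[a,A]$ the abstract Baire property yields $[b,B] \subseteq [a,A]$ either inside $\mathcal{X}$ (hence in $\mathcal{O}$) or disjoint from $\mathcal{X}$, so $[b,B] \cap (\mathcal{X} \triangle \mathcal{O}) = \emptyset$, showing $\mathcal{X} \triangle \mathcal{O}$ is nowhere dense. The algebra of abstract-Baire sets is closed under complement by the symmetry of the definition and under finite unions by a two-step case analysis (first separate from $\mathcal{X}_1$, then from $\mathcal{X}_2$). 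For the $\sigma$-algebra of Baire sets, closure under complement reduces to showing each $\neg \mathcal{O}$ with $\mathcal{O}$ open has the Baire property --- obtained by taking the interior of $\neg \mathcal{O}$ and invoking the abstract Baire property of $\mathcal{O}$ --- while closure under countable unions follows from $\bigcup_n \mathcal{X}_n \triangle \bigcup_n \mathcal{O}_n \subseteq \bigcup_n (\mathcal{X}_n \triangle \mathcal{O}_n)$, a meager difference. The main obstacle I anticipate is the separation lemma's sub-case $A \cap C \notin \mathcal{I}^+$ --- the sole step genuinely using the ideal structure of $\mathcal{I}$ --- together with the open-set case analysis, where one must carefully orchestrate which alternative of the lemma to invoke at each stage.
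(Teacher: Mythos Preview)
The paper gives no proof of this Fact; it is simply stated as easily verified, with attribution to Marczewski and Farah. Your outline handles most of the clauses correctly, and the separation lemma is a natural organizing principle. There is, however, a real gap in your argument that $\mathsf{Exp}(\mathcal{I}^+)$-open sets enjoy the \emph{abstract} $\mathsf{Exp}(\mathcal{I}^+)$-Baire property. When $a\cup A\in\mathcal{X}$ your analysis is fine: the witnessing basic set $[c_i,C_i]\ni a\cup A$ satisfies $A\subseteq^* C_i$, so $A\cap C_i\in\mathcal{I}^+$ and the inclusion alternative of your lemma applies. But when $a\cup A\notin\mathcal{X}$ you write ``the disjoint sub-case otherwise,'' and this does not suffice: your lemma produces a $[b,B]\subseteq[a,A]$ disjoint from \emph{one} basic $[c,C]$, not from the whole union $\mathcal{X}=\bigcup_i[c_i,C_i]$, and you give no device for handling all the pieces at once.

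In fact this clause of the Fact appears to fail for arbitrary ideals. Let $\mathcal{I}$ be the ideal of upper-density-zero subsets of $\omega$, let $E$ be the even integers and $O$ the odds, and set $\mathcal{X}=\{Y\in[\omega]^\omega: Y\cap E\text{ has upper density }0\}$. Then $\mathcal{X}$ is $\mathsf{Exp}(\mathcal{I}^+)$-open, since each $Y\in\mathcal{X}$ lies in $[\emptyset,(Y\cap E)\cup O]\subseteq\mathcal{X}$ with $(Y\cap E)\cup O\in\mathcal{I}^+$. Yet for the basic set $[\emptyset,E]$ there is no $[b,B]\subseteq[\emptyset,E]$ contained in $\mathcal{X}$ (the top element $b\cup B\subseteq E$ has positive density) nor disjoint from $\mathcal{X}$ (any infinite density-zero $D\subseteq B$ with $b<D$ yields $b\cup D\in[b,B]\cap\mathcal{X}$). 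Since your proof that the Baire-property sets are closed under complementation explicitly invokes the abstract Baire property of open sets, that argument inherits the same defect for general $\mathcal{I}$. (A minor aside: the ``without loss of generality $a\sqsubseteq c$'' in your separation lemma is not literally symmetric, because you need $[b,B]\subseteq[a,A]$ on a fixed side; the case $c\sqsubset a$ does go through, but via the same computation with $[a,A\cap C]$ or $[a,A\setminus C]$ rather than by swapping roles.)
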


The local Ramsey property relative to an ideal $\mathcal{I}$ on $\omega$ is defined in terms of the collection $\mathsf{Exp} (\mathcal{I}^{+})$ as follows (see \cite{Farah, Matet1, Mathias, Todorcevic(BookTopology), Todorcevic(BookRamsey)}):

\begin{definition} \label{Local_Ramsey_property}
	(\textit{Local Ramsey property}). Let $\mathcal{I}$ be an ideal on $\omega$. We say that a set $\mathcal{A} \subseteq [\omega]^{\omega}$ is \textit{$\mathcal{I}^{+}-$Ramsey} if for each $s\in [\omega]^{<\omega}$ and each $C\in \mathcal{I}^{+}$ there is some $B\in \mathcal{I}^{+} \!\restriction\! C$ such that either $[s,B] \subseteq \mathcal{A}$ or $[s,B] \cap \mathcal{A} = \emptyset$. If the second option always holds, then we say that $\mathcal{A}$ is \textit{$\mathcal{I}^{+}-$Ramsey null}. 
\end{definition}

It should be noted that for every ideal $\mathcal{I}$ on $\omega$, there are subsets of $[\omega]^{\omega}$ that do not satisfy the local Ramsey property relative to $\mathcal{I}$ (see \cite[Proposition 5.2]{Matet1}).

\begin{proposition}[] \label{MM}
	(Matet, \cite{Matet1}). Let $\mathcal{I}$ be an ideal on $\omega$. For all $s\in [\omega]^{<\omega}$ and $B\in \mathcal{I}^{+}$, there exists a set $\mathcal{A} \subseteq [s,B]$ such that $\mathcal{A}$ is not $\mathcal{I}^{+}-$Ramsey.
\end{proposition}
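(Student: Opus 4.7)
The plan is to produce $\mathcal{A}$ by a classical Erd\H{o}s--Rado style diagonalization inside $[s,B]$ using the axiom of choice, mimicking the construction that yields non--Ramsey sets in the unlocalized case. The witnesses to non--$\mathcal{I}^{+}$-Ramseyness will simply be the given pair $s$ and $B$; that is, I will ensure that for every $B'\in \mathcal{I}^{+}\!\restriction\! B$, both $[s,B']\cap \mathcal{A}\neq \emptyset$ and $[s,B']\setminus \mathcal{A}\neq \emptyset$.

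First I would record two cardinality facts. Since any $B'\in \mathcal{I}^{+}\!\restriction\! B$ is an infinite subset of $\omega$, the set $[s,B']=\{X\in [\omega]^{\omega}:s\sqsubset X\subseteq s\cup B'\}$ has cardinality $2^{\aleph_{0}}=\mathfrak{c}$. On the other hand $\mathcal{I}^{+}\!\restriction\! B\subseteq \wp(\omega)$, so $|\mathcal{I}^{+}\!\restriction\! B|\leq \mathfrak{c}$, and we may fix an enumeration $\mathcal{I}^{+}\!\restriction\! B=\{B_{\alpha}:\alpha<\kappa\}$ with $\kappa\leq \mathfrak{c}$.

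Next I would perform a transfinite recursion of length $\kappa$. At stage $\alpha<\kappa$, suppose the distinct points $\{X_{\beta},Y_{\beta}:\beta<\alpha\}\subseteq [s,B]$ have been chosen. Because $|[s,B_{\alpha}]|=\mathfrak{c}$ while $|\{X_{\beta},Y_{\beta}:\beta<\alpha\}|\leq 2|\alpha|<\mathfrak{c}$, I can pick two distinct elements $X_{\alpha},Y_{\alpha}\in [s,B_{\alpha}]$ with $X_{\alpha},Y_{\alpha}\notin \{X_{\beta},Y_{\beta}:\beta<\alpha\}$. After the recursion completes, set
\[
\mathcal{A}=\{X_{\alpha}:\alpha<\kappa\}.
\]
Since each $X_{\alpha}\in [s,B_{\alpha}]\subseteq [s,B]$, we have $\mathcal{A}\subseteq [s,B]$ as required. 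By construction the families $\{X_{\alpha}\}$ and $\{Y_{\alpha}\}$ are disjoint, so each $Y_{\alpha}\notin \mathcal{A}$.

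Finally, to verify that $\mathcal{A}$ is not $\mathcal{I}^{+}$-Ramsey, I would test the defining condition with the given $s$ and with $C=B$. For any $B'\in \mathcal{I}^{+}\!\restriction\! B$ there is some $\alpha<\kappa$ with $B'=B_{\alpha}$, and then $X_{\alpha}\in [s,B']\cap \mathcal{A}$ while $Y_{\alpha}\in [s,B']\setminus \mathcal{A}$, so neither $[s,B']\subseteq \mathcal{A}$ nor $[s,B']\cap \mathcal{A}=\emptyset$ can hold. The only nontrivial step is the counting argument guaranteeing that the recursion never runs out of fresh points, which rests on $2|\alpha|<\mathfrak{c}$ at every stage; this is where the use of the axiom of choice and the cardinality of the continuum is essential, and it is the only real obstacle in the argument.
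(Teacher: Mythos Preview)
The paper does not supply its own proof of this proposition; it merely records the statement with attribution to Mathias and Matet. Your argument is correct and is exactly the standard Bernstein--type diagonalization (the Erd\H{o}s--Rado construction the paper alludes to in the Introduction) localized to $[s,B]$: enumerate $\mathcal{I}^{+}\!\restriction\! B$ in type $\kappa\leq\mathfrak{c}$, and at stage $\alpha$ use $|[s,B_{\alpha}]|=\mathfrak{c}>2|\alpha|$ to select fresh witnesses $X_{\alpha}\in\mathcal{A}$ and $Y_{\alpha}\notin\mathcal{A}$ inside $[s,B_{\alpha}]$. The only points worth making explicit are that $B_{\alpha}/s$ is infinite (so $|[s,B_{\alpha}]|=\mathfrak{c}$ indeed), and that the choice at stage $\gamma>\alpha$ avoids $Y_{\alpha}$, which is what guarantees $\{X_{\alpha}\}_{\alpha}\cap\{Y_{\alpha}\}_{\alpha}=\emptyset$; you have both of these covered.
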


We note the following property of $\mathcal{I}^{+}-$Ramsey null sets with respect to a semiselective ideal $\mathcal{I}$, which will be used in the proof of Proposition \ref{Additivity}.

\begin{lemma} \label{lemma_Ramsey_null}
	Let $\mathcal{I}$ be a semiselective ideal on $\omega$. If $\mathcal{C} \subseteq [\omega]^{\omega}$ is a $\mathcal{I}^{+}-$Ramsey null set, then for every $X\in \mathcal{I}^{+}$ there is some $Y \in \mathcal{I}^{+} \!\restriction\! X$ such that $\{ Z\in [\omega]^{\omega} : Z\subseteq^{*}Y \} \cap \mathcal{C} = \emptyset$. 
\end{lemma}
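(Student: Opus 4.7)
The plan is to apply semiselectivity (Definition \ref{Def_Semiselective_Ideal}) to a sequence of dense--open sets designed to package the $\mathcal{I}^{+}-$Ramsey nullness of $\mathcal{C}$ relative to all short finite initial segments simultaneously, and then to verify that the resulting diagonalization avoids $\mathcal{C}$ even for initial segments $s \not\subseteq Y$.

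For each $n \in \omega$, let
\[
\mathcal{D}_{n} = \left\{ A \in \mathcal{I}^{+} : [s,A] \cap \mathcal{C} = \emptyset \text{ for every } s \subseteq \{0,1,\ldots,n\} \right\}.
\]
Openness of $\mathcal{D}_{n}$ on $(\mathcal{I}^{+},\subseteq)$ is immediate from $[s,A'] \subseteq [s,A]$ whenever $A' \subseteq A$. Density follows by starting with any $C \in \mathcal{I}^{+}$ and iteratively shrinking it using $\mathcal{I}^{+}-$Ramsey nullness of $\mathcal{C}$ at each of the finitely many $s \subseteq \{0,\ldots,n\}$. Given $X \in \mathcal{I}^{+}$, semiselectivity applied to $\{\mathcal{D}_{n}\}_{n \in \omega}$ and $X$ yields $Y \in \mathcal{I}^{+} \!\restriction\! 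X$ such that $Y/m \in \mathcal{D}_{m}$ for every $m \in Y$.

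I then claim that $[s,Y] \cap \mathcal{C} = \emptyset$ for every $s \in [\omega]^{<\omega}$; this suffices because each $Z \in [\omega]^{\omega}$ with $Z \subseteq^{*} Y$ lies in $[Z \cap (n+1), Y]$ for any $n$ large enough that $Z \setminus (n+1) \subseteq Y$. To prove the claim, fix $s \in [\omega]^{<\omega}$ and $B \in [s,Y]$, and let $m^{*} = \min\{m \in Y : m > \max s\}$ (under the convention $\max \emptyset = -1$). Since $m^{*}$ is the least element of $Y/s$, one has $Y/s = \{m^{*}\} \cup Y/m^{*}$, and since $\max s < m^{*}$ both $s$ and $s \cup \{m^{*}\}$ are subsets of $\{0,\ldots,m^{*}\}$; hence $Y/m^{*} \in \mathcal{D}_{m^{*}}$ gives both $[s, Y/m^{*}] \cap \mathcal{C} = \emptyset$ and $[s \cup \{m^{*}\}, Y/m^{*}] \cap \mathcal{C} = \emptyset$. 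If $m^{*} \notin B/s$, then $B/s \subseteq Y/m^{*}$, so $B \in [s, Y/m^{*}]$; otherwise $m^{*} \in B/s$ is its minimum, so $s \cup \{m^{*}\} \sqsubset B$ and $B \in [s \cup \{m^{*}\}, Y/m^{*}]$. Either way, $B \notin \mathcal{C}$.

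The main subtlety is that a direct application of semiselectivity to the more natural dense--open sets $\{A \in \mathcal{I}^{+} : [s,A] \cap \mathcal{C} = \emptyset\}$ indexed by $s \in [\omega]^{<\omega}$ only delivers $[s,Y] \cap \mathcal{C} = \emptyset$ for $s \in [Y]^{<\omega}$, which misses exactly the initial segments $s \not\subseteq Y$ needed to cover those $Z \subseteq^{*} Y$ with $Z \setminus Y$ nonempty. Packaging all $s \subseteq \{0,\ldots,n\}$ into a single $\mathcal{D}_{n}$, together with the case split on whether $m^{*} \in B/s$, is designed precisely to bridge this gap.
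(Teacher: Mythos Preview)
Your proof is correct and follows essentially the same approach as the paper: both define the dense--open sets $\mathcal{D}_{n} = \{A \in \mathcal{I}^{+} : (\forall\, s \subseteq \{0,\ldots,n\})\,[s,A]\cap\mathcal{C}=\emptyset\}$, diagonalize via semiselectivity, and then verify that every $Z \subseteq^{*} Y$ lands in some $[s,Y/m]$ with $m \in Y$ and $s \subseteq \{0,\ldots,m\}$. The only cosmetic difference is that the paper, given $Z \subseteq^{*} Y$, directly picks $n \in Y$ with $Z/n \subseteq Y$ and uses the longest initial segment $s \sqsubset Z$ with $s < n+1$, thereby avoiding your case split on whether $m^{*} \in B/s$; your route via the stronger intermediate claim $[s,Y]\cap\mathcal{C}=\emptyset$ for all $s$ is a bit more elaborate but equally valid.
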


\begin{proof}
	Fix a semiselective ideal $\mathcal{I}$ on $\omega$, and let $\mathcal{C} \subseteq [\omega]^{\omega}$ be a $\mathcal{I}^{+}-$Ramsey null set. Given any $X\in \mathcal{I}^{+}$, we consider the sequence $\{\mathcal{D}_{n}\}_{n\in \omega}$ of dense--open sets in $(\mathcal{I}^{+} \!\restriction\! X, \subseteq)$ defined as follows:
    \begin{center}
		$\mathcal{D}_{n} = \{ N \in \mathcal{I}^{+} \!\restriction\! X : (\forall\, s<n+1)( [s,N]\cap \mathcal{C}=\emptyset ) \}$.
    \end{center}
	Since the ideal $\mathcal{I}$ is semiselective, then we can find an infinite set $Y\in \mathcal{I}^{+} \!\restriction\! X$ such that $Y/n \in \mathcal{D}_{n}$ for all $n\in Y$. Now, for every infinite set $Z \subseteq^{*} Y$ there is $n\in Y$ such that $Z/n \subseteq Y$; so, if we take the longest initial segment $s \sqsubset Z$ such that $s < n+1$, then $Z\in [s,Y/n]$ and hence $Z\notin \mathcal{C}$. Thus, we conclude that $\{ Z\in [\omega]^{\omega} : Z\subseteq^{*}Y \} \cap \mathcal{C} = \emptyset$. 
\end{proof}

\smallskip

The connection between the local Ramsey property and semiselective ideals was extensively studied by Farah and Todorcevic in \cite{Farah, Todorcevic(BookRamsey)}. The following result gives the most important characterization of semiselective ideals (see \cite[Theorem 3.1]{Farah} and \cite[Theorem 7.19]{Todorcevic(BookRamsey)}).

\begin{theorem} \label{Local_Ellentuck_Theorem}
	[\textit{Local Ellentuck Theorem}]. (Farah, \cite{Farah} $\slash$ Todorcevic, \cite{Todorcevic(BookRamsey)}). Let $\mathcal{I}$ be an ideal on $\omega$. Then, the following statements are equivalent: 
	\setlist{nolistsep}
	\begin{enumerate}
		\setlength{\itemsep}{0pt}
		\item[(A)] The ideal $\mathcal{I}$ is semiselective.
		\item[(B)] The following three families of subsets of $[\omega]^{\omega}$ form $\sigma-$ideals and coincide:
		\begin{enumerate}
			\setlength{\itemsep}{0pt}
			\item[(a)] The collection of all $\mathcal{I}^{+}-$Ramsey null sets.
			\item[(b)] The collection of all $\mathsf{Exp}(\mathcal{I}^{+})-$nowhere dense sets.
			\item[(c)] The collection of all $\mathsf{Exp}(\mathcal{I}^{+})-$meager sets.
		\end{enumerate}
		And the following three families of subsets of $[\omega]^{\omega}$ form $\sigma-$algebras and coincide:
		\begin{enumerate}
			\setlength{\itemsep}{0pt}
			\item[(a)] The collection of all $\mathcal{I}^{+}-$Ramsey sets.
			\item[(b)] The collection of all sets with the abstract $\mathsf{Exp}(\mathcal{I}^{+})-$Baire property.
			\item[(c)] The collection of all sets with the $\mathsf{Exp}(\mathcal{I}^{+})-$Baire property.
		\end{enumerate}
	\end{enumerate}
\end{theorem}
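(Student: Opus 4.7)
My plan is to concentrate on the forward direction (A) $\Rightarrow$ (B), since the converse can be extracted by applying the local dichotomy in (b) to $\mathsf{Exp}(\mathcal{I}^{+})$-open sets engineered to code an arbitrary sequence $\{\mathcal{D}_{n}\}_{n\in\omega}$ of dense--open subsets of $\mathcal{I}^{+}$, or a given partition $A=\bigcup_{k}f_{k}$ into finite blocks, and then reading off properties $(p^{w})$ and $(q)$ of Proposition \ref{semiselective=(p^w)+(q^+)}.

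For (A) $\Rightarrow$ (B), the central Matet-style combinatorial lemma I would establish is the following: if $\mathcal{I}$ is semiselective, then for every $s\in[\omega]^{<\omega}$, every $A\in\mathcal{I}^{+}$, and every $\mathsf{Exp}(\mathcal{I}^{+})$-open set $\mathcal{O}$, there is $B\in\mathcal{I}^{+}\!\restriction\! A$ with $[s,B]\subseteq\mathcal{O}$ or $[s,B]\cap\mathcal{O}=\emptyset$. The plan is to consider, for each $t\in[\omega]^{<\omega}$ extending $s$, the dense--open sets $\mathcal{E}_{t}^{\mathcal{O}}\subseteq\mathcal{I}^{+}$ from the example preceding Definition \ref{Def_selective-ideal}, and to apply the $[\omega]^{<\omega}$-indexed form of semiselectivity in Definition \ref{Def_Semiselective_Ideal} to extract $B\in\mathcal{I}^{+}\!\restriction\! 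A$ with $B/t\in\mathcal{E}_{t}^{\mathcal{O}}$ for every $t\in[B]^{<\omega}$. A fusion argument along the tree of finite extensions of $s$ inside $B$, using that $\mathcal{O}$ is a union of basic cylinders, then forces a uniform decision at $s$. Once this dichotomy is in hand, the coincidence of $\mathsf{Exp}(\mathcal{I}^{+})$-nowhere dense sets with $\mathcal{I}^{+}$-Ramsey null sets is immediate: one inclusion is obvious from the definitions, and the other comes from applying the dichotomy to the $\mathsf{Exp}(\mathcal{I}^{+})$-open set formed as the union of all basic cells disjoint from the given nowhere dense set.

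It then remains to show that the $\mathcal{I}^{+}$-Ramsey null sets form a $\sigma$-ideal, for by Fact \ref{facts_abstract_topology} they will then coincide with the $\mathsf{Exp}(\mathcal{I}^{+})$-meager sets. Here I would invoke Lemma \ref{lemma_Ramsey_null} together with property $(p^{w})$ of Proposition \ref{semiselective=(p^w)+(q^+)}: given Ramsey null sets $\{\mathcal{X}_{n}\}_{n\in\omega}$ and a cell $[s,C]$, recursively extract $Y_{n}\in\mathcal{I}^{+}\!\restriction\! C$ so that no set almost contained in $Y_{n}$ meets $\mathcal{X}_{n}$, use $(p^{w})$ to obtain $Y\in\mathcal{I}^{+}$ with $Y\subseteq^{*}Y_{n}$ for every $n$, and refine to some $Y'\in\mathcal{I}^{+}\!\restriction\! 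Y$ with $[s,Y']\cap\bigcup_{n}\mathcal{X}_{n}=\emptyset$. The $\sigma$-algebra statement then follows in routine fashion: any set with the $\mathsf{Exp}(\mathcal{I}^{+})$-Baire property is the symmetric difference of an $\mathsf{Exp}(\mathcal{I}^{+})$-open set and a set now known to be Ramsey null, and the open-set dichotomy combined with Lemma \ref{lemma_Ramsey_null} converts this symmetric difference decomposition into the local Ramsey property for the original set.

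The hardest step is the open-set dichotomy itself, since an $\mathsf{Exp}(\mathcal{I}^{+})$-open set is typically an uncountable union of basic cylinders that must be decided simultaneously along the diagonalization. The payoff of the Matet-style approach is that the tree of stems extending $s$ is naturally indexed by $[\omega]^{<\omega}$, which is precisely the indexing offered by the second formulation of semiselectivity; this allows a single-pass diagonalization and avoids the combinatorial forcing used by Farah and Todorcevic.
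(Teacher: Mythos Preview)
Your architecture differs from the paper's. The paper proves a single combinatorial lemma (Lemma~\ref{Semiselective_Matet_Lemma}) about arbitrary sequences $\{\mathcal{Q}_i\}_{i\in\omega}$---if $[a,B]\not\subseteq\bigcap_i\mathcal{Q}_i$ for every $B\in\mathcal{I}^+\!\restriction\!A$, then some $[c,C]\subseteq[a,A]$ has $[d,D]\not\subseteq\mathcal{Q}_i$ for every sub-cell $[d,D]$ with $D\in\mathcal{I}^+$---and derives Propositions~\ref{Semiselective-Ellentuck-meager} and~\ref{Semiselective-Ellentuck-Baire} from it, the latter by applying the lemma \emph{twice in the contrapositive} (not-Ramsey $\Rightarrow$ not-abstract-Baire). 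You instead aim for an open-set dichotomy first, then build up. Your $\sigma$-ideal argument via Lemma~\ref{lemma_Ramsey_null} and $(p^w)$ is a genuine and clean alternative to the paper's route.

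The gap is in the open-set dichotomy itself. The ``fusion argument'' you wave at is precisely where the work lives. A single diagonalization with the $\mathcal{E}_t^{\mathcal{O}}$ only decides, for each $t=s\cup b$ with $b\in[B]^{<\omega}$, whether $\mathcal{I}^+\!\restriction\!(B/t)$ lies inside or is disjoint from $\mathcal{S}_t^{\mathcal{O}}$; turning these local decisions into a global one at $s$ requires a \emph{second} diagonalization (this is exactly the $T_b$, $\mathcal{K}_b$ machinery in the proof of Lemma~\ref{Semiselective_Matet_Lemma}), and your sketch gives no indication of it. More seriously, even after that second pass the natural conclusion is only the weak one: no sub-cell $[d,D]\subseteq[s,C]$ with $D\in\mathcal{I}^+$ is contained in $\mathcal{O}$. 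That suffices for your step from nowhere dense to Ramsey null, but not for your Baire-to-Ramsey step via $\mathcal{A}=\mathcal{O}\mathbin{\triangle}\mathcal{M}$, where you explicitly need $[s,B]\cap\mathcal{O}=\emptyset$ in the negative case. The obstruction is that an $E\in[s,C]\cap\mathcal{O}$ with $E\in\mathcal{I}$ is not excluded by the sub-cell condition: if $E\in[u,U]\subseteq\mathcal{O}$ with $U\in\mathcal{I}^+$, there is no reason for $U\cap C$ to lie in $\mathcal{I}^+$, since $\mathsf{Exp}(\mathcal{I}^+)$ is not closed under intersections. The paper never needs the full dichotomy: by working contrapositively and applying Lemma~\ref{Semiselective_Matet_Lemma} once to $\mathcal{A}$ and once to $\mathcal{A}^{\complement}$, it produces a cell with no homogeneous sub-cell in either direction, contradicting the abstract Baire property directly. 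You should either supply a separate argument ruling out such $E\in\mathcal{I}$, or replace your open-set dichotomy by the weaker Lemma~\ref{Semiselective_Matet_Lemma} and follow the paper's contrapositive route for the Baire step.
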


\begin{proof}	
	(A) implies (B) follows from Proposition \ref{Semiselective-Ellentuck-meager}, Proposition \ref{Semiselective-Ellentuck-Baire},  and Colollary \ref{sigma_ideal/sigma_algebra} of the next section. (B) implies (A) follows from Proposition \ref{semiselective=(p^w)+(q^+)} (see \cite[Lemma 7.5 and Lemma 7.8]{Todorcevic(BookRamsey)}).
\end{proof}

\section{A new proof of the local Ellentuck theorem}\label{newproof}

Motivated by \cite[Lemma 1]{Matet2}, we propose in this section an alternative proof of Theorem \ref{Local_Ellentuck_Theorem}, in which we do not explicitly use the local notion of combinatorial forcing applied in \cite[Section 2]{Farah} and \cite[Section 7.1]{Todorcevic(BookRamsey)}.

\begin{lemma}[] \label{Semiselective_Matet_Lemma}
	Let $\mathcal{I}$ be an ideal on $\omega$. Then, the following statements are equivalent:
	\setlist{nolistsep}	
	\begin{enumerate}
		\setlength{\itemsep}{0pt}
		\item[(a)] The ideal $\mathcal{I}$ is semiselective.
		\item[(b)] For every $a\in [\omega]^{<\omega}$ and every $A\in \mathcal{I}^{+}$, if $\{\mathcal{Q}_{i}\}_{i\in\omega} \subseteq \wp([\omega]^{\omega})$ is any sequence of subsets of $[\omega]^{\omega}$ for which $[a,B] \not\subseteq \bigcap_{i\in\omega} \mathcal{Q}_{i}$ for all $B\in \mathcal{I^{+}} \!\restriction\! A$, then there are $i\in \omega$, $c\in [\omega]^{<\omega}$, and $C\in \mathcal{I}^{+} \!\restriction\! A$, with $|a|\leq |c| \leq |a|+i$ and $[c,C] \subseteq [a,A]$, such that  $[d,D] \not\subseteq \mathcal{Q}_{i}$  for each $d\in [\omega]^{<\omega}$ and each $D\in \mathcal{I^{+}} \!\restriction\! C$ satisfying  $[d,D] \subseteq [c,C]$.	
	\end{enumerate}
\end{lemma}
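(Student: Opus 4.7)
The plan is to establish both implications, the forward direction (a)$\Rightarrow$(b) being the substantive half; the reverse will follow from the characterization of semiselectivity via $(p^{w})$ and $(q)$ in Proposition \ref{semiselective=(p^w)+(q^+)} applied to carefully chosen sequences $\{\mathcal{Q}_{i}\}_{i\in\omega}$.

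For (a)$\Rightarrow$(b), fix $a$, $A$ and $\{\mathcal{Q}_{i}\}_{i\in\omega}$ satisfying the hypothesis of (b), and argue by contradiction. Assume that for every $i\in\omega$, every $c\in[\omega]^{<\omega}$ with $a\sqsubseteq c$ and $|c|\leq|a|+i$, and every $C\in\mathcal{I}^{+}\!\restriction\! A$ with $[c,C]\subseteq[a,A]$, there exist $d\sqsupseteq c$ and $D\in\mathcal{I}^{+}\!\restriction\! C$ with $[d,D]\subseteq[c,C]$ and $[d,D]\subseteq\mathcal{Q}_{i}$. For each $t\in[\omega]^{<\omega}$ with $\max a<\min t$ (setting $\mathcal{D}_{t}=\mathcal{I}^{+}$ trivially for other $t$), define
\begin{equation*}
\mathcal{D}_{t} = \{\,N\in\mathcal{I}^{+} : [a\cup t, N]\subseteq \mathcal{Q}_{|t|}\,\}.
\end{equation*}
Openness of $\mathcal{D}_{t}$ on $(\mathcal{I}^{+},\subseteq)$ is immediate, while density is to be extracted from an inductive fusion that iterates the negated conclusion along extensions of $a\cup t$ inside any given $C$. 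Invoking the strong form of Definition \ref{Def_Semiselective_Ideal}, obtain $B\in\mathcal{I}^{+}\!\restriction\! A$ with $B/t\in\mathcal{D}_{t}$ for every $t\in[B]^{<\omega}$. Any $F\in[a,B]$ has, for each $i\in\omega$, an initial $i$-element segment $t\subseteq F\setminus a$ lying in $[B]^{<\omega}$; hence $F\in[a\cup t, B/t]\subseteq\mathcal{Q}_{i}$. This gives $[a,B]\subseteq\bigcap_{i}\mathcal{Q}_{i}$, contradicting the hypothesis of (b).

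For (b)$\Rightarrow$(a), by Proposition \ref{semiselective=(p^w)+(q^+)} it suffices to verify $(p^{w})$ and $(q)$. For $(p^{w})$, given dense--open $\{\mathcal{D}_{n}\}_{n\in\omega}$ on $(\mathcal{I}^{+},\subseteq)$ and $A\in\mathcal{I}^{+}$, set $\mathcal{Q}_{n}=\{\,C\in[\omega]^{\omega} : \exists A_{n}\in\mathcal{D}_{n}\text{ with }C\subseteq^{*}A_{n}\,\}$ and apply (b) with $a=\emptyset$. If the hypothesis of (b) held, the conclusion would yield $i,c,C$ with $[d,D]\not\subseteq\mathcal{Q}_{i}$ for every $[d,D]\subseteq[c,C]$; but density of $\mathcal{D}_{i}$ below $C$ produces $A_{i}\in\mathcal{D}_{i}$ with $A_{i}\subseteq C$, and then $[d, A_{i}/d]\subseteq\mathcal{Q}_{i}$ for suitable $d\sqsupseteq c$, a contradiction. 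Hence the hypothesis fails, giving $B\in\mathcal{I}^{+}\!\restriction\! A$ with $[\emptyset, B]\subseteq\bigcap_{n}\mathcal{Q}_{n}$, so $B\subseteq^{*}A_{n}$ for some $A_{n}\in\mathcal{D}_{n}$ for every $n$. For $(q)$, apply (b) with $\mathcal{Q}_{i}=\{\,C\in[\omega]^{\omega} : \text{any two among the first }i+1\text{ elements of }C\text{ lie in distinct pieces }f_{k}\,\}$ and argue analogously.

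The main obstacle is proving density of $\mathcal{D}_{t}$ in direction (a)$\Rightarrow$(b): the negated conclusion directly provides only a single $[d,D]\subseteq\mathcal{Q}_{i}$ inside any $[c,C]$, whereas density demands an $N$ for which the whole block $[a\cup t, N]$ lies in $\mathcal{Q}_{|t|}$. Bridging this gap requires an inductive construction exploiting the length bound $|c|\leq|a|+i$, which lets us re-apply the negated conclusion at all intermediate extensions of $a$ while keeping the index $i$ suitably adjusted. The interplay between the natural indexing of pairs $(i,c)$ by $c\setminus a$ and the indexing of the semiselectivity diagonalization by $t\in[\omega]^{<\omega}$ is where the bookkeeping work will concentrate.
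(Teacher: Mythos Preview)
The forward direction (a)$\Rightarrow$(b) has a genuine gap: your sets $\mathcal{D}_{t}=\{N\in\mathcal{I}^{+}:[a\cup t,N]\subseteq\mathcal{Q}_{|t|}\}$ need not be dense, even under the full negation of the conclusion of (b). Applying the negated conclusion at $c=a\cup t$ with $i=|t|$ yields some $[d,D]\subseteq\mathcal{Q}_{|t|}$ with $d\sqsupseteq a\cup t$, but $d$ may properly extend $a\cup t$, leaving $[a\cup t,D]$ uncontrolled. To handle the remaining one-point extensions $a\cup t\cup\{m\}$ you would have to re-apply the negated conclusion at $c'=a\cup t\cup\{m\}$; but $|c'|=|a|+|t|+1$ forces $i'\ge|t|+1$, so you land in $\mathcal{Q}_{i'}$ rather than $\mathcal{Q}_{|t|}$. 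No fusion can therefore assemble an $N$ with $[a\cup t,N]\subseteq\mathcal{Q}_{|t|}$, and the ``index adjustment'' you allude to destroys exactly the inclusion needed for the contradiction $[a,B]\subseteq\bigcap_{i}\mathcal{Q}_{i}$. The paper proceeds directly (not by contradiction) with two diagonalizations: the first uses dense--open sets of the deciding form $\{N:\mathcal{I}^{+}\!\restriction\! N\subseteq\mathcal{S}_{a\cup d}^{\mathcal{Q}_{i}}\ \veebar\ \mathcal{I}^{+}\!\restriction\! N\cap\mathcal{S}_{a\cup d}^{\mathcal{Q}_{i}}=\emptyset\}$, whose density is automatic, to produce $B$; from $[a,B]\not\subseteq\bigcap_{i}\mathcal{Q}_{i}$ a specific $i$ and $c$ are read off; a second diagonalization then refines to $C\subseteq B$ by avoiding, for each $b$, the $\mathcal{I}$-small set $T_{b}$ of bad one-point extensions, after which an induction on $|b|$ shows $\mathcal{K}_{b}=\emptyset$ for every $b\in[C]^{<\omega}$. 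The paper never needs a set of the form $\{N:[a\cup t,N]\subseteq\mathcal{Q}_{k}\}$ to be dense.

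A smaller point on (b)$\Rightarrow$(a): for $(q)$, the conclusion of (b) may hand you a $c$ already containing two elements of some piece $f_{k}$, and then no $[d,D]\subseteq[c,C]$ lies in your $\mathcal{Q}_{i}$, so no contradiction arises. The paper instead takes $\mathcal{Q}_{i}=\{X:(\exists\,j\ge i)(|f_{j}\cap X|\ge2)\}$, for which one can always extend $c$ by two points of a fresh piece $f_{j}$ with $j\ge i$ and $c<\min f_{j}$.
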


\begin{proof}
	$[\text{(a)} \Longrightarrow \text{(b)}].$ Let $\mathcal{I}$ be a semiselective ideal  on $\omega$ and let $\{\mathcal{Q}_{i}\}_{i\in\omega}$ be a sequence of subsets of $[\omega]^{\omega}$ such that $[a,B] \not\subseteq \bigcap_{i\in\omega} \mathcal{Q}_{i}$ for all $B\in \mathcal{I}^{+} \!\restriction\! A$, where $a\in [\omega]^{<\omega}$ and $A\in \mathcal{I}^{+}$ are fixed. We can assume without loss of generality that $0\notin A$ and $a< \min A$.

    \medskip
	
	For each $i\in\omega$ and each $d\in[\omega]^{<\omega}$ consider the set $\mathcal{S}_{a\cup d}^{\mathcal{Q}_{i}} = \{ D\in [\omega]^{\omega} : [a\cup d, D] \subseteq \mathcal{Q}_{i} \}$, and consider also the sequence $\{\mathcal{D}_{j}\}_{j\in \omega}$ of subsets of $\mathcal{I}^{+} \!\restriction\! A$ defined as follows:
    \begin{center}
		$\mathcal{D}_{j} = \left\{ N\in \mathcal{I}^{+} \!\restriction\! A  :  (\forall \, i\leq j ) (\forall \, d\subseteq \{0,\ldots,j\}) ( \mathcal{I}^{+} \!\restriction\! N \subseteq \mathcal{S}_{a\cup d}^{\mathcal{Q}_{i}} \, \vee \, \mathcal{I}^{+} \!\restriction\! N \cap \mathcal{S}_{a\cup d}^{\mathcal{Q}_{i}} = \emptyset )  \right\}$.
    \end{center}
	
	The family $\{\mathcal{D}_{j}\}_{j\in\omega} \subseteq \wp(\mathcal{I}^{+} \!\restriction\! A)$ is a sequence of dense--open sets in $(\mathcal{I}^{+} \!\restriction\! A, \subseteq)$. Indeed, for each $j\in \omega$ we take the finite set $P_{j} = \{(i,d) \in \omega \times [\omega]^{<\omega} : i\leq j \, \wedge \, d\subseteq\{0,\ldots,j\} \}$, then we have that $\mathcal{D}_{j} = [A]^{\omega} \cap \bigcap_{(i,d)\in P_{j}} \mathcal{E}_{a\cup d}^{\mathcal{Q}_{i}}$, where $\mathcal{E}_{a\cup d}^{\mathcal{Q}_{i}}$ is the dense--open set in $(\mathcal{I}^{+}, \subseteq)$ given by $\mathcal{E}_{a\cup d}^{\mathcal{Q}_{i}} = \{ N\in\mathcal{I}^{+} : \mathcal{I}^{+} \!\restriction\! N \subseteq \mathcal{S}_{a\cup d}^{\mathcal{Q}_{i}} \, \vee \, \mathcal{I}^{+} \!\restriction\! N \cap \mathcal{S}_{a\cup d}^{\mathcal{Q}_{i}} = \emptyset \}$.

    \medskip
	
	As $\mathcal{I}$ is a semiselective ideal on $\omega$, then there exists a diagonalization $B\in \mathcal{I}^{+} \!\restriction\! A$ for the sequence $\{\mathcal{D}_{j}\}_{j\in\omega}$; thus, if $B=\{n_{k} : k\in \omega\}$ is the increasing enumeration of $B$, then $B/n_{k} \in \mathcal{D}_{n_{k}}$ for all $k\in \omega$, so that for each $i\leq n_{k}$ and each $d\subseteq \{0,\ldots,n_{k}\}$  either $\mathcal{I}^{+} \!\restriction\! (B/n_{k}) \subseteq \mathcal{S}_{a\cup d}^{\mathcal{Q}_{i}}$ or $\mathcal{I}^{+} \!\restriction\! (B/n_{k}) \cap \mathcal{S}_{a\cup d}^{\mathcal{Q}_{i}} = \emptyset$. Furthermore, to streamline the argument, it is convenient to set $n_{-1} = 0$, in which case $B/n_{-1} = B$.
    
    \medskip
    
	Since $B$ belongs to $\mathcal{I}^{+} \!\restriction\! A$, then $[a,B] \not\subseteq \bigcap_{i\in\omega} \mathcal{Q}_{i}$, so there is some $i\in \omega$ such that $[a,B] \not\subseteq \mathcal{Q}_{i}$, and hence $[a,B] \setminus \mathcal{Q}_{i} \neq \emptyset$; thus, we take $E\in [a,B] \setminus \mathcal{Q}_{i}$ and consider $c\in [\omega]^{<\omega}$ given by $c=a\cup (\{n_{j} : j<i\} \cap E)$. So, notice that $|a|\leq |c| \leq |a|+i$, and also that $a \sqsubseteq c \sqsubset E$ and $E/c \in [B/n_{i-1}]^{\omega}$, then $[c,B/n_{i-1}] \not\subseteq \mathcal{Q}_{i}$ because $E\in [c,B/n_{i-1}] \setminus \mathcal{Q}_{i}$. 

    \medskip
	
	Now, taking into account that $B/n_{i-1} \in \mathcal{I}^{+} \!\restriction\! A$, we define the sets $T_{b} \subseteq B/n_{i-1}$ and $\mathcal{K}_{b} \subseteq \mathcal{I}^{+} \!\restriction\! (B/n_{i-1})$ for every $b\in [B/n_{i-1}]^{<\omega}$ as follows:
	\begin{gather*}
		T_{b} = \{m\in B/n_{i-1} : c\cup b < m \,\wedge\, [c\cup b\cup \{m\}, B/n_{i-1}] \subseteq \mathcal{Q}_{i} \} \text{, and} 
		\\
		\mathcal{K}_{b} = \{D\in \mathcal{I}^{+} \!\restriction\! (B/n_{i-1}) : [c\cup b, D] \subseteq \mathcal{Q}_{i} \} = \mathcal{S}_{c\cup b}^{\mathcal{Q}_{i}} \cap \mathcal{I}^{+} \!\restriction\! (B/n_{i-1}).
	\end{gather*}

	\textit{Claim 1.} If $b\in [B/n_{i-1}]^{<\omega}$ and $D_{1}, D_{2} \in \mathcal{I}^{+} \!\restriction\! (B/n_{i-1})$ be such that $D_{1} \in \mathcal{K}_{b}$ and $D_{2} \subseteq D_{1}$ then $D_{2} \in \mathcal{K}_{b}$.

    \smallskip
    
	Indeed, we have $[c\cup b, D_{2}] \subseteq [c\cup b, D_{1}] \subseteq \mathcal{Q}_{i}$ whenever $D_{1} \in \mathcal{K}_{b}$ and $D_{2} \subseteq D_{1}$, hence $D_{2} \in \mathcal{K}_{b}$. 

    \medskip
	
	\textit{Claim 2.} For all $b\in [B/n_{i-1}]^{<\omega}$ it is true that either $\mathcal{K}_{b} = \mathcal{I}^{+} \!\restriction\! (B/n_{i-1})$ or $\mathcal{K}_{b}=\emptyset$. 

    \smallskip
    
	Indeed, suppose that $\mathcal{K}_{b} \neq \emptyset$ and let $D\in \mathcal{I}^{+} \!\restriction\! (B/n_{i-1})$ be such that $D\in \mathcal{K}_{b}$, hence $D/b \in \mathcal{K}_{b} \subseteq \mathcal{S}_{c\cup b}^{\mathcal{Q}_{i}}$. Now, we consider $p\in \omega$ such that $n_{p}= \min \{n_{j}\in B/n_{i-1} : c\cup b< n_{j}\}$, so that $[c\cup b, B/n_{p-1}] = [c\cup b, B/n_{i-1}]$ and $D/b=D/n_{p-1}$, hence $D/b \in \mathcal{I}^{+} \!\restriction\! (B/n_{p-1})$; moreover, since $i-1<n_{i-1}\leq n_{p-1}$ and $a \sqsubseteq c\cup b < n_{p}$, we infer that $i\leq n_{p-1}$ and $a \subseteq c\cup b \subseteq \{0,\ldots, n_{p-1}\}$. Therefore, by virtue of the fact that $B/n_{p-1} \in \mathcal{D}_{n_{p-1}}$, then either $\mathcal{I}^{+} \!\restriction\! (B/n_{p-1}) \subseteq \mathcal{S}_{c\cup b}^{\mathcal{Q}_{i}}$ or $\mathcal{I}^{+} \!\restriction\! (B/n_{p-1}) \cap \mathcal{S}_{c\cup b}^{\mathcal{Q}_{i}} = \emptyset$; however, $D/b\in \mathcal{I}^{+} \!\restriction\! (B/n_{p-1}) \cap \mathcal{S}_{c\cup b}^{\mathcal{Q}_{i}}$, so it must necessarily hold that $\mathcal{I}^{+} \!\restriction\! (B/n_{p-1}) \subseteq \mathcal{S}_{c\cup b}^{\mathcal{Q}_{i}}$, consequently $[c\cup b, B/n_{p-1}] \subseteq \mathcal{Q}_{i}$, so that $[c\cup b, B/n_{i-1}] \subseteq \mathcal{Q}_{i}$ and hence $B/n_{i-1}\in \mathcal{K}_{b}$. Thus, applying Claim 1, we deduce that $\mathcal{K}_{b} = \mathcal{I}^{+} \!\restriction\! (B/n_{i-1})$.

    \medskip
	
	\textit{Claim 3.} For all $b\in [B/n_{i-1}]^{<\omega}$ it is true that $\mathcal{K}_{b} \neq \emptyset$ if and only if $T_{b} \in \mathcal{I}^{+}$. 

    \smallskip
 
	Indeed, by Claim 2, if $\mathcal{K}_{b} \neq \emptyset$ then $\mathcal{K}_{b} = \mathcal{I}^{+} \!\restriction\! (B/n_{i-1})$, thus $B/n_{i-1} \in \mathcal{K}_{b}$ and hence $[c\cup b, B/n_{i-1}] \subseteq \mathcal{Q}_{i}$, consequently $[c\cup b\cup \{m\}, B/n_{i-1}] \subseteq [c\cup b, B/n_{i-1}] \subseteq \mathcal{Q}_{i}$ for each $m\in B/n_{i-1}$ such that $c\cup b <m$, therefore $T_{b}= \{ m\in B/n_{i-1} : c\cup b <m \}$, reason why we conclude that $T_{b} \in \mathcal{I}^{+}$. Conversely, if $T_{b}\in \mathcal{I}^{+}$ then we deduce that $[c\cup b, T_{b}] \subseteq \bigcup_{m\in T_{b}} [c\cup b \cup \{m\}, T_{b}] \subseteq \bigcup_{m\in T_{b}} [c\cup b \cup \{m\}, B/n_{i-1}] \subseteq \mathcal{Q}_{i}$, which means that $T_{b} \in \mathcal{K}_{b}$ and hence $\mathcal{K}_{b} \neq \emptyset$. 

    \medskip
	
	\textit{Claim 4.} If $b\in [B/n_{i-1}]^{<\omega}$ is such that $\mathcal{K}_{b} \neq \emptyset$ then $b\neq \emptyset$ and $\max b \in T_{b-\{\max b\}}$. 

    \smallskip
        
	Indeed, since $[c,B/n_{i-1}] \not\subseteq \mathcal{Q}_{i}$ then $B/n_{i-1} \notin \mathcal{K}_{\emptyset}$, so that $\mathcal{K}_{\emptyset} \neq \mathcal{I}^{+} \!\restriction\! (B/n_{i-1})$ and hence $\mathcal{K}_{\emptyset} = \emptyset$, as follows from Claim 2. Therefore, if $\mathcal{K}_{b} \neq \emptyset$ then $b\neq \emptyset$; moreover, by Claim 2, it follows that $\mathcal{K}_{b} = \mathcal{I}^{+} \!\restriction\! (B/n_{i-1})$, thus $B/n_{i-1} \in \mathcal{K}_{b}$ and hence $[c\cup b, B/n_{i-1}] \subseteq \mathcal{Q}_{i}$, reason why we conclude that $\max b \in T_{b-\{\max b\}}$.

    \medskip
	
	Now, let $\Gamma = \{ b\in[B/n_{i-1}]^{<\omega} : \mathcal{K}_{b} = \emptyset \}$, then $\Gamma\neq \emptyset$ because $\mathcal{K}_{\emptyset} = \emptyset$ and hence $\emptyset \in \Gamma$; so, we consider the sequence $\{\mathcal{D}_{b}^{*}\}_{b\in [\omega]^{<\omega}}$ of subsets of $\mathcal{I}^{+} \!\restriction\! (B/n_{i-1})$ given by $\mathcal{D}_{b}^{*} = \mathcal{I}^{+} \!\restriction\! (B/n_{i-1})$, except for each $b\in \Gamma$ for which we define $\mathcal{D}_{b}^{*}$ as follows:
	\begin{center}
		$\mathcal{D}_{b}^{*} = \{ N\in \mathcal{I}^{+} \!\restriction\! (B/n_{i-1}) : N \cap T_{b} = \emptyset \}$.
	\end{center}
    
	The family $\{\mathcal{D}_{b}^{*}\}_{b\in [\omega]^{<\omega}} \subseteq \wp (\mathcal{I}^{+} \!\restriction\! (B/n_{i-1}))$ defined in this way is actually a sequence of dense--open sets in $(\mathcal{I}^{+} \!\restriction\! (B/n_{i-1}), \subseteq)$. Indeed, for each $b\in \Gamma$ we have the following two facts: On the one hand, if $Y\in \mathcal{I}^{+} \!\restriction\! (B/n_{i-1})$ and $X\in \mathcal{D}_{b}^{*}$ are such that $Y\subseteq X$, then $X\cap T_{b} = \emptyset$ and hence $Y \cap T_{b} = \emptyset$, which implies that $Y\in \mathcal{D}_{b}^{*}$. Thus, $\mathcal{D}_{b}^{*}$ is an open set in $(\mathcal{I}^{+} \!\restriction\! (B/n_{i-1}), \subseteq)$. On the other hand, if $Y\in \mathcal{I}^{+} \!\restriction\! (B/n_{i-1})$ then $Y\cap T_{b} \notin \mathcal{I}^{+}$, since, by Claim 3, we get $T_{b} \notin \mathcal{I}^{+}$ as a consequence of $\mathcal{K}_{b} = \emptyset$; moreover, since $Y=(Y \setminus T_{b}) \cup (Y\cap T_{b})$ then necessarily $Y \setminus T_{b} \in \mathcal{I}^{+} \!\restriction\! (B/n_{i-1})$, and clearly the set $Y \setminus T_{b} \subseteq Y$ satisfies $(Y \setminus T_{b}) \cap T_{b} = \emptyset$, reason why we deduce that $Y \setminus T_{b} \in \mathcal{D}_{b}^{*}$. Thus, $\mathcal{D}_{b}^{*}$ is a dense set in $(\mathcal{I}^{+} \!\restriction\! (B/n_{i-1}), \subseteq)$.

    \medskip
	
	Since $\mathcal{I}$ is a semiselective ideal on $\omega$, then there exists a  diagonalization $C\in \mathcal{I}^{+} \!\restriction\! (B/n_{i-1}) \subseteq \mathcal{I}^{+} \!\restriction\! A$ for the sequence of dense--open sets $\{ \mathcal{D}_{b}^{*} \}_{b\in[\omega]^{<\omega}}$, therefore $C/b \in \mathcal{D}_{b}^{*}$ for all $b\in [C]^{<\omega}$; in addition, note that $c< \min C$ and $[c,C] \subseteq [a,A]$.

    \medskip
	
	Let us verify that $[C]^{<\omega} \subseteq \Gamma$. We use induction on the cardinality of $b$, where $b\in[C]^{<\omega}$.  Suppose that $[C]^{n} \subseteq \Gamma$, and let $b\in [C]^{n+1}$, then $b\neq \emptyset$ and $\max b \in C/(b-\{\max b\})$.  If  $b\notin \Gamma$ then $\mathcal{K}_{b} \neq \emptyset$; thus, by Claim 4, we deduce that $\max b \in T_{b-\{\max b\}}$, so that $(C/(b-\{\max b\})) \cap T_{b-\{\max b\}} \neq \emptyset$; nevertheless, since $b-\{\max b\} \in [C]^{n} \subseteq \Gamma$ and $C/(b-\{\max b\}) \in \mathcal{D}_{b-\{\max b\}}^{*}$ then $(C/(b-\{\max b\})) \cap T_{b-\{\max b\}} = \emptyset$, but this is contradictory. Therefore, we conclude that $[C]^{n+1} \subseteq \Gamma$. 

    \medskip
	
	Finally, notice that for each $d\in[\omega]^{<\omega}$ and each $D\in \mathcal{I}^{+} \!\restriction\! C$ such that $[d,D]\subseteq [c,C]$ we have that $d=c\cup b$ where $b\in[C]^{<\omega}$, then $\mathcal{K}_{b} = \emptyset$ and hence $D\notin \mathcal{K}_{b}$, which means that $[c\cup b, D] \not\subseteq \mathcal{Q}_{i}$. As a result, the set $C\in \mathcal{I}^{+} \!\restriction\! A$ satisfies that $[c,C] \subseteq [a,A]$ and it has the property that $[d,D] \not\subseteq \mathcal{Q}_{i}$ whenever $[d,D] \subseteq [c,C]$ with $D\in\mathcal{I}^{+} \!\restriction\! C$.

    \medskip
	
	$[\text{(b)} \Longrightarrow \text{(a)}].$ Suppose that the ideal $\mathcal{I}$ is not semiselective; so, by Proposition \ref{semiselective=(p^w)+(q^+)}, either $\mathcal{I}$ does not satisfy the property $(p^{w})$ or $\mathcal{I}$ does not satisfy the property $(q)$.

    \medskip
	
	Suppose first that the ideal $\mathcal{I}$ does not have the property $(p^{w})$. Let $A\in \mathcal{I}^{+}$ and let $\{\mathcal{D}_{i}\}_{i\in\omega} \subseteq \wp(\mathcal{I}^{+})$ be a sequence of dense--open sets in $(\mathcal{I}^{+}, \subseteq)$ such that for every $B\in \mathcal{I}^{+} \!\restriction\! A$ there is some $i\in\omega$ for which $B \not\subseteq^{*} D$ for each $D\in\mathcal{D}_{i}$; so, if we consider the sequence $\{Q_{i}\}_{i\in\omega} \subseteq \wp([\omega]^{\omega})$ defined by $\mathcal{Q}_{i} = \{ X\in[\omega]^{\omega} : (\exists\, D\in \mathcal{D}_{i}) (X\subseteq^{*} D)\}$, then we deduce that $[\emptyset, B] \not\subseteq \bigcap_{i\in\omega} \mathcal{Q}_{i}$ for all $B\in \mathcal{I}^{+} \!\restriction\! A$. Now, given any $i\in\omega$, $c\in [\omega]^{<\omega}$, and $C\in \mathcal{I}^{+}\!\restriction\! A$, with $|c|\leq i$ and $[c,C] \subseteq [\emptyset,A]$, we know that there exists some $D\in \mathcal{I}^{+} \!\restriction\! C$ such that $D\in \mathcal{D}_{i}$; thus, we conclude that $[c,D] \subseteq [c,C]$ and $[c,D] \subseteq \mathcal{Q}_{i}$.

    \medskip
	
	Suppose now that the ideal $\mathcal{I}$ does not have the property $(q)$. Let $A\in\mathcal{I}^{+}$ and let $A = \bigcup_{n\in\omega} f_{n}$ be an infinite partition of $A$, where each $f_{n}$ is finite, such that for every $C\in \mathcal{I}^{+} \!\restriction\! A$ there exists $J\in [\omega]^{\omega}$ for which $|f_{n}\cap C| \geq 2$ for all $n\in J$. Next, let $\{Q_{i}\}_{i\in\omega} \subseteq \wp([\omega]^{\omega})$ be the sequence defined by $\mathcal{Q}_{i} = \{ X\in[\omega]^{\omega} : (\exists\, j\geq i) (|f_{j}\cap X| \geq 2)\}$, and notice that $[\emptyset, B] \not\subseteq \bigcap_{i\in\omega} \mathcal{Q}_{i}$ for all $B\in \mathcal{I}^{+} \!\restriction\! A$. Now, given any $i\in\omega$, $c\in [\omega]^{<\omega}$, and $C\in \mathcal{I}^{+}\!\restriction\! A$, with $|c|\leq i$ and $[c,C] \subseteq [\emptyset,A]$, we can find some $j\geq i$ such that $c<\min f_{j}$ and $|f_{j}\cap C|\geq 2$; thus, if we take $d\in [\omega]^{<\omega}$ and $D\in \mathcal{I}^{+} \!\restriction\! C$ given by $d= c \cup (f_{j}\cap C)$ and $D= C/ f_{j}$, then we deduce that $[d,D] \subseteq [c,C]$ and $[d,D] \subseteq \mathcal{Q}_{i}$.	
\end{proof}

\smallskip

\begin{proposition}[] \label{Semiselective-Ellentuck-meager}
	Let $\mathcal{I}$ be a semiselective ideal on $\omega$. Then, for any set $\mathcal{A} \subseteq [\omega]^{\omega}$, the following statements are equivalent:
	\setlist{nolistsep}
	\begin{enumerate}
		\setlength{\itemsep}{0pt}	
		\item[(a)] $\mathcal{A}$ is $\mathcal{I}^{+}-$Ramsey null.
		\item[(b)] $\mathcal{A}$ is $\mathsf{Exp}(\mathcal{I}^{+})-$nowhere dense.
		\item[(c)] $\mathcal{A}$ is $\mathsf{Exp}(\mathcal{I}^{+})-$meager.
	\end{enumerate}
\end{proposition}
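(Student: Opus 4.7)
I will prove the cycle by establishing (a) $\Rightarrow$ (b) $\Rightarrow$ (c) together with (b) $\Rightarrow$ (a) and (c) $\Rightarrow$ (b). The implications (a) $\Rightarrow$ (b) and (b) $\Rightarrow$ (c) are direct consequences of the definitions, while the nontrivial content is split between (b) $\Rightarrow$ (a), which will be handled by Lemma \ref{Semiselective_Matet_Lemma}, and (c) $\Rightarrow$ (b), which is a $\sigma$-ideal diagonalization carried out using Lemma \ref{lemma_Ramsey_null} and property $(p^{w})$ from Proposition \ref{semiselective=(p^w)+(q^+)}.

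For (a) $\Rightarrow$ (b), given $[a,A] \in \mathsf{Exp}(\mathcal{I}^{+})$, the $\mathcal{I}^{+}$-Ramsey nullness of $\mathcal{A}$ applied with $s = a$ and $C = A$ produces $B \in \mathcal{I}^{+} \!\restriction\! A$ with $[a,B] \cap \mathcal{A} = \emptyset$, and $[a,B] \subseteq [a,A]$ witnesses $\mathsf{Exp}(\mathcal{I}^{+})$-nowhere denseness. For (b) $\Rightarrow$ (c), any $\mathsf{Exp}(\mathcal{I}^{+})$-nowhere dense set is automatically $\mathsf{Exp}(\mathcal{I}^{+})$-meager as a countable union of copies of itself.

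For (b) $\Rightarrow$ (a), the subtle point is that $\mathsf{Exp}(\mathcal{I}^{+})$-nowhere denseness of $\mathcal{A}$ at a basic open set $[s,C]$ produces only a $[b,B] \subseteq [s,C]$ disjoint from $\mathcal{A}$ in which the stem $b$ may properly end-extend $s$, whereas $\mathcal{I}^{+}$-Ramsey nullness demands that the stem remain exactly $s$. I will suppress this stem-extension by applying Lemma \ref{Semiselective_Matet_Lemma}(b) to the constant sequence $\mathcal{Q}_{i} = [\omega]^{\omega} \setminus \mathcal{A}$. Suppose for contradiction that $\mathcal{A}$ is $\mathsf{Exp}(\mathcal{I}^{+})$-nowhere dense but fails to be $\mathcal{I}^{+}$-Ramsey null at some pair $(s,C)$; then $[s,B] \not\subseteq \mathcal{A}^{c} = \bigcap_{i} \mathcal{Q}_{i}$ for every $B \in \mathcal{I}^{+} \!\restriction\! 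C$, so Lemma \ref{Semiselective_Matet_Lemma}(b) provides $[c,C'] \subseteq [s,C]$ with $C' \in \mathcal{I}^{+} \!\restriction\! C$ such that $[d,D] \not\subseteq \mathcal{A}^{c}$ for every $[d,D] \subseteq [c,C']$ with $D \in \mathcal{I}^{+} \!\restriction\! C'$. But the $\mathsf{Exp}(\mathcal{I}^{+})$-nowhere denseness of $\mathcal{A}$ applied to $[c,C']$ yields precisely such a $[d,D]$ entirely inside $\mathcal{A}^{c}$, a contradiction.

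For (c) $\Rightarrow$ (b), write $\mathcal{A} = \bigcup_{n\in\omega} \mathcal{N}_{n}$ with each $\mathcal{N}_{n}$ being $\mathsf{Exp}(\mathcal{I}^{+})$-nowhere dense; by the already proved (b) $\Rightarrow$ (a), each $\mathcal{N}_{n}$ is $\mathcal{I}^{+}$-Ramsey null, so Lemma \ref{lemma_Ramsey_null} shows that
\begin{equation*}
\mathcal{D}_{n} = \{ Y \in \mathcal{I}^{+} : \{ Z \in [\omega]^{\omega} : Z \subseteq^{*} Y \} \cap \mathcal{N}_{n} = \emptyset \}
\end{equation*}
is dense--open on $(\mathcal{I}^{+}, \subseteq)$ for every $n \in \omega$. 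Given $[a,A] \in \mathsf{Exp}(\mathcal{I}^{+})$, apply property $(p^{w})$ of Proposition \ref{semiselective=(p^w)+(q^+)} to $A$ and $\{\mathcal{D}_{n}\}_{n\in\omega}$ to obtain $B \in \mathcal{I}^{+} \!\restriction\! A$ such that for every $n \in \omega$ there is $A_{n} \in \mathcal{D}_{n}$ with $B \subseteq^{*} A_{n}$. Every $X \in [a,B]$ then satisfies $X \setminus a \subseteq B$ and hence $X \subseteq^{*} A_{n}$, so $X \notin \mathcal{N}_{n}$ for all $n$; consequently $[a,B] \subseteq [a,A]$ is disjoint from $\mathcal{A}$, establishing $\mathsf{Exp}(\mathcal{I}^{+})$-nowhere denseness. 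The main obstacle is the (b) $\Rightarrow$ (a) step, and in particular recognizing the constant-sequence application of Lemma \ref{Semiselective_Matet_Lemma} as the precise tool converting the abstract topological statement of nowhere denseness into the combinatorial requirement that the initial stem be preserved.
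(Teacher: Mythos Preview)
Your proof is correct, but it takes a different route from the paper's. The paper establishes the cycle (a) $\Rightarrow$ (b) $\Rightarrow$ (c) $\Rightarrow$ (a), handling (c) $\Rightarrow$ (a) in a single stroke: writing $\mathcal{A} = \bigcup_{i} \mathcal{N}_{i}$ with each $\mathcal{N}_{i}$ nowhere dense, it applies Lemma~\ref{Semiselective_Matet_Lemma} once with the \emph{non-constant} sequence $\mathcal{Q}_{i} = \mathcal{N}_{i}^{\complement}$, obtaining $[c,C]$ inside which no $[d,D]$ is contained in $\mathcal{N}_{i}^{\complement}$ for some fixed $i$, directly contradicting the nowhere denseness of that particular $\mathcal{N}_{i}$. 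By contrast, you split the nontrivial work into two pieces: (b) $\Rightarrow$ (a) via Lemma~\ref{Semiselective_Matet_Lemma} with a constant sequence, and then (c) $\Rightarrow$ (b) by a separate $\sigma$-additivity argument using Lemma~\ref{lemma_Ramsey_null} together with property $(p^{w})$. The paper's approach is shorter and exploits exactly the feature of Lemma~\ref{Semiselective_Matet_Lemma} that motivates its formulation for sequences (rather than single sets); your approach is more modular and in effect proves along the way that a countable union of $\mathcal{I}^{+}$-Ramsey null sets is $\mathcal{I}^{+}$-Ramsey null, anticipating Corollary~\ref{sigma_ideal/sigma_algebra}. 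One small point worth making explicit in your (b) $\Rightarrow$ (a) step: when nowhere denseness hands you $[d,D] \subseteq [c,C']$ with $D \in \mathcal{I}^{+}$, you should note that $[d,D] = [d,D/d]$ and $D/d \subseteq C'$, so the witness can be taken with $D \in \mathcal{I}^{+} \!\restriction\! C'$ as required by the conclusion of Lemma~\ref{Semiselective_Matet_Lemma}.
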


\begin{proof} 
	$[\text{(a)} \Longrightarrow \text{(b)} \Longrightarrow \text{(c)}].$ Straightforward.

    \smallskip
    
	$[\text{(c)} \Longrightarrow \text{(a)}].$ Fix a semiselective ideal $\mathcal{I}$ on $\omega$, and let $\mathcal{A} \subseteq [\omega]^{\omega}$ be a $\mathsf{Exp}(\mathcal{I}^{+})-$meager set, then $\mathcal{A} = \bigcup_{i\in\omega} \mathcal{N}_{i}$ where $\mathcal{N}_{i} \subseteq [\omega]^{\omega}$ is some $\mathsf{Exp}(\mathcal{I}^{+})-$nowhere dense set for each $i\in\omega$. Suppose that $\mathcal{A}$ is not a $\mathcal{I}^{+}-$Ramsey null set, then there are $a\in [\omega]^{<\omega}$ and $A\in\mathcal{I}^{+}$ such that $[a,B] \not\subseteq \mathcal{A}^{\complement}$ for every $B\in \mathcal{I}^{+} \!\restriction\! A$, so that $[a,B] \not\subseteq \bigcap_{i\in\omega} \mathcal{N}_{i}^{\complement}$ for every $B\in \mathcal{I}^{+} \!\restriction\! A$. Therefore, applying Lemma \ref{Semiselective_Matet_Lemma}, there are $i\in \omega$, $c\in[\omega]^{<\omega}$, and $C\in \mathcal{I}^{+} \!\restriction\! A$, with $[c,C]\subseteq [a,A]$, such that for each $d\in[\omega]^{<\omega}$ and each $D\in \mathcal{I}^{+} \!\restriction\! C$ we have that $[d,D] \not\subseteq \mathcal{N}_{i}^{\complement}$ whenever $[d,D]\subseteq [c,C]$, thus the set $\mathcal{N}_{i}$ is not $\mathsf{Exp}(\mathcal{I}^{+})-$nowhere dense, which is contradictory.
\end{proof}

\smallskip

\begin{proposition}[] \label{Semiselective-Ellentuck-Baire}
	Let $\mathcal{I}$ be a semiselective ideal on $\omega$. Then, for any set $\mathcal{A} \subseteq [\omega]^{\omega}$, the following statements are equivalent:
	\setlist{nolistsep}
	\begin{enumerate}
		\setlength{\itemsep}{0pt}	
		\item[(a)] $\mathcal{A}$ is $\mathcal{I}^{+}-$Ramsey.
		\item[(b)] $\mathcal{A}$ has the abstract $\mathsf{Exp}(\mathcal{I}^{+})-$Baire property.
		\item[(c)] $\mathcal{A}$ has the $\mathsf{Exp}(\mathcal{I}^{+})-$Baire property.
	\end{enumerate}
\end{proposition}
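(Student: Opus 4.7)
The plan is to establish $(a) \Rightarrow (b) \Rightarrow (c) \Rightarrow (a)$. The first implication is tautological: given an $\mathcal{I}^{+}$-Ramsey set $\mathcal{A}$ and $[a,A] \in \mathsf{Exp}(\mathcal{I}^{+})$, the Ramsey property at $(a,A)$ furnishes $B \in \mathcal{I}^{+} \!\restriction\! A$ with $[a,B]$ decided by $\mathcal{A}$, and $[a,B]$ itself serves as the $[b,B] \subseteq [a,A]$ required by the abstract $\mathsf{Exp}(\mathcal{I}^{+})$-Baire property. The implication $(b) \Rightarrow (c)$ is one of the assertions of Fact \ref{facts_abstract_topology}.

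For $(c) \Rightarrow (a)$, suppose $\mathcal{A}$ has the $\mathsf{Exp}(\mathcal{I}^{+})$-Baire property, so that $\mathcal{A} = \mathcal{O} \triangle \mathcal{M}$ for some $\mathsf{Exp}(\mathcal{I}^{+})$-open $\mathcal{O}$ and $\mathsf{Exp}(\mathcal{I}^{+})$-meager $\mathcal{M}$. Fix $s \in [\omega]^{<\omega}$ and $C \in \mathcal{I}^{+}$. Proposition \ref{Semiselective-Ellentuck-meager} identifies $\mathcal{M}$ as $\mathcal{I}^{+}$-Ramsey null, and Lemma \ref{lemma_Ramsey_null} yields $Y \in \mathcal{I}^{+} \!\restriction\! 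C$ with $[s,Y] \cap \mathcal{M} = \emptyset$, since every $Z \in [s,Y]$ is almost contained in $Y$. Hence $\mathcal{A}$ agrees with $\mathcal{O}$ on $[s,Y]$, and it suffices to decide $\mathcal{O}$ on some $[s,B] \subseteq [s,Y]$. I then apply Lemma \ref{Semiselective_Matet_Lemma} to the sequence $\mathcal{Q}_{0} = \mathcal{O}^{c}$ and $\mathcal{Q}_{i} = [\omega]^{\omega}$ for $i \geq 1$, for which $\bigcap_{i \in \omega} \mathcal{Q}_{i} = \mathcal{O}^{c}$ and the conclusion of the lemma is vacuously violated for any $i \geq 1$, forcing $i = 0$ and $c = s$. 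This delivers either $B \in \mathcal{I}^{+} \!\restriction\! Y$ with $[s,B] \subseteq \mathcal{O}^{c}$, in which case $[s,B] \cap \mathcal{A} = \emptyset$ and we are done, or else a set $C' \in \mathcal{I}^{+} \!\restriction\! Y$ on which $\mathcal{O}$ is $\mathsf{Exp}(\mathcal{I}^{+})$-dense, meaning that $[d,D] \cap \mathcal{O} \neq \emptyset$ for every $[d,D] \subseteq [s,C']$.

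In the dense alternative I would show that $\mathcal{O}^{c} \cap [s,C']$ is $\mathsf{Exp}(\mathcal{I}^{+})$-nowhere dense. Given $[d,D] \subseteq [s,C']$, apply the abstract $\mathsf{Exp}(\mathcal{I}^{+})$-Baire property of the open set $\mathcal{O}$---granted by Fact \ref{facts_abstract_topology}---to obtain $[b,B'] \subseteq [d,D]$ with $[b,B'] \subseteq \mathcal{O}$ or $[b,B'] \cap \mathcal{O} = \emptyset$. The density of $\mathcal{O}$ rules out the second alternative, so $[b,B'] \subseteq \mathcal{O}$ and therefore $[b,B'] \cap \mathcal{O}^{c} = \emptyset$, as required. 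Consequently $\mathcal{O}^{c} \cap [s,C']$ is $\mathsf{Exp}(\mathcal{I}^{+})$-nowhere dense, hence $\mathsf{Exp}(\mathcal{I}^{+})$-meager, and by Proposition \ref{Semiselective-Ellentuck-meager} it is $\mathcal{I}^{+}$-Ramsey null. A final invocation of Lemma \ref{lemma_Ramsey_null}, applied to $\mathcal{O}^{c} \cap [s,C']$, produces $B \in \mathcal{I}^{+} \!\restriction\! C'$ with $[s,B] \cap \mathcal{O}^{c} = \emptyset$; since $\mathcal{A} = \mathcal{O}$ throughout $[s,Y] \supseteq [s,B]$, this gives $[s,B] \subseteq \mathcal{A}$ and finishes the argument.

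The main obstacle I anticipate is the passage from $\mathsf{Exp}(\mathcal{I}^{+})$-density to nowhere-density of the complement on $[s,C']$. This step invokes the abstract $\mathsf{Exp}(\mathcal{I}^{+})$-Baire property of an arbitrary $\mathsf{Exp}(\mathcal{I}^{+})$-open set $\mathcal{O}$ below an arbitrary basic set $[d,D]$: given a basic open witness $[t,E] \subseteq \mathcal{O}$ of some point of $[d,D] \cap \mathcal{O}$, one must produce a basic neighborhood $[b,B'] \subseteq [d,D] \cap [t,E]$, and the compatibility conditions force $B' \subseteq D \cap E$, whose membership in $\mathcal{I}^{+}$ is not automatic for an arbitrary ideal and is the delicate technical point underlying the relevant assertion of Fact \ref{facts_abstract_topology}. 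Once this ingredient is granted, Matet's dichotomy combined with Proposition \ref{Semiselective-Ellentuck-meager} chains the remaining steps cleanly.
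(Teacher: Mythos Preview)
Your argument is correct but follows a different path from the paper's. The paper establishes the cycle $(a)\Rightarrow(b)\Rightarrow(c)\Rightarrow(b)\Rightarrow(a)$: after noting that $(c)\Rightarrow(b)$ follows easily from Proposition~\ref{Semiselective-Ellentuck-meager}, it proves $(b)\Rightarrow(a)$ by contrapositive. Assuming $\mathcal A$ is not $\mathcal I^+$--Ramsey, there exist $a,A$ with $[a,B]\not\subseteq\mathcal A$ and $[a,B]\not\subseteq\mathcal A^{\complement}$ for every $B\in\mathcal I^+\!\restriction\!A$; one application of Lemma~\ref{Semiselective_Matet_Lemma} (with $\mathcal Q_0=\mathcal A$, $\mathcal Q_i=[\omega]^\omega$ otherwise) produces $C$ below which no basic set is contained in $\mathcal A$, and a second application (with $\mathcal Q_i=\mathcal A^{\complement}$ for all $i$) produces $[e,E]\subseteq[a,C]$ below which no basic set is contained in $\mathcal A^{\complement}$ either, so $\mathcal A$ fails the abstract Baire property. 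This route uses only Lemma~\ref{Semiselective_Matet_Lemma} for the substantive implication and never unpacks a decomposition $\mathcal A=\mathcal O\triangle\mathcal M$.

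Your route attacks $(c)\Rightarrow(a)$ head-on via that decomposition: Proposition~\ref{Semiselective-Ellentuck-meager} and Lemma~\ref{lemma_Ramsey_null} strip away $\mathcal M$, one call to Lemma~\ref{Semiselective_Matet_Lemma} yields a density dichotomy for $\mathcal O$, and then the abstract Baire property of $\mathcal O$ (from Fact~\ref{facts_abstract_topology}) upgrades density to nowhere-density of the complement, after which Proposition~\ref{Semiselective-Ellentuck-meager} and Lemma~\ref{lemma_Ramsey_null} finish. This is more machinery, and the step you flag as the ``main obstacle'' --- that an arbitrary $\mathsf{Exp}(\mathcal I^+)$--open set has the abstract Baire property --- is precisely the extra ingredient your proof needs that the paper's contrapositive $(b)\Rightarrow(a)$ avoids entirely. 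Your caution there is well placed: it is exactly the assertion of Fact~\ref{facts_abstract_topology} doing real work, and the paper simply takes it as given. Granting it, your argument goes through; the paper's two-fold use of Lemma~\ref{Semiselective_Matet_Lemma} is simply leaner and sidesteps the issue.
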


\begin{proof}
	$[\text{(a)} \Longrightarrow \text{(b)} \Longrightarrow \text{(c)}].$ Straightforward.

    \smallskip
    
	$[\text{(c)} \Longrightarrow \text{(b)}].$ This fact is easily deduced applying Proposition \ref{Semiselective-Ellentuck-meager}.

    \smallskip
    
	$[\text{(b)} \Longrightarrow \text{(a)}].$ Fix a semiselective ideal $\mathcal{I}$ on $\omega$, and let $\mathcal{A}\subseteq [\omega]^{\omega}$ be a non $\mathcal{I}^{+}-$Ramsey set, so that there are $a\in [\omega]^{<\omega}$ and $A\in \mathcal{I}^{+}$ such that both $[a,B] \not\subseteq \mathcal{A}$ and $[a,B] \not\subseteq \mathcal{A}^{\complement}$ for all $B\in \mathcal{I}^{+} \!\restriction\! A$. Firstly, let $\{\mathcal{Q}_{i}\}_{i\in\omega} \subseteq \wp([\omega]^{\omega})$ be the sequence given by $\mathcal{Q}_{0} = \mathcal{A}$ and $\mathcal{Q}_{i} = [\omega]^{\omega}$ for each $i>0$, so that $[a,B] \not\subseteq \bigcap_{i\in\omega} \mathcal{Q}_{i}$ for all $B\in \mathcal{I}^{+} \!\restriction\! A$; thus, by Lemma \ref{Semiselective_Matet_Lemma}, there is $C\in \mathcal I^{+} \!\restriction\! A$ such that $[d,D] \not\subseteq \mathcal{A}$ whenever $[d,D] \subseteq [a,C] \subseteq [a,A]$ with $D\in \mathcal{I}^{+} \!\restriction\! C$. Secondly, let $\{\mathcal{Q}_{i}^{*}\}_{i\in\omega} \subseteq \wp([\omega]^{\omega})$ be the sequence given by $\mathcal{Q}_{i}^{*} = \mathcal{A}^{\complement}$ for each $i\in\omega$, so that $[a,B] \not\subseteq \bigcap_{i\in\omega} \mathcal{Q}_{i}^{*}$ for all $B\in \mathcal{I}^{+} \!\restriction\! C$; then, by Lemma \ref{Semiselective_Matet_Lemma}, there are $e\in [\omega]^{<\omega}$ and $E\in \mathcal{I}^{+} \!\restriction\! C$, with $[e,E] \subseteq [a,C]$, such that $[d,D] \not\subseteq \mathcal{A}^{\complement}$ whenever $[d,D] \subseteq [e,E]$ with $D\in \mathcal{I}^{+} \!\restriction\! E$. Taking this into account, we have that $[e,E] \in \mathsf{Exp}(\mathcal{I}^{+})$ satisfies both $[d,D] \not\subseteq \mathcal{A}$ and $[d,D] \not\subseteq \mathcal{A}^{\complement}$ for all $[d,D]\in \mathsf{Exp}(\mathcal{I}^{+})$ with $[d,D]\subseteq [e,E]$; therefore, we conclude that the set $\mathcal{A}$ does not have the abstract $\mathsf{Exp}(\mathcal{I}^{+})-$Baire property.
\end{proof}

\smallskip

\begin{corollary} \label{sigma_ideal/sigma_algebra}
	Let $\mathcal{I}$ be a semiselective ideal on $\omega$. Then, the following hold: 
	\setlist{nolistsep}
	\begin{enumerate}
		\setlength{\itemsep}{0pt}	
		\item[(1)] The collection $\mathcal{R}_{0}(\mathcal{I})$ of all $\mathcal{I}^{+}-$Ramsey null sets forms a $\sigma-$ideal on $[\omega]^{\omega}$.
		\item[(2)] The collection $\mathcal{R}(\mathcal{I})$ of all $\mathcal{I}^{+}-$Ramsey sets forms a $\sigma-$algebra on $[\omega]^{\omega}$.
	\end{enumerate}
\end{corollary}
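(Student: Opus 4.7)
The plan is to read both parts of the corollary as immediate consequences of what has already been established in the preceding two propositions combined with Fact \ref{facts_abstract_topology}, so essentially no new combinatorial work is required.

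For part (1), I would first recall from Fact \ref{facts_abstract_topology} that the family of all $\mathsf{Exp}(\mathcal{I}^{+})$-meager sets is always a $\sigma$-ideal on $[\omega]^{\omega}$ (closure under subsets and countable unions follows from the definition of meagerness via countable unions of $\mathsf{Exp}(\mathcal{I}^{+})$-nowhere dense sets, and $[\omega]^{\omega}$ itself is not meager because $[\emptyset,\omega]$ is a nonempty basic open set of $\mathsf{Exp}(\mathcal{I}^{+})$). Proposition \ref{Semiselective-Ellentuck-meager} then gives, for a semiselective $\mathcal{I}$, that $\mathcal{R}_{0}(\mathcal{I})$ coincides setwise with the $\mathsf{Exp}(\mathcal{I}^{+})$-meager sets, so $\mathcal{R}_{0}(\mathcal{I})$ inherits the $\sigma$-ideal structure.

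For part (2), the same strategy applies with the Baire property in place of meagerness. Fact \ref{facts_abstract_topology} records that the sets with the $\mathsf{Exp}(\mathcal{I}^{+})$-Baire property form a $\sigma$-algebra: this is the standard argument that if $\mathcal{X}_{n}\triangle \mathcal{O}_{n}$ is $\mathsf{Exp}(\mathcal{I}^{+})$-meager for each $n$, then $\bigl(\bigcup_{n}\mathcal{X}_{n}\bigr)\triangle\bigl(\bigcup_{n}\mathcal{O}_{n}\bigr)$ is contained in $\bigcup_{n}(\mathcal{X}_{n}\triangle\mathcal{O}_{n})$ and hence meager by part (1), while complementation is handled by writing $\mathcal{X}^{\complement}$ as the symmetric difference of $\overline{\mathcal{O}}^{\complement}$ with a meager set whenever $\mathcal{X}\triangle \mathcal{O}$ is meager; these are precisely the arguments made rigorous by the $\sigma$-ideal property already obtained. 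Proposition \ref{Semiselective-Ellentuck-Baire} then identifies $\mathcal{R}(\mathcal{I})$ with this $\sigma$-algebra, yielding (2).

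Since the two propositions have done all the genuine work, the only thing to verify carefully is that the abstract topological setup does give a $\sigma$-algebra and $\sigma$-ideal in the absence of a true topology. This is the step I expect to be the main (very mild) obstacle: one must check that $[\omega]^{\omega}$ is not itself $\mathsf{Exp}(\mathcal{I}^{+})$-meager, which in our semiselective context follows from Proposition \ref{MM} together with Proposition \ref{Semiselective-Ellentuck-meager} (if $[\omega]^{\omega}$ were $\mathsf{Exp}(\mathcal{I}^{+})$-meager, it would be $\mathcal{I}^{+}$-Ramsey null, contradicting the existence of $[\emptyset,\omega]$ as a nonempty $[s,B]$). With that sanity check in hand, the corollary follows by quoting Propositions \ref{Semiselective-Ellentuck-meager} and \ref{Semiselective-Ellentuck-Baire} and Fact \ref{facts_abstract_topology}.
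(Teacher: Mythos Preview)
Your proposal is correct and follows exactly the paper's approach: the paper's proof is a one-line reference to Fact \ref{facts_abstract_topology} together with Propositions \ref{Semiselective-Ellentuck-meager} and \ref{Semiselective-Ellentuck-Baire}, which is precisely what you do (with some extra unpacking of the sanity checks).
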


\begin{proof}
		This result follows from Fact \ref{facts_abstract_topology} as well as from both Proposition \ref{Semiselective-Ellentuck-meager} and Proposition \ref{Semiselective-Ellentuck-Baire}. 
\end{proof}

\section{Additivity and the generalized Suslin operation}\label{additivity}

In this section, we present some results about the additivity of the ideal of Ramsey null sets with respect to a semiselective ideal $\mathcal I$ as well as the distributivity of the quasi--order $(\mathcal I^+, \subseteq^*)$. We use then these results to characterize semiselectivity in terms of the Suslin operation $\mathcal A_\omega$.

\subsection{On additivity and distributivity}

Let $\mathcal{I}$ be an ideal on $\omega$, and let $\kappa$ be an infinite cardinal. We say that $(\mathcal{I}^{+}, \subseteq^{*})$ is \textit{$\kappa-$distributive} if the intersection of $\leq \kappa$ dense--open sets in $(\mathcal{I}^{+}, \subseteq^{*})$ is also a dense--open set in $(\mathcal{I}^{+}, \subseteq^{*})$.

\begin{definition}
	Let $\mathcal{I}$ be an ideal on $\omega$. The cardinal $\mathfrak{h}_{\mathcal{I}}$ is the smallest cardinal $\kappa$ such that $(\mathcal{I}^{+}, \subseteq^{*})$ is not $\kappa-$distributive.
\end{definition}

Naturally, the cardinal $\mathfrak{h}_{\mathcal{I}}$ is always infinite for every ideal $\mathcal{I}$ on $\omega$. Moreover, for the ideal $ [\omega]^{<\omega}$ we have $\mathfrak{h}_{[\omega]^{<\omega}} = \mathfrak{h}$, where $\mathfrak{h}$ is the cardinal characteristic of the continuum known as \textit{distributive number} or \textit{shattering number}, introduced by Balcar, Pelant, and Simon in \cite{Balcar-Pelant-Simon} (see also \cite[Chapter 9]{Halbeisen} and \cite[Section 6]{Blass}). 

\medskip

Given an ideal $\mathcal I$ on $\omega$, for every $A\in \mathcal I^+$ the set $\mathcal{D}_A =\{ B\in \mathcal{I}^+ : (|B \setminus A|<\omega \wedge |A\setminus B|=\omega) \vee  (|B\cap A|<\omega) \}$ is dense--open in $(\mathcal{I}^{+}, \subseteq^{*})$. In words, $\mathcal{D}_A$ is the collection of all $B\in \mathcal{I}^+$ strictly almost contained in $A$ or almost disjoint from $A$. The collection of dense--open sets $\{\mathcal{D}_A\}_{A\in \mathcal{I}^{+}}$ has size $\mathfrak{c}$ and its intersection is empty, since $A\notin \mathcal{D}_A$ for every $A\in \mathcal{I}^+$. This shows that $\mathfrak{h}_{\mathcal{I}} \leq \mathfrak{c}$.

\medskip

The following fact provides useful combinatorial characterizations about the distributivity of $(\mathcal{I}^{+}, \subseteq^{*})$, which can be easily verified.

\begin{fact}[] \label{p^w}
	Let $\mathcal{I}$ be an ideal on $\omega$. Then, the following statements are equivalent:
	\setlist{nolistsep}
	\begin{enumerate}
		\setlength{\itemsep}{0pt}	
		\item[(a)] $\mathfrak{h}_{\mathcal{I}} > \omega$.
		\item[(b)] $(\mathcal{I}^{+}, \subseteq^{*})$ is $\omega-$distributive.
		\item[(c)] $\mathcal{I}$ is $*-$semiselective: for each $A\in \mathcal{I}^{+}$ and for every sequence $\{ \mathcal{D}_{n} \}_{n\in \omega} \subseteq \wp(\mathcal{I}^{+})$ of dense--open sets in $(\mathcal{I}^{+}, \subseteq^{*})$, there is $D_{\infty} \in \mathcal{I}^{+}$ such that $D_{\infty} \subseteq^{*} A$ and $D_{\infty}/m \in \mathcal{D}_{m}$ for all $m\in D_{\infty}$.
		\item[(d)] $\mathcal{I}$ has the property $(p^{w})$: for every $A\in \mathcal{I}^{+}$ and every sequence $\{ \mathcal{D}_{n} \}_{n\in\omega} \subseteq \wp(\mathcal{I}^{+})$ of dense--open sets in $(\mathcal{I}^{+}, \subseteq )$, there exists some $B\in \mathcal{I}^{+} \!\restriction\! A$ with the property that for each $n\in\omega$ there is $A_{n}\in \mathcal{D}_{n}$ such that $B \subseteq^{*} A_{n}$.
		\item[(e)] $\mathcal I$ has the property $*(p^{w})$: for every $A\in \mathcal{I}^{+}$ and every sequence $\{ \mathcal{D}_{n} \}_{n\in\omega} \subseteq \wp(\mathcal{I}^{+})$ of dense--open sets in $(\mathcal{I}^{+}, \subseteq^* )$, there exists some $B\in \mathcal{I}^{+}$ with $B \subseteq^{*} A$ and with the property that for each $n\in\omega$ there is $A_{n}\in \mathcal{D}_{n}$ such that $B \subseteq^{*} A_{n}$.
	\end{enumerate}
\end{fact}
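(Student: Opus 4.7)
The plan is to prove (a)$\Leftrightarrow$(b) directly and then close the cycle (b)$\Rightarrow$(c)$\Rightarrow$(b) together with (b)$\Rightarrow$(d)$\Rightarrow$(e)$\Rightarrow$(c). The equivalence (a)$\Leftrightarrow$(b) is immediate from the definition of $\mathfrak{h}_{\mathcal{I}}$, since $\mathfrak{h}_{\mathcal{I}} > \omega$ says precisely that the intersection of any countable family of dense--open subsets of $(\mathcal{I}^{+}, \subseteq^{*})$ is again dense--open.

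For (b)$\Rightarrow$(c), given $A \in \mathcal{I}^{+}$ and a sequence $\{\mathcal{D}_{n}\}_{n \in \omega}$ of dense--open subsets of $(\mathcal{I}^{+}, \subseteq^{*})$, I would apply $\omega$-distributivity to pick $D \in \bigcap_{n} \mathcal{D}_{n}$ with $D \subseteq^{*} A$, set $D_{\infty} := D \cap A \in \mathcal{I}^{+} \!\restriction\! A$ (noting $D_{\infty} =^{*} D$), and use openness on $(\subseteq^{*})$ twice: first via $D_{\infty} \subseteq^{*} D$ to place $D_{\infty}$ in every $\mathcal{D}_{n}$, and then via $D_{\infty}/m \subseteq^{*} D_{\infty}$ to conclude $D_{\infty}/m \in \mathcal{D}_{m}$ for every $m \in D_{\infty}$. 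Conversely, for (c)$\Rightarrow$(b), I would set $\mathcal{E}_{n} := \bigcap_{k \leq n} \mathcal{D}_{k}$, noting that a finite intersection of dense--open sets is again dense--open, and apply $*$-semiselectivity to $\{\mathcal{E}_{n}\}_{n \in \omega}$ to obtain $D_{\infty} \subseteq^{*} A$ with $D_{\infty}/m \in \mathcal{E}_{m}$ for every $m \in D_{\infty}$. For each $n \in \omega$, picking any $m \in D_{\infty}$ with $m \geq n$ yields $D_{\infty}/m \in \mathcal{D}_{n}$, and openness on $(\subseteq^{*})$ applied to $D_{\infty} \subseteq^{*} D_{\infty}/m$ then forces $D_{\infty} \in \mathcal{D}_{n}$, so $D_{\infty}$ witnesses density of $\bigcap_{n} \mathcal{D}_{n}$ on $(\mathcal{I}^{+}, \subseteq^{*})$.

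For (b)$\Rightarrow$(d), given $\{\mathcal{D}_{n}\}$ dense--open on $(\subseteq)$, I would lift to $\tilde{\mathcal{D}}_{n} := \{M \in \mathcal{I}^{+} : (\exists X \in \mathcal{D}_{n})\, M \subseteq^{*} X\}$, which is visibly dense--open on $(\subseteq^{*})$, use $\omega$-distributivity to pick $B \in \bigcap_{n} \tilde{\mathcal{D}}_{n}$ with $B \subseteq^{*} A$, and replace $B$ by $B \cap A$ to secure $B \in \mathcal{I}^{+} \!\restriction\! A$. For (d)$\Rightarrow$(e), the key observation is that any $(\subseteq^{*})$-dense--open set is automatically $(\subseteq)$-dense--open: openness follows from $N \subseteq M \Rightarrow N \subseteq^{*} M$, and density is obtained by replacing any $M$ with $M \subseteq^{*} N$ by $M \cap N \subseteq N$, which remains in the set by $(\subseteq^{*})$-openness since $M \cap N =^{*} M$. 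Finally, for (e)$\Rightarrow$(c), applying $*-(p^{w})$ yields $B \subseteq^{*} A$ with $B \subseteq^{*} A_{n} \in \mathcal{D}_{n}$ for each $n$; openness on $(\subseteq^{*})$ places $B$ in every $\mathcal{D}_{n}$, and then $B/m \subseteq^{*} B$ gives $B/m \in \mathcal{D}_{m}$ for every $m \in B$, so $D_{\infty} := B$ is the required diagonalizer.

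The main subtlety throughout is keeping clean track of whether density and openness refer to $\subseteq$ or to $\subseteq^{*}$, since this is what distinguishes (d) from (e) and shapes each implication. The enabling observation is that $(\subseteq^{*})$-openness makes $\mathcal{D}_{n}$-membership invariant under finite modifications, so $X \in \mathcal{D}_{n}$ iff $X/m \in \mathcal{D}_{n}$ for every $m$; this invariance is what lets the pointwise diagonal condition of $*$-semiselectivity be traded for uniform membership in each $\mathcal{D}_{n}$, and conversely. The $\mathcal{E}_{n}$-telescoping for (c)$\Rightarrow$(b) and the $\tilde{\mathcal{D}}_{n}$-lifting for (b)$\Rightarrow$(d) are the only other nonroutine devices, and I expect them to carry most of the real weight of the argument.
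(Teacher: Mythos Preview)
Your argument is correct and complete. The paper itself records this fact with a one-word proof (``Straightforward''), so there is no substantive approach in the paper to compare against; what you have written is precisely the routine verification the authors omit. Your handling of the only two points that require a moment's thought---the telescoping $\mathcal{E}_n=\bigcap_{k\le n}\mathcal{D}_k$ in (c)$\Rightarrow$(b) to cover indices $n\notin D_\infty$, and the $(\subseteq^*)$-closure $\tilde{\mathcal{D}}_n$ in (b)$\Rightarrow$(d) to pass from $\subseteq$-dense--open to $\subseteq^*$-dense--open---is exactly right, as is your observation that $(\subseteq^*)$-openness makes membership invariant under finite modification, which is what collapses the diagonal condition of (c) to uniform membership.
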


\begin{proof}
	Straightforward.
\end{proof}

\smallskip

For a given ideal $\mathcal{I}$ on $\omega$, the additivity of the ideal of Ramsey null sets relative to $\mathcal{I}$ as well as the additivity of the algebra of Ramsey sets relative to $\mathcal{I}$ are defined as follows:

\begin{definition}
	Let $\mathcal{I}$ be an ideal on $\omega$, and let $\mathcal{R}_{0}(\mathcal{I})$ be the ideal on $[\omega]^{\omega}$ formed by the $\mathcal{I}^{+}-$Ramsey null sets. The \textit{additivity of $\mathcal{R}_{0}(\mathcal{I})$} is the cardinal $\textsf{add} (\mathcal{R}_{0}(\mathcal{I})) = \min \{ |\mathcal{E}| : \mathcal{E} \subseteq \mathcal{R}_{0}(\mathcal{I}) \,\wedge\, \bigcup \mathcal{E} \notin \mathcal{R}_{0}(\mathcal{I}) \}$.
\end{definition}

\begin{definition}
	Let $\mathcal{I}$ be an ideal on $\omega$, and let $\mathcal{R}(\mathcal{I})$ be the algebra on $[\omega]^{\omega}$ formed by the $\mathcal{I}^{+}-$Ramsey sets. The \textit{additivity of $\mathcal{R}(\mathcal{I})$} is the cardinal ${\textsf{add}} (\mathcal{R}(\mathcal{I})) = \min \{ |\mathcal{E}| : \mathcal{E} \subseteq \mathcal{R}(\mathcal{I}) \,\wedge\, \bigcup \mathcal{E} \notin \mathcal{R}(\mathcal{I}) \}$.
\end{definition}

Concerning the additivity of $\mathcal{R}_{0}(\mathcal{I})$ and $\mathcal{R}(\mathcal{I})$, note that $\omega \leq \textsf{add} (\mathcal{R}_{0}(\mathcal{I})) \leq \mathfrak{c}$ and $\omega \leq {\textsf{add}} (\mathcal{R}(\mathcal{I})) \leq \mathfrak{c}$ for every  ideal $\mathcal{I}$ on $\omega$.

\medskip

The following result is due to  Matet in \cite[Proposition 9.5]{Matet1} for the class of selective ideals, extending a previous result of Plewik in \cite[Section 2]{Plewik} for the case $\mathcal{I} = [\omega]^{<\omega}$ (see also \cite[Theorem 10.2]{Halbeisen}). We improve this result, completing the argument in Matet's proof and generalizing it to any semiselective ideal $\mathcal{I}$ on $\omega$.

\begin{proposition}[] \label{Additivity}
    For every ideal $\mathcal{I}$ on $\omega$, it holds that
    \begin{center}
        $\mathfrak{h}_{\mathcal{I}} \geq \textsf{add} (\mathcal{R}_{0}(\mathcal{I})) \geq {\textsf{add}} (\mathcal{R}(\mathcal{I}))$.
    \end{center}
    Moreover, if the ideal $\mathcal{I}$ is semiselective, then
    \begin{center}
        $\mathfrak{h}_{\mathcal{I}} = \textsf{add} (\mathcal{R}_{0}(\mathcal{I})) = {\textsf{add}} (\mathcal{R}(\mathcal{I}))$.
    \end{center} 
\end{proposition}

\begin{proof}	
	First we prove that ${\textsf{add}} (\mathcal{R}(\mathcal{I})) \leq \textsf{add} (\mathcal{R}_{0}(\mathcal{I}))$ for every ideal $\mathcal{I}$. Let $\{ \mathcal{C}_{\xi} \}_{\xi\in \textsf{add} (\mathcal{R}_{0}(\mathcal{I}))} \subseteq \wp([\omega]^{\omega})$ be a collection of $\mathcal{I}^{+}-$Ramsey null sets such that its union $\mathcal C =\bigcup_{\xi\in \textsf{add} (\mathcal{R}_{0}(\mathcal{I}))} \mathcal{C}_{\xi}$ is not $\mathcal{I}^{+}-$Ramsey null. Clearly, if $\mathcal C$ is not $\mathcal{I}^{+}-$Ramsey,  we are done. Otherwise, if $\mathcal C$ is a $\mathcal{I}^{+}-$Ramsey set, then there exists some basic set $[s,B] \subseteq \mathcal{C}$ with $B\in \mathcal{I}^{+}$, and thus, there is a set  $\mathcal{A} \subseteq [s,B]$ such that $\mathcal{A}$ is not $\mathcal{I}^{+}-$Ramsey,  according to Proposition \ref{MM}; therefore,  $\mathcal{A} = \bigcup_{\xi\in \textsf{add} (\mathcal{R}_{0}(\mathcal{I}))} (\mathcal{A} \cap \mathcal{C}_{\xi})$ where each set $\mathcal{A} \cap \mathcal{C}_{\xi}$ is $\mathcal{I}^{+}-$Ramsey null.

    \medskip
	
	Now, we show that $\textsf{add} (\mathcal{R}_{0}(\mathcal{I})) \leq \mathfrak{h}_{\mathcal{I}}$ for every ideal $\mathcal{I}$. Let $\{ \mathcal{D}_{\xi} \}_{\xi\in \mathfrak{h}_{\mathcal{I}}} \subseteq \wp(\mathcal{I}^{+})$ be a collection of dense--open sets in $(\mathcal{I}^{+}, \subseteq^{*})$ such that $\bigcap_{\xi\in \mathfrak{h}_{\mathcal{I}}} \mathcal{D}_{\xi}$ is not dense in $(\mathcal{I}^{+}, \subseteq^{*})$. For every $\xi\in \mathfrak{h}_{\mathcal{I}}$ we consider the set $\mathcal{A}_{\xi} \subseteq [\omega]^{\omega}$ given by $\mathcal{A}_{\xi} = \{ X\in [\omega]^{\omega} : (\exists\, D\in \mathcal{D}_{\xi}) (X \subseteq^{*} D) \}$, and we note that each set $\mathcal{A}_{\xi}^{\complement}$ is $\mathcal{I}^{+}-$Ramsey null, because for any basic set $[s,A]$ with $A\in \mathcal{I}^{+}$, there is some $B\in \mathcal{I}^{+}\!\restriction\!  A$ such that $B\in\mathcal{D}_{\xi}$, thus $[s,B] \subseteq \mathcal{A}_{\xi}$ and hence $[s,B] \cap \mathcal{A}_{\xi}^{\complement} = \emptyset$. However, notice that the set $\bigcup_{\xi\in \mathfrak{h}_{\mathcal{I}}} \mathcal{A}_{\xi}^{\complement}$ is not $\mathcal{I}^{+}-$Ramsey null. Indeed, let $A\in \mathcal{I}^{+}$ be such that $\mathcal{I}^{+} \!\restriction\! A \cap \bigcap_{\xi\in \mathfrak{h}_{\mathcal{I}}} \mathcal{D}_{\xi} = \emptyset$. Suppose, in search of a contradiction,  that $\bigcup_{\xi\in \mathfrak{h}_{\mathcal{I}}} \mathcal{A}_{\xi}^{\complement}$ is  $\mathcal{I}^{+}-$Ramsey null, then there is some $B \in \mathcal{I}^{+} \!\restriction\! A$ such that $[B]^{\omega} \cap \bigcup_{\xi\in \mathfrak{h}_{\mathcal{I}}} \mathcal{A}_{\xi}^{\complement} = \emptyset$ and hence $[B]^{\omega} \subseteq \bigcap_{\xi\in \mathfrak{h}_{\mathcal{I}}} \mathcal{A}_{\xi}$, but this implies that $B\in \bigcap_{\xi\in \mathfrak{h}_{\mathcal{I}}} \mathcal{D}_{\xi}$, and this contradicts the choice of $A$.

    \medskip
	
	Let us now check that $\mathfrak{h}_{\mathcal{I}} \leq \textsf{add} (\mathcal{R}_{0}(\mathcal{I}))$ whenever the ideal $\mathcal{I}$ is semiselective. Let $\kappa < \mathfrak{h}_{\mathcal{I}}$ and let $\{ \mathcal{C}_{\xi} \}_{\xi \in \kappa} \subseteq \wp([\omega]^{\omega})$ be any collection of $\mathcal{I}^{+}-$Ramsey null sets. Using Lemma  \ref{lemma_Ramsey_null}, for each $\xi\in\kappa$ we can consider the dense--open set $\mathcal{D}_{\xi}$ in $(\mathcal{I}^{+}, \subseteq^{*})$ given by $\mathcal{D}_{\xi} = \{ X\in \mathcal{I}^{+} : (\forall\, Y \in [\omega]^{\omega}) (Y\subseteq^{*}X \rightarrow Y\notin \mathcal{C}_{\xi}) \}$, then the set $\bigcap_{\xi\in\kappa} \mathcal{D}_{\xi}$ is also dense--open in $(\mathcal{I}^{+}, \subseteq^{*})$ because $\kappa < \mathfrak{h}_{\mathcal{I}}$. Therefore, for every basic set $[s,A]$ with $A\in \mathcal{I}^{+}$, there is some $B\in \mathcal{I}^{+} \!\restriction\! A$ such that $B \in \bigcap_{\xi\in\kappa} \mathcal{D}_{\xi}$, thus $Y \notin \bigcup_{\xi\in\kappa} \mathcal{C}_{\xi}$ for each infinite set $Y \subseteq^{*} B$, and hence $[s,B] \cap \bigcup_{\xi\in\kappa} \mathcal{C}_{\xi} = \emptyset$. This shows that the set $\bigcup_{\xi\in\kappa} C_{\xi}$ is $\mathcal{I}^{+}-$Ramsey null, and so $\kappa < \textsf{add} (\mathcal{R}_{0}(\mathcal{I}))$.

    \medskip
	
	Finally, we prove that $\textsf{add} (\mathcal{R}_{0}(\mathcal{I})) \leq {\textsf{add}} (\mathcal{R}(\mathcal{I}))$ whenever the ideal $\mathcal{I}$ is semiselective. Let $\kappa < \textsf{add} (\mathcal{R}_{0}(\mathcal{I}))$ and let $\{ \mathcal{X}_{\xi} \}_{\xi\in\kappa} \subseteq \wp([\omega]^{\omega})$ be any collection of $\mathcal{I}^{+}-$Ramsey sets. By virtue of both Proposition \ref{Semiselective-Ellentuck-meager} and Proposition \ref{Semiselective-Ellentuck-Baire}, we deduce that for each $\xi\in\kappa$ there is some $\mathsf{Exp}(\mathcal{I}^{+})-$open set $\mathcal{O}_{\xi}$ such that $\mathcal{X}_{\xi} \triangle \mathcal{O}_{\xi}$ is a $\mathcal{I}^{+}-$Ramsey null set, and hence the set $\bigcup_{\xi\in\kappa} (\mathcal{X}_{\xi} \triangle \mathcal{O}_{\xi})$ is also $\mathcal{I}^{+}-$Ramsey null since  $\kappa < \textsf{add} (\mathcal{R}_{0}(\mathcal{I}))$; additionally, notice that $(\bigcup_{\xi\in\kappa} \mathcal{X}_{\xi}) \triangle (\bigcup_{\xi\in\kappa} \mathcal{O}_{\xi}) \subseteq \bigcup_{\xi\in\kappa} (\mathcal{X}_{\xi} \triangle \mathcal{O}_{\xi})$; thus, we conclude that the set $\bigcup_{\xi\in\kappa} \mathcal{X}_{\xi}$ is $\mathcal{I}^{+}-$Ramsey, and so  $\kappa < {\textsf{add}} (\mathcal{R}(\mathcal{I}))$.
\end{proof}

\subsection{On the generalized Suslin operation $\mathcal{A}_{\kappa}$} 

We prove that for any semiselective ideal $\mathcal I$, the $\sigma-$algebra $\mathcal R(\mathcal I)$ of all $\mathcal{I}^{+}-$Ramsey sets is closed under the generalized Suslin operation $\mathcal A_\kappa$ whenever $\omega \leq \kappa<\mathfrak{h}_{\mathcal I}$. To do this, we first present some notation and terminology that will be used in the proof of Proposition \ref{generalized_Suslin_operation}.

\medskip

Let $\kappa$ be an infinite cardinal. If $\sigma$ and $\tau$ are elements of $[\kappa]^{<\omega}$ we write $\sigma\sqsubseteq \tau$ if $\sigma$ is an initial segment of $\tau$ in their increasing enumerations. For each $\sigma\in [\kappa]^{<\omega}$, as in the case of 
$[\omega]^{\omega}$, we denote by $[\sigma]$ the set of all members of $[\kappa]^{\omega}$ that have $\sigma$ as initial segment. Also, for each $n\in\omega$ and each $Z\in[\kappa]^{\omega}$, we denote by $r_{n}(Z)$ the initial segment of $Z$ consisting of its first $n$ elements.

\medskip

Let $\mathcal{I}$ be an ideal on $\omega$ and let $\mathcal{G} = \{ \mathcal{X}_{\sigma} \}_{\sigma\in[\kappa]^{<\omega}} \subseteq \wp([\omega]^{\omega})$ be a family of $\mathcal{I}^{+}-$Ramsey sets indexed by elements of $[\kappa]^{<\omega}$, then the \textit{generalized Suslin operation $\mathcal{A}_{\kappa}$} applied to the family $\mathcal{G}$ is defined by
\begin{center}
	$\mathcal{A}_{\kappa} (\mathcal{G}) = \bigcup_{Z\in [\kappa]^{\omega}} \bigcap_{n\in\omega} \mathcal{X}_{r_{n}(Z)}$.
\end{center}

\smallskip

In \cite[Theorem 2.1]{Farah}, Farah shows that for every semiselective ideal $\mathcal{I}$, the family of $\mathcal{I}^{+}-$Ramsey sets is closed under the \textit{Suslin operation $\mathcal{A}_{\omega}$}, improving previous similar facts due to Ellentuck in \cite{Ellentuck} and Mathias in \cite{Mathias} (see also \cite[Theorem 10.9]{Halbeisen} and \cite[Theorem 9.2]{Todorcevic(BookTopology)}). Todorcevic proposes in \cite[Lemma 7.18]{Todorcevic(BookRamsey)} an alternative proof of this Farah's result. Matet shows  in \cite[Proposition 9.8]{Matet1}  that the family of $\mathcal{I}^{+}-$Ramsey sets is closed under the generalized Suslin operation $\mathcal{A}_{\kappa}$ whenever the ideal $\mathcal{I}$ is selective and $\kappa < \mathfrak{h}_{\mathcal{I}}$. Inspired on \cite[Section 7.1]{Todorcevic(BookRamsey)}, we extend the aforementioned results as follows:

\begin{proposition}[] \label{generalized_Suslin_operation}
If $\mathcal{I}$ is a semiselective ideal on $\omega$, then the collection $\mathcal{R}(\mathcal{I})$ of all $\mathcal{I}^{+}-$Ramsey sets is closed under the generalized Suslin operation $\mathcal{A}_{\kappa}$ for every infinite cardinal $\kappa<\mathfrak{h}_{\mathcal{I}}$. 
\end{proposition}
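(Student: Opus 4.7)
The plan is to mirror the strategy used for closure of $\mathcal{R}(\mathcal{I})$ under the countable Suslin operation $\mathcal{A}_{\omega}$, but to replace its countable diagonalization with the $\kappa$-size diagonalization that becomes available when $\kappa<\mathfrak{h}_{\mathcal{I}}$. The proof splits naturally into two parts: a reduction to the case in which each $\mathcal{X}_{\sigma}$ is $\mathsf{Exp}(\mathcal{I}^{+})-$open, and a fusion argument handling that open case.

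For the reduction, first replace each $\mathcal{X}_{\sigma}$ by $\mathcal{X}'_{\sigma}=\bigcap_{\tau\sqsubseteq\sigma}\mathcal{X}_{\tau}$; this is still $\mathcal{I}^{+}-$Ramsey thanks to Corollary \ref{sigma_ideal/sigma_algebra}(2), and since $\bigcap_{n}\mathcal{X}_{r_{n}(Z)}=\bigcap_{n}\mathcal{X}'_{r_{n}(Z)}$ the value of $\mathcal{A}_{\kappa}(\mathcal{G})$ is preserved, so we may assume the family is monotone in the sense that $\mathcal{X}'_{\sigma}\subseteq\mathcal{X}'_{\tau}$ whenever $\tau\sqsubseteq\sigma$. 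By Proposition \ref{Semiselective-Ellentuck-Baire} every $\mathcal{X}_{\sigma}$ admits a decomposition $\mathcal{X}_{\sigma}=\mathcal{O}_{\sigma}\triangle\mathcal{N}_{\sigma}$ with $\mathcal{O}_{\sigma}$ an $\mathsf{Exp}(\mathcal{I}^{+})-$open set and $\mathcal{N}_{\sigma}$ an $\mathcal{I}^{+}-$Ramsey null set. Since $|[\kappa]^{<\omega}|=\kappa<\mathfrak{h}_{\mathcal{I}}=\textsf{add}(\mathcal{R}_{0}(\mathcal{I}))$ by Proposition \ref{Additivity}, the union $\mathcal{N}=\bigcup_{\sigma}\mathcal{N}_{\sigma}$ is itself $\mathcal{I}^{+}-$Ramsey null. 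A direct verification shows $\mathcal{A}_{\kappa}(\mathcal{G})\triangle\mathcal{A}_{\kappa}(\{\mathcal{O}_{\sigma}\})\subseteq\mathcal{N}$: if $x$ belongs to the symmetric difference, the branch $Z$ witnessing membership on one side must disagree with the other side at some $r_{n}(Z)$, placing $x$ in $\mathcal{N}_{r_{n}(Z)}\subseteq\mathcal{N}$. Because $\mathcal{R}(\mathcal{I})$ is a $\sigma$-algebra containing $\mathcal{R}_{0}(\mathcal{I})$, it now suffices to show that $\mathcal{A}_{\kappa}(\{\mathcal{O}_{\sigma}\})$ is $\mathcal{I}^{+}-$Ramsey.

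For the open case, fix $[s,C]$ with $C\in\mathcal{I}^{+}$. For each $(\sigma,t)\in[\kappa]^{<\omega}\times[\omega]^{<\omega}$, define
\begin{equation*}
\mathcal{E}_{\sigma,t}=\{A\in\mathcal{I}^{+}:[t,A]\subseteq\mathcal{O}_{\sigma}\;\text{or}\;[t,A]\cap\mathcal{O}_{\sigma}=\emptyset\}.
\end{equation*}
Each $\mathcal{E}_{\sigma,t}$ is dense-open in $(\mathcal{I}^{+},\subseteq)$, because every $\mathsf{Exp}(\mathcal{I}^{+})-$open set is $\mathcal{I}^{+}-$Ramsey. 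Since $\kappa<\mathfrak{h}_{\mathcal{I}}$ the order $(\mathcal{I}^{+},\subseteq^{*})$ is $\kappa$-distributive, which upgrades property $(p^{w})$ from Fact \ref{p^w}(d) to its $\kappa$-size analog (by passing to the $\subseteq^{*}$-closures $\widetilde{\mathcal{E}}_{\sigma,t}=\{B\in\mathcal{I}^{+}:\exists\,A\in\mathcal{E}_{\sigma,t},\,B\subseteq^{*}A\}$, which are genuinely $\subseteq^{*}$-dense-open). This produces $B\in\mathcal{I}^{+}\!\restriction\! C$ such that for every $(\sigma,t)$ there exists $A_{\sigma,t}\in\mathcal{E}_{\sigma,t}$ with $B\subseteq^{*}A_{\sigma,t}$. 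The monotone auxiliary sets $\mathcal{A}_{\sigma}=\bigcup_{\sigma\sqsubset Z}\bigcap_{n\geq|\sigma|}\mathcal{O}_{r_{n}(Z)}$ obey the recursion $\mathcal{A}_{\sigma}=\mathcal{O}_{\sigma}\cap\bigcup_{\alpha>\max\sigma}\mathcal{A}_{\sigma\cup\{\alpha\}}$ and satisfy $\mathcal{A}_{\emptyset}=\mathcal{A}_{\kappa}(\{\mathcal{O}_{\sigma}\})$; one uses this recursion to propagate the local decisions read off from the $A_{\sigma,t}$ through the Suslin tree, thereby deciding whether $[s,B]\subseteq\mathcal{A}_{\kappa}(\{\mathcal{O}_{\sigma}\})$ or $[s,B]\cap\mathcal{A}_{\kappa}(\{\mathcal{O}_{\sigma}\})=\emptyset$.

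The principal obstacle is this final amalgamation step. The diagonalization only furnishes $B\subseteq^{*}A_{\sigma,t}$, hence yields a threshold $n_{\sigma,t}$ with $B/n_{\sigma,t}\subseteq A_{\sigma,t}$; these thresholds depend on $\sigma$ and may grow unboundedly as $\sigma$ ranges over $[\kappa]^{<\omega}$, which prevents a direct combination of the $\kappa$-many local decisions into a single uniform decision about $[s,B]$. The plan to overcome this is to invoke the $(q)$-part of semiselectivity (Proposition \ref{semiselective=(p^w)+(q^+)}) to further thin $B$, arranging that each $m\in B$ dominates the thresholds $n_{\sigma,t}$ relevant to all initial segments of $B$ lying below $m$; this forces the Suslin-tree recursion for $\{\mathcal{A}_{\sigma}\}$ to stabilize inside $[s,B]$. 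Making this bookkeeping precise, so that the thinning is compatible both with the monotonicity of the family and with the tree structure of the Suslin operation, is the delicate technical core of the argument.
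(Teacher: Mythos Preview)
Your reduction to the case of $\mathsf{Exp}(\mathcal{I}^{+})$-open sets is correct and clean: the symmetric-difference estimate $\mathcal{A}_{\kappa}(\mathcal{G})\triangle\mathcal{A}_{\kappa}(\{\mathcal{O}_{\sigma}\})\subseteq\bigcup_{\sigma}\mathcal{N}_{\sigma}$ does hold, and Proposition~\ref{Additivity} does absorb the $\kappa$-sized union of null sets. The gap is in the open case, which you yourself flag as unfinished. The obstacle you identify is real, but your proposed cure via property $(q)$ cannot work as stated: for each fixed finite $t$ there are already $\kappa$-many $\sigma\in[\kappa]^{<\omega}$, each contributing a threshold $n_{\sigma,t}$ with $B/n_{\sigma,t}\subseteq A_{\sigma,t}$, and there is no reason whatsoever for $\{n_{\sigma,t}:\sigma\in[\kappa]^{<\omega}\}$ to be bounded in $\omega$. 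Property $(q)$ only lets you thin $B$ relative to a partition into \emph{finite} pieces, so it cannot arrange for a single $m\in B$ to dominate uncountably many, potentially cofinal, thresholds. Thus the amalgamation step --- turning the local decisions recorded by the $A_{\sigma,t}$ into a global decision about $[s,B]$ --- is not a bookkeeping matter but the heart of the problem, and it remains genuinely open in your write-up.

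The paper's argument is organized quite differently and sidesteps this threshold issue entirely. It does not reduce to open sets. Instead, given $[a,M]$, it uses a single semiselective diagonalization $D_{\infty}$ against countably many dense--open sets $\mathcal{D}_{z}$ (indexed by $z\in[\omega]^{<\omega}$), where each $\mathcal{D}_{z}$ records a three-way decision (included / disjoint / undecidable-by-any-refinement) for the tails $\mathcal{X}^{*}_{\sigma}=\bigcup_{Z\in[\sigma]}\bigcap_{n}\mathcal{X}_{r_{n}(Z)}$. Inside $[a,D_{\infty}]$ it then builds auxiliary $\mathcal{I}^{+}$-Ramsey sets $\Psi_{\sigma}$ with $[a,D_{\infty}]\cap\mathcal{X}^{*}_{\sigma}\subseteq\Psi_{\sigma}$, defines the residues $\mathcal{M}_{\sigma}=\Psi_{\sigma}\setminus\bigcup_{\xi>\max\sigma}\Psi_{\sigma\cup\{\xi\}}$, and shows each $\mathcal{M}_{\sigma}$ is $\mathsf{Exp}(\mathcal{I}^{+})$-nowhere dense, hence $\mathcal{I}^{+}$-Ramsey null. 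The hypothesis $\kappa<\mathfrak{h}_{\mathcal{I}}$ is invoked only through Proposition~\ref{Additivity}: once to see that each $\mathcal{M}_{\sigma}$ is $\mathcal{I}^{+}$-Ramsey (as a $\kappa$-sized Boolean combination of the $\Psi$'s), and once to see that $\bigcup_{\sigma\in[\kappa]^{<\omega}}\mathcal{M}_{\sigma}$ is $\mathcal{I}^{+}$-Ramsey null. Since $\Psi_{\emptyset}\setminus\mathcal{A}_{\kappa}(\mathcal{G})\subseteq\bigcup_{\sigma}\mathcal{M}_{\sigma}$, the decision about $\mathcal{A}_{\kappa}(\mathcal{G})$ follows on a further refinement. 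The point is that the paper never needs to control $\kappa$-many almost-inclusion thresholds simultaneously; all the $\kappa$-sized work is pushed into applications of the additivity equality $\textsf{add}(\mathcal{R}(\mathcal{I}))=\textsf{add}(\mathcal{R}_{0}(\mathcal{I}))=\mathfrak{h}_{\mathcal{I}}$, which is exactly the device your reduction step uses but your fusion step abandons.
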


\begin{proof}
	Fix a semiselective ideal $\mathcal{I}$ on $\omega$ and an infinite cardinal $\kappa<\mathfrak{h}_{\mathcal{I}}$, and let $\mathcal{G} = \{ \mathcal{X}_{\sigma} \}_{\sigma\in[\kappa]^{<\omega}} \subseteq \wp([\omega]^{\omega})$ be a collection of $\mathcal{I}^{+}-$Ramsey sets. We aim to prove that the set $\mathcal{A}_{\kappa} (\mathcal{G}) = \bigcup_{A\in [\kappa]^{\omega}} \bigcap_{n\in\omega} \mathcal{X}_{r_{n}(A)}$ is also $\mathcal{I}^{+}-$Ramsey.

    \medskip

	For each $\sigma \in [\kappa]^{<\omega}$ we consider the set $\mathcal{X}_{\sigma}^{*} = \bigcup_{A\in [\sigma]} \bigcap_{n\in \omega} \mathcal{X}_{r_{n}(A)} \subseteq \mathcal{A}_{\kappa} (\mathcal{G})$, so,  in particular $\mathcal{X}_{\emptyset}^{*} = \mathcal{A}_{\kappa} (\mathcal{G})$. Notice that if $\tau \sqsubseteq \sigma$ then $[\sigma] \subseteq [\tau]$ and hence $\mathcal{X}_{\sigma}^{*} \subseteq \mathcal{X}_{\tau}^{*}$; on the other hand, since $\mathcal{X}_{r_{|\sigma|}(A)} = \mathcal{X}_{\sigma}$ whenever $A\in [\sigma]$, then $\mathcal{X}_{\sigma}^{*} \subseteq \mathcal{X}_{\sigma}$ for every $\sigma \in [\kappa]^{<\omega}$; additionally, since $[\sigma]= \bigcup_{\sigma<\xi} [\sigma\cup \{\xi\}]$, then for each $\sigma\in [\kappa]^{<\omega}$ it is true that 
	\begin{center}
		$\mathcal{X}_{\sigma}^{*} = \bigcup_{A\in [\sigma]} \bigcap_{n\in \omega} \mathcal{X}_{r_{n}(A)} = \bigcup_{\sigma<\xi} \bigcup_{A\in [\sigma\cup \{\xi\}]} \bigcap_{n\in \omega} \mathcal{X}_{r_{n}(A)} = \bigcup_{\sigma<\xi} \mathcal{X}_{\sigma\cup\{\xi\}}^{*}$.
	\end{center}

    \smallskip
    
	Fix arbitrary $a\in [\omega]^{<\omega}$ and $M\in \mathcal{I}^{+}$. We begin by considering the sequence $\{ \mathcal{D}_{z} \}_{z\in [\omega]^{<\omega}}$ of subsets of $\mathcal{I}^{+} \!\restriction\! M$ given by $\mathcal{D}_{z} = \mathcal{I}^{+} \!\restriction\! M$, except for each $z\in [\omega/a]^{<\omega}$ with $z\neq \emptyset$ where we define $\mathcal{D}_{z}$ as follows:
	\begin{equation*}
		\mathcal{D}_{z} = \left\{ N \in \mathcal{I}^{+} \!\restriction\! M \, : \, \forall \, \sigma\in [\kappa]^{< |z|}
		\begin{pmatrix}
			[a \cup z, N] \subseteq \mathcal{X}_{\sigma}^{*} \; \; \vee \; \; [a \cup z, N] \cap \mathcal{X}_{\sigma}^{*} = \emptyset \; \; \; \vee \\ (\forall \, P\in \mathcal{I}^{+} \!\restriction\! N) ( \, [a \cup z, P] \not\subseteq \mathcal{X}_{\sigma}^{*} \, \wedge \, [a \cup z, P] \cap \mathcal{X}_{\sigma}^{*} \neq \emptyset \, )
		\end{pmatrix}
		 \right\}.
	\end{equation*}
	
	It is easy to check that the family $\{ \mathcal{D}_{z} \}_{z\in[\omega]^{<\omega}} \subseteq \wp(\mathcal{I}^{+} \!\restriction\! M)$ defined in this way is a sequence of dense--open sets in $(\mathcal{I}^{+} \!\restriction\! M, \subseteq)$.

    \medskip
	
	Since $\mathcal{I}$ is a semiselective ideal on $\omega$, there exists some set $D_{\infty} \in \mathcal{I}^{+} \!\restriction\! M$ such that $D_{\infty}/b \in \mathcal{D}_{b}$ for all $b\in [D_{\infty}]^{<\omega}$. For each $\sigma\in [\kappa]^{<\omega}$, let  $\mathcal{F}_{\sigma} \subseteq [\omega]^{<\omega}$ be the set described by $\mathcal{F}_{\sigma} = \{ b\in [D_{\infty}/a]^{<\omega} : \sigma \in [\kappa]^{<|b|} \wedge [a\cup b, D_{\infty}] \cap \mathcal{X}_{\sigma}^{*} = \emptyset \}$; so, for each $\sigma\in [\kappa]^{<\omega}$ we now consider the set $\Phi_{\sigma} \subseteq [\omega]^{\omega}$ defined by $\Phi_{\sigma} = [a,D_{\infty}] \setminus \bigcup_{b\in \mathcal{F}_{\sigma}} [a\cup b, D_{\infty}]$, which visibly satisfies that $[a,D_{\infty}] \cap \mathcal{X}_{\sigma}^{*} \subseteq \Phi_{\sigma} \subseteq [a,D_{\infty}]$.

    \medskip
	
	Notice that for every $\sigma\in [\kappa]^{<\omega}$ the set $\Phi_{\sigma}$ has the $\mathsf{Exp}(\mathcal{I}^{+})-$Baire property, by virtue of Fact \ref{facts_abstract_topology}; thus, applying Proposition \ref{Semiselective-Ellentuck-Baire}, it follows that $\Phi_{\sigma}$ is a $\mathcal{I}^{+}-$Ramsey set. Now, for each $\sigma\in [\kappa]^{<\omega}$ we consider the $\mathcal{I}^{+}-$Ramsey set $\Psi_{\sigma} \subseteq [\omega]^{\omega}$ given by $\Psi_{\sigma} = \mathcal{X}_{\sigma} \cap \Phi_{\sigma}$, which clearly satisfies that $[a,D_{\infty}] \cap \mathcal{X}_{\sigma}^{*} \subseteq \Psi_{\sigma} \subseteq \Phi_{\sigma} \subseteq [a,D_{\infty}]$.

    \medskip
	
	Next, for each $\sigma\in [\kappa]^{<\omega}$ we define the set $\mathcal{M}_{\sigma} \subseteq [\omega]^{\omega}$ by $\mathcal{M}_{\sigma} = \Psi_{\sigma} \setminus \bigcup_{\sigma <\xi} \Psi_{\sigma\cup \{\xi\}}$, which is a $\mathcal{I}^{+}-$Ramsey set according to Proposition \ref{Additivity} and Corollary \ref{sigma_ideal/sigma_algebra}; thus, the set $\mathcal{M}_{\sigma}$ has the abstract $\mathsf{Exp}(\mathcal{I}^{+})-$Baire property by virtue of Proposition \ref{Semiselective-Ellentuck-Baire}. Furthermore, since $\mathcal{M}_{\sigma} \subseteq [a,D_{\infty}]$ and $[a,D_{\infty}] \cap \mathcal{X}_{\sigma}^{*} = [a,D_{\infty}] \cap \bigcup_{\sigma <\xi} \mathcal{X}_{\sigma\cup\{\xi\}}^{*} \subseteq \bigcup_{\sigma <\xi} \Psi_{\sigma\cup\{\xi\}}$, then it follows that $\mathcal{M}_{\sigma} \cap \mathcal{X}_{\sigma}^{*} = \emptyset$, and hence we conclude that $\mathcal{M}_{\sigma} \subseteq \Psi_{\sigma} \setminus \mathcal{X}_{\sigma}^{*} \subseteq \Phi_{\sigma} \setminus \mathcal{X}_{\sigma}^{*}$. 

    \medskip
	
	Let us check now that $\Psi_{\emptyset} \setminus \mathcal{A}_{\kappa} (\mathcal{G}) \subseteq \bigcup_{\sigma\in [\kappa]^{<\omega}} \mathcal{M}_{\sigma}$, which is naturally equivalent to verifying that $\Psi_{\emptyset} \setminus \bigcup_{\sigma\in [\kappa]^{<\omega}} \mathcal{M}_{\sigma} \subseteq \mathcal{A}_{\kappa} (\mathcal{G})$. Indeed, if $E\in \Psi_{\emptyset} \setminus \bigcup_{\sigma\in [\kappa]^{<\omega}} \mathcal{M}_{\sigma}$ then we can recursively construct a set $A\in [\kappa]^{\omega}$ such that $E\in \Psi_{r_{n}(A)} \subseteq \mathcal{X}_{r_{n}(A)}$ for every $n\in \omega$, thus $E\in \bigcup_{A\in [\kappa]^{\omega}} \bigcap_{n\in\omega} \mathcal{X}_{r_{n}(A)}$ and hence $E\in \mathcal{A}_{\kappa} (\mathcal{G})$. 

    \medskip
	
	Let us verify that for each $\sigma\in [\kappa]^{<\omega}$ it is true that the set with the abstract $\mathsf{Exp}(\mathcal{I}^{+})-$Baire property $\mathcal{M}_{\sigma}$ is itself a $\mathsf{Exp}(\mathcal{I}^{+})-$nowhere dense set. Indeed, in order to reach a contradiction, suppose that for some $\sigma_{0}\in [\kappa]^{<\omega}$ the set $\mathcal{M}_{\sigma_{0}} \subseteq [a,D_{\infty}]$ is not $\mathsf{Exp}(\mathcal{I}^{+})-$nowhere dense, then there are $c\in [\omega]^{<\omega}$ and $Q\in [a,D_{\infty}] \cap \mathcal{I}^{+}$ such that $[c,Q] \subseteq \mathcal{M}_{\sigma_{0}}$ and hence $[c,Q] \subseteq [a,D_{\infty}]$, so that $a \sqsubseteq c$, and for $b = c/a = \{ n\in c : a<n \}$ we infer that $c = a \cup b$ and $b \in [D_{\infty}/a]^{<\omega}$; in fact, we can assume without loss of generality that $\sigma_{0} \in [\kappa]^{<|b|}$. Now, since $[a \cup b, Q] \subseteq \mathcal{M}_{\sigma_{0}} \subseteq \Phi_{\sigma_{0}} \setminus \mathcal{X}_{\sigma_{0}}^{*}$, then clearly we have that $[a \cup b, Q] \subseteq \Phi_{\sigma_{0}}$ and $[a \cup b, Q] \cap \mathcal{X}_{\sigma_{0}}^{*} = \emptyset$; on the other hand, since $[a \cup b, D_{\infty}/b] = [a \cup b, D_{\infty}]$ and $D_{\infty}/b \in \mathcal{D}_{b}$, then either $[a \cup b, D_{\infty}] \subseteq \mathcal{X}_{\sigma_{0}}^{*}$, or $[a \cup b, D_{\infty}] \cap \mathcal{X}_{\sigma_{0}}^{*} = \emptyset$, or $[a \cup b, P] \not\subseteq \mathcal{X}_{\sigma_{0}}^{*}$ and $[a \cup b, P] \cap \mathcal{X}_{\sigma_{0}}^{*} \neq \emptyset$ for all $P \in \mathcal{I}^{+} \!\restriction\! (D_{\infty}/b)$, however both the first and the last of these three options are impossible, because $Q\in [a,D_{\infty}] \cap \mathcal{I}^{+}$ satisfies that $[a \cup b,Q/b] = [a \cup b, Q] \subseteq [a \cup b, D_{\infty}]$ and $[a \cup b, Q] \cap \mathcal{X}_{\sigma_{0}}^{*} = \emptyset$. Therefore, we deduce that $[a \cup b, D_{\infty}] \cap \mathcal{X}_{\sigma_{0}}^{*} = \emptyset$ and consequently $b\in \mathcal{F}_{\sigma_{0}}$, thus $[a \cup b, D_{\infty}] \cap \Phi_{\sigma_{0}} = \emptyset$ and hence $[a \cup b, Q] \cap \Phi_{\sigma_{0}} = \emptyset$, which is contradictory.

    \medskip
	
	Finally, since the set $\mathcal{M}_{\sigma}$ is $\mathsf{Exp}(\mathcal{I}^{+})-$nowhere dense for each $\sigma\in [\kappa]^{<\omega}$, then by Proposition \ref{Semiselective-Ellentuck-meager} we have that for every $\sigma\in [\kappa]^{<\omega}$ the set $\mathcal{M}_{\sigma}$ is itself $\mathcal{I}^{+}-$Ramsey null, so that $\bigcup_{\sigma\in [\kappa]^{<\omega}} \mathcal{M}_{\sigma}$ is also a $\mathcal{I}^{+}-$Ramsey null set, according to Proposition \ref{Additivity}. Therefore, there is some $S\in \mathcal{I}^{+} \!\restriction\! D_{\infty}$ such that $[a,S] \cap \bigcup_{\sigma\in [\kappa]^{<\omega}} \mathcal{M}_{\sigma} = \emptyset$, so that $[a,S] \cap ( \Psi_{\emptyset} \setminus \mathcal{A}_{\kappa} (\mathcal{G}) ) = \emptyset$ due to $\Psi_{\emptyset} \setminus \mathcal{A}_{k} (\mathcal{G}) \subseteq \bigcup_{\sigma\in [\kappa]^{<\omega}} \mathcal{M}_{\sigma}$; moreover, considering that $[a,D_{\infty}] \cap \mathcal{A}_{\kappa} (\mathcal{G}) \subseteq \Psi_{\emptyset} \subseteq [a,D_{\infty}]$, we deduce that $[a,S] \cap \Psi_{\emptyset} = [a,S] \cap \mathcal{A}_{\kappa} (\mathcal{G})$. Thus, since the set $\Psi_{\emptyset}$ is $\mathcal{I}^{+}-$Ramsey, then there exists some $R \in \mathcal{I}^{+} \!\restriction\! S \subseteq \mathcal{I}^{+} \!\restriction\! D_{\infty}$ such that either $[a,R] \subseteq \Psi_{\emptyset}$ or $[a,R] \cap \Psi_{\emptyset} = \emptyset$, which implies that either $[a,R] \subseteq \mathcal{A}_{\kappa} (\mathcal{G})$ or $[a,R] \cap \mathcal{A}_{\kappa} (\mathcal{G}) = \emptyset$. As a result, we conclude that the set $\mathcal{A}_{\kappa} (\mathcal{G})$ is $\mathcal{I}^{+}-$Ramsey. 
\end{proof}

\smallskip

We now present a characterization of semiselective ideals through the Suslin operation $\mathcal{A}_{\omega}$ on the algebra of all sets with the local Ramsey property.

\begin{proposition}[] 
	Let $\mathcal{I}$ be an ideal on $\omega$. Then, the following statements are equivalent:
	\setlist{nolistsep}
	\begin{enumerate}
		\setlength{\itemsep}{0pt}	
		\item[(a)] The ideal $\mathcal{I}$ is semiselective.
		\item[(b)] The collection $\mathcal{R}(\mathcal{I})$ of all $\mathcal{I}^{+}-$Ramsey sets is closed under the Suslin operation $\mathcal{A}_{\omega}$.
	\end{enumerate}
\end{proposition}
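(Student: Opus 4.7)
The direction $(a)\Rightarrow(b)$ is immediate from Proposition~\ref{generalized_Suslin_operation} applied with $\kappa=\omega$, since semiselectivity implies $(p^{w})$ by Proposition~\ref{semiselective=(p^w)+(q^+)} and hence $\mathfrak{h}_{\mathcal{I}}>\omega$ by Fact~\ref{p^w}.

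For the converse $(b)\Rightarrow(a)$, first observe that closure of $\mathcal{R}(\mathcal{I})$ under $\mathcal{A}_{\omega}$ already makes $\mathcal{R}(\mathcal{I})$ a $\sigma$-algebra: specializing the family $\{\mathcal{X}_{\sigma}\}_{\sigma\in[\omega]^{<\omega}}$ so that $\mathcal{X}_\sigma$ depends only on $\min\sigma$ (respectively, only on $|\sigma|$) lets $\mathcal{A}_{\omega}$ compute countable unions (respectively, countable intersections), and $\mathcal{R}(\mathcal{I})$ is trivially closed under complements. A short case analysis on the relative position of $s$ and $t$ shows that every basic metrizable open set $[s]$ is $\mathcal{I}^{+}$-Ramsey, so $\mathcal{R}(\mathcal{I})$ contains all metrizable Borel sets; applying $\mathcal{A}_{\omega}$-closure again yields that every analytic subset of $[\omega]^{\omega}$ is $\mathcal{I}^{+}$-Ramsey.

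Using Proposition~\ref{semiselective=(p^w)+(q^+)}, it remains to verify $(p^{w})$ and $(q)$. For $(q)$: assuming failure at some $A\in\mathcal{I}^{+}$ with partition $A=\bigsqcup_{k\in\omega}f_{k}$ into finite pieces, I would apply the Ramsey dichotomy of the closed (hence $\mathcal{I}^{+}$-Ramsey) set $\mathcal{G}=\{X\in[\omega]^{\omega}:|f_{k}\cap X|\leq 1\text{ for all }k\}$ at $[\emptyset,A]$: the ``inclusion'' alternative yields $B\in\mathcal{G}\cap\mathcal{I}^{+}\!\restriction\! A$, which directly refutes the failure of $(q)$, and the ``disjoint'' alternative is defeated by an infinite selector $X\subseteq B$ picking one element of $B$ from each $f_{k}$ it meets (available since $B$ is infinite and every $f_k$ is finite).

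The property $(p^{w})$ is the main obstacle. My plan is to reduce a hypothetical failure, witnessed by $A\in\mathcal{I}^{+}$ and dense--open $\{\mathcal{D}_{n}\}_{n\in\omega}$, to a $(p)$-style condition by inductively choosing, via density, $Y_{n}\in\mathcal{D}_{n}$ with $Y_{n+1}\subseteq Y_{n}\subseteq A$; then no $B\in\mathcal{I}^{+}\!\restriction\! A$ can satisfy $B\subseteq^{*}Y_{n}$ for all $n$, else $B$ would $(p^{w})$-diagonalize via $A_{n}:=Y_{n}$. I would then apply the Ramsey dichotomy of the closed set $\mathcal{F}=\{X\in[\omega]^{\omega}:\forall n\in X,\ X/n\subseteq Y_{n}\}$ at $[\emptyset,A]$: the ``inclusion'' alternative produces $B\in\mathcal{F}\cap\mathcal{I}^{+}$, and for any $n\in\omega$, choosing $m\in B$ with $m\geq n$ gives $B/m\subseteq Y_{m}\subseteq Y_{n}$, so $B\setminus Y_{n}\subseteq\{0,\ldots,m\}$ is finite and thus $B\subseteq^{*}Y_{n}$ for every $n$, a contradiction. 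The harder ``disjoint'' alternative will be handled by a secondary Ramsey argument on the closed monochromatic sets $\mathcal{M}_{i}$ for the $2$-coloring $\chi(\{n,m\})=0\Leftrightarrow m\in Y_{n}$ (for $n<m$), combined with ordinary Ramsey's theorem on pairs and the already-established $(q)$, ultimately producing an $\mathcal{I}^{+}$-subset of $A$ of ``color $1$'' from which the desired contradiction is extracted via the dense--open structure of $\{\mathcal{D}_{n}\}$.
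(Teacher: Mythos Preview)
Your $(a)\Rightarrow(b)$ direction matches the paper exactly. Your $(q)$ argument in $(b)\Rightarrow(a)$ is correct and is in fact a bit cleaner than the paper's (the paper instead exhibits an explicit open set $\mathcal{Q}$ encoding ``the first two elements lie in the same $f_n$'' and checks by hand that it fails to be $\mathcal{I}^+$--Ramsey).

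The $(p^{w})$ case, however, has a genuine gap. Your plan produces a set $C\in\mathcal{I}^{+}\!\restriction\! A$ that is colour-$1$ monochromatic for $\chi$, i.e.\ $C/n\cap Y_{n}=\emptyset$ for every $n\in C$; but this is \emph{not} in tension with the failure of $(p^{w})$. The failure of $(p^{w})$ only says that no $B\in\mathcal{I}^{+}\!\restriction\! A$ is almost contained in a choice $A_{n}\in\mathcal{D}_{n}$ for each $n$; a set $C$ that is almost \emph{disjoint} from your particular sequence $\{Y_{n}\}$ is perfectly compatible with that. Invoking ``the dense--open structure of $\{\mathcal{D}_{n}\}$'' does not help: density lets you find some $D\in\mathcal{D}_{0}$ with $D\subseteq C$, but this $D$ has no relation to the $Y_{n}$'s and you are back where you started. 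Nor does $(q)$ supply the missing ingredient---there is no natural finite-piece partition in sight whose selector would rescue the argument. The underlying problem is that your reduction fixes a \emph{single} decreasing sequence $Y_{n}$, whereas $(p^{w})$ quantifies over all choices from the $\mathcal{D}_{n}$.

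The paper avoids this by never committing to one sequence: assuming the failure of $*$-semiselectivity (Fact~\ref{p^w}(c)), it defines $\mathcal{D}^{*}_{n}=\{X:\exists D\in\mathcal{D}_{n}\,(X\subseteq^{*}D)\}$ and $\mathcal{C}_{n}=\{X:n\in X\ \wedge\ X/n\notin\mathcal{D}^{*}_{n}\}$, shows each $\mathcal{C}_{n}$ is $\mathcal{I}^{+}$--Ramsey null, and then checks directly that $\bigcup_{n}\mathcal{C}_{n}$ is not $\mathcal{I}^{+}$--Ramsey below $A$; the Suslin operation is then used only to recast this countable union as $\mathcal{A}_{\omega}(\mathcal{G})^{\complement}$. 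The key point is that the definition of $\mathcal{C}_{n}$ refers to \emph{all} of $\mathcal{D}_{n}$, not to a single $Y_{n}$.
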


\begin{proof}
	$[\text{(a)} \Longrightarrow \text{(b)}].$ If the ideal $\mathcal{I}$ is semiselective, then $\mathfrak{h}_{\mathcal{I}} > \omega$ by virtue of Proposition \ref{semiselective=(p^w)+(q^+)} and Fact \ref{p^w}, hence the collection $\mathcal{R}(\mathcal{I})$ of all $\mathcal{I}^{+}-$Ramsey sets is closed under the Suslin operation $\mathcal{A}_{\omega}$, according to Proposition \ref{generalized_Suslin_operation}.

    \medskip
	
	$[\text{(b)} \Longrightarrow \text{(a)}].$ If the ideal $\mathcal{I}$ is not semiselective, then either $\mathcal{I}$ fails to satisfy the property $(p^{w})$ or $\mathcal{I}$ fails to satisfy the property $(q)$, according to Proposition \ref{semiselective=(p^w)+(q^+)}.

    \medskip
	
	On the one hand, suppose that the ideal $\mathcal{I}$ does not have the property $(p^{w})$. By Fact \ref{p^w}, let $A\in \mathcal{I}^{+}$ and let $\{ \mathcal{D}_{n} \}_{n\in\omega} \subseteq \wp (\mathcal{I}^{+})$ be a sequence of dense--open sets in $(\mathcal{I}^{+}, \subseteq^{*})$ such that for every $E\in \mathcal{I}^{+}$, with $E\subseteq^{*} A$, there is some $n\in E$ for which $E/n \notin \mathcal{D}_{n}$. Now, for every $n\in\omega$ we consider the set $\mathcal{C}_{n} \subseteq [\omega]^{\omega}$ given by $\mathcal{C}_{n} = \{ X\in [\omega]^{\omega} : X/n \notin \mathcal{D}_{n}^{*} \}$, where $\mathcal{D}_{n}^{*} = \{ X\in [\omega]^{\omega} : (\exists\, D\in \mathcal{D}_{n}) (X \subseteq^{*} D) \}$, so that $\mathcal{D}_{n} = \mathcal{I}^{+} \cap \mathcal{D}_{n}^{*}$. 

    \medskip
	
	Notice that $\{ \mathcal{C}_{n} \}_{n\in\omega} \subseteq \wp([\omega]^{\omega})$ is a collection of $\mathcal{I}^{+}-$Ramsey null sets, because for every $n\in\omega$ and every basic set $[s,M]$ with $M\in\mathcal{I}^{+}$, there exists $N\in \mathcal{I}^{+} \!\restriction\! M$ such that $N \in \mathcal{D}_{n}$ and hence $\{ X\in [\omega]^{\omega} : X\subseteq^{*} N \} \subseteq \mathcal{D}_{n}^{*}$, in particular $X/n \in \mathcal{D}_{n}^{*}$ for all infinite set $X\subseteq^{*} N$, therefore $[s,N] \cap \mathcal{C}_{n} = \emptyset$. Nevertheless, the set $\bigcup_{n\in\omega} \mathcal{C}_{n}$ is not $\mathcal{I}^{+}-$Ramsey, since for every $B \in \mathcal{I}^{+} \!\restriction\! A$ there is $n\in B$ such that $B/n \notin \mathcal{D}_{n}$, in fact $B/n \notin \mathcal{D}_{n}^{*}$, so that $B\in \mathcal{C}_{n}$ and hence $[B]^{\omega} \cap \bigcup_{n\in\omega} \mathcal{C}_{n} \neq \emptyset$; moreover, for any sequence $\{B_{n}\}_{n\in\omega} \subseteq \mathcal{I}^{+} \!\restriction\! B$ satisfying $B_{n}\in \mathcal{D}_{n}$ and $B_{n+1} \subseteq^{*} B_{n}$ for each $n\in\omega$, there is $E\in [B]^{\omega}$ such that for each $n\in\omega$ we have $E/n \subseteq^{*} B_{n}$ and hence $E/n \in \mathcal{D}_{n}^{*}$, which implies that $E\in \bigcap_{n\in\omega} \mathcal{C}_{n}^{\complement}$, and thus $[B]^{\omega} \cap (\bigcup_{n\in\omega} \mathcal{C}_{n})^{\complement} \neq \emptyset$.
    
    \medskip
	
	Lastly, we consider the family of $\mathcal{I}^{+}-$Ramsey sets $\mathcal{G} = \{ \mathcal{X}_{s} \}_{s\in [\omega]^{<\omega}} \subseteq \wp([\omega]^{\omega})$ defined by:
	\begin{center}
		$\mathcal{X}_{s} = \mathcal{C}_{n}^{\complement}$ whenever $s\in [\omega]^{n}$.
	\end{center}
	Therefore, applying the Suslin operation $\mathcal{A}_{\omega}$ to the collection $\mathcal{G}$, we deduce that $\mathcal{A}_{\omega} (\mathcal{G}) = \bigcup_{Z\in [\omega]^{\omega}} \bigcap_{n\in\omega} \mathcal{X}_{r_{n}(Z)} = ( \bigcup_{n\in\omega} \mathcal{C}_{n} )^{\complement}$ is not a $\mathcal{I}^{+}-$Ramsey set.

    \medskip
	
	On the other hand, suppose that the ideal $\mathcal{I}$ does not have the property $(q)$. Let $A\in\mathcal{I}^{+}$ and let $A = \bigcup_{n\in\omega} f_{n}$ be an infinite partition of $A$, where each $f_{n}$ is finite, such that for every $B\in \mathcal{I}^{+} \!\restriction\! A$ there is some $n\in\omega$ such that $f_{n}\cap B$ has at least two elements. 

    \medskip
	
	Next, we consider the set $\mathcal{Q} \subseteq [\omega]^{\omega}$ given by $\mathcal{Q} = \{ M\in [\omega]^{\omega} : (\exists\, n\in\omega)( \{m_{0},m_{1}\} \subseteq f_{n} ) \}$, where $m_{0} = \min M$ and $m_{1} = \min (M/ \min M)$. Now, notice that the set $\mathcal{Q}$ is not $\mathcal{I}^{+}-$Ramsey, since for every $B\in \mathcal{I}^{+} \!\restriction\! A$ there are $n\in\omega$ and $a,a^{\prime} \in A$ such that $a \neq a^{\prime}$ and $\{a,a^{\prime}\} \subseteq f_{n}\cap B$; thus, if $\ell = \max (f_{n}\cap B)$ then $\{a,a^{\prime}\} \cup (B/\ell) \in \mathcal{Q}$ and $\{\ell\}\cup(B/\ell) \notin \mathcal{Q}$, hence we have that $[B]^{\omega} \cap \mathcal{Q} \neq \emptyset$ and $[B]^{\omega} \cap \mathcal{Q}^{\complement} \neq \emptyset$.

    \medskip
	
	Finally, we consider the family of $\mathcal{I}^{+}-$Ramsey sets $\mathcal{G} = \{ \mathcal{X}_{s} \}_{s\in [\omega]^{<\omega}} \subseteq \wp([\omega]^{\omega})$ defined by:
	\begin{center}
		$\mathcal{X}_{s} =
		\begin{cases}
			[\omega]^{\omega} & \text{ if } s \notin [\omega]^{2} \\  
			[s] & \text{ if } s \in [\omega]^{2} \text{ and } s\in \bigcup_{n\in\omega} [f_{n}]^{2} \\
			\emptyset & \text{ if } s \in [\omega]^{2} \text{ and } s\notin \bigcup_{n\in\omega} [f_{n}]^{2}
		\end{cases}$.
	\end{center}
	Therefore, applying the Suslin operation $\mathcal{A}_{\omega}$ to the collection $\mathcal{G}$, we conclude that $\mathcal{A}_{\omega} (\mathcal{G}) = \bigcup_{Z\in [\omega]^{\omega}} \bigcap_{n\in\omega} \mathcal{X}_{r_{n}(Z)} = \mathcal{Q}$ is not a $\mathcal{I}^{+}-$Ramsey set. 	
\end{proof}

\section{A semiselective coideal containing no selective ultrafilter}\label{chandSuslin}

Denote by $\mathrm{CH}$ the continuum hypothesis, and by $\neg \mathrm{SH}$ the negation of the Suslin hypothesis; thus, $\mathrm{CH}$ is the statement ``$\mathfrak{c} = \aleph_{1}$'', and $\neg \mathrm{SH}$ is the statement ``there is a Suslin tree''. With respect to the concepts and techniques used in this section, we suggest the reader to see, for example, \cite{Bartoszynski-Judah, Halbeisen, Jech(Book), Kunen(Book)}.

\medskip

At this point, it is worth recalling that a non--principal ultrafilter $\mathcal{U}$ on $\omega$ is \textit{selective} if it is a maximal selective coideal. Equivalently, $\mathcal{U}$ is selective if for every infinite partition $\omega = \bigcup_{n\in\omega} P_{n}$, with each $P_{n} \notin \mathcal{U}$, there is $X\in \mathcal{U}$ such that $|X \cap P_{n}| \leq 1$ for each $n\in \omega$. 

\medskip

Similarly, it is useful to recall that a non--principal ultrafilter $\mathcal U$ on $\omega$ is a \textit{p--point} if it is a maximal coideal satisfying property $(p)$, that is, $\mathcal U$ is a p--point if for every countable descending chain $\{A_n\}_{n\in \omega}$ of elements of $\mathcal U$ there is $A \in \mathcal U$ such that $A\subseteq^* A_n$ for each $n\in \omega$. In particular, every selective ultrafilter is a p--point.

\medskip

While any decreasing sequence of elements of a selective coideal has a diagonalization in the coideal, in the case of a semiselective coideal $\mathcal{I}^{+}$, we only have that for any sequence $\{\mathcal{D}_{n}\}_{n\in\omega}$ of dense--open sets in $(\mathcal{I}^{+}, \subseteq)$, there is at least one decreasing sequence $\{A_{n}\}_{n\in\omega}$ of elements of $\mathcal{I}^{+}$, such that $A_{n}\in \mathcal{D}_{n}$ for every $n\in \omega$, which has a diagonalization in the coideal $\mathcal{I}^{+}$. This difference is important when investigating under which conditions semiselective coideals contain selective ultrafilters. 

\medskip

In \cite[Proposition 0.11]{Mathias}, Mathias proved that $\mathrm{CH}$ implies that every selective coideal contains a selective ultrafilter, and we include a proof of this fact to illustrate the difference between selectivity and semiselectivity for ideals.

\begin{proposition}[]
(Mathias, \cite{Mathias}). Suppose $\textrm{CH}$ holds. Then, every selective coideal on $\omega$ contains a selective ultrafilter.
\end{proposition}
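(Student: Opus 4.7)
The plan is to carry out a transfinite recursion of length $\omega_{1}$ using $\mathrm{CH}$. Let $\mathcal{I}$ be a selective ideal on $\omega$ and let $\mathcal{I}^{+}$ be its coideal. Since $\mathrm{CH}$ gives $|\wp(\omega)| = \aleph_{1}$, I enumerate both $\wp(\omega) = \{A_{\alpha} : \alpha < \omega_{1}\}$ and the collection of all partitions of $\omega$ into finite pieces as $\{\pi_{\alpha} : \alpha < \omega_{1}\}$. The goal is to build a $\subseteq^{*}$-decreasing sequence $\{X_{\alpha}\}_{\alpha<\omega_{1}}$ in $\mathcal{I}^{+}$ that decides every subset and diagonalizes every partition, and then to let $\mathcal{U} = \{A\subseteq \omega : \exists\,\alpha<\omega_{1}\ X_{\alpha} \subseteq^{*} A\}$.

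First I would set $X_{0} = \omega$. At a successor step $\alpha+1$, assume $X_{\alpha} \in \mathcal{I}^{+}$ has been constructed with $X_{\alpha} \subseteq^{*} X_{\beta}$ for every $\beta \leq \alpha$. Since $X_{\alpha} = (X_{\alpha}\cap A_{\alpha}) \cup (X_{\alpha}\setminus A_{\alpha})$ and ideals are closed under finite unions, at least one of these two sets lies in $\mathcal{I}^{+}$; call it $Y_{\alpha}$. Then apply property $(q)$ of the selective ideal $\mathcal{I}$ (see Proposition \ref{selective=(p)+(q)}) to $Y_{\alpha}$ and the partition $\pi_{\alpha}$ to obtain $X_{\alpha+1} \in \mathcal{I}^{+}\!\restriction\! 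Y_{\alpha}$ meeting each piece of $\pi_{\alpha}$ in at most one point. By construction, $X_{\alpha+1}\in \mathcal{I}^{+}$, $X_{\alpha+1}\subseteq X_{\alpha}$, and $X_{\alpha+1}$ simultaneously decides $A_{\alpha}$ and selects $\pi_{\alpha}$.

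The delicate step is the limit stage $\lambda < \omega_{1}$. I pick a strictly increasing cofinal sequence $\{\alpha_{n}\}_{n\in \omega}$ in $\lambda$, so that $\{X_{\alpha_{n}}\}_{n\in \omega}$ is a $\subseteq^{*}$-decreasing sequence in $\mathcal{I}^{+}$. By routinely subtracting a finite set from each $X_{\alpha_{n}}$ (which does not change membership in $\mathcal{I}^{+}$), I may assume the sequence is honestly $\subseteq$-decreasing, and then property $(p)$ of the selective ideal $\mathcal{I}$ (again via Proposition \ref{selective=(p)+(q)}) furnishes $X_{\lambda} \in \mathcal{I}^{+}$ with $X_{\lambda} \subseteq^{*} X_{\alpha_{n}}$ for every $n \in \omega$; this $X_{\lambda}$ then satisfies $X_{\lambda} \subseteq^{*} X_{\beta}$ for every $\beta < \lambda$. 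This is the main obstacle in the argument, and it is precisely where selectivity of $\mathcal{I}$ (rather than mere semiselectivity) is used; selectivity is equivalent to $(p) + (q)$, and both halves are needed at the two kinds of stages.

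Finally, I verify that $\mathcal{U}$ has the required properties. Since each $X_{\alpha} \in \mathcal{I}^{+}$ and $\mathcal{I}$ is closed under subsets modulo finite sets, every element of $\mathcal{U}$ belongs to $\mathcal{I}^{+}$, so $\mathcal{U} \subseteq \mathcal{I}^{+}$. For every $A \subseteq \omega$, writing $A = A_{\alpha}$, the successor step $\alpha+1$ ensures $X_{\alpha+1} \subseteq^{*} A$ or $X_{\alpha+1} \subseteq^{*} \omega\setminus A$, so $\mathcal{U}$ is an ultrafilter. For property $(q)$ of $\mathcal{U}$, any partition equals some $\pi_{\alpha}$ and $X_{\alpha+1}\in \mathcal{U}$ is a selector. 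For property $(p)$, given a decreasing sequence $\{B_{n}\}_{n\in\omega} \subseteq \mathcal{U}$, choose $\alpha_{n}$ with $X_{\alpha_{n}} \subseteq^{*} B_{n}$ and let $\alpha = \sup_{n} \alpha_{n} < \omega_{1}$; then $X_{\alpha} \subseteq^{*} X_{\alpha_{n}} \subseteq^{*} B_{n}$ for every $n$, providing the pseudo-intersection. By Proposition \ref{selective=(p)+(q)} applied to the maximal ideal dual to $\mathcal{U}$, this establishes that $\mathcal{U}$ is a selective ultrafilter contained in $\mathcal{I}^{+}$.
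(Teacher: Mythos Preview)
Your proof is correct and follows essentially the same transfinite recursion as the paper: build a $\subseteq^{*}$-decreasing $\omega_{1}$-sequence in $\mathcal{I}^{+}$ using $\mathrm{CH}$ for the bookkeeping, invoking property $(p)$ at limit stages and property $(q)$ at successor stages, and take the generated filter. The only organizational difference is that the paper enumerates \emph{all} partitions of $\omega$ and performs a small case split at each successor step (either some piece meets $X_{\alpha}$ positively, or all pieces meet it in $\mathcal{I}$, in which case $(p)$ followed by $(q)$ yields a selector), thereby verifying the selectivity of $\mathcal{U}$ directly from the definition; you instead enumerate subsets and finite-piece partitions separately, avoid the case split, and deduce selectivity at the end via $(p)+(q)$ and Proposition~\ref{selective=(p)+(q)}. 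Both routes are standard and equally valid.
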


\begin{proof}
Let $\mathcal{I}$ be a selective ideal on $\omega$, so that $\mathcal{I}$ satisfies the properties $(p)$ and $(q)$ stated in Proposition \ref{selective=(p)+(q)}. Using $\textrm{CH}$, list all partitions of $\omega$ as $\{\mathcal A_\alpha : \alpha \in\omega_1\}$, and for each $\alpha\in\omega_1$, let $\mathcal{A}_{\alpha} = \{ A^{\alpha}_{n} : n\in d_{\alpha} \}$ where $d_{\alpha}\leq \omega$.

\medskip

We recursively construct an uncountable decreasing sequence $\{X_\alpha\}_{\alpha\in\omega_1}$ on $(\mathcal I^+, \subseteq^*)$, putting initially $X_0=\omega$. Now, let $\alpha \in \omega_1$ and suppose that we have defined $X_\alpha \in \mathcal{I}^{+}$. If $X_\alpha\cap A^\alpha_n\in \mathcal I^+$ for some $n\in d_{\alpha}$, put $X_{\alpha+1}= X_\alpha\cap A^\alpha_n$. Otherwise, $\mathcal A_{\alpha}$ must be a partition into infinitely many pieces and $X_\alpha\cap A^\alpha_n\in \mathcal{I}$ for all  $n\in \omega$. Consequently, we have that $\{ X_{\alpha} \setminus \bigcup_{k=0}^{n} A^{\alpha}_{k} \}_{n\in\omega}$ is a decreasing sequence of elements of $\mathcal{I}^{+} \!\restriction\! X_{\alpha}$, then by property $(p)$, there exists some $B\in \mathcal I^+ \!\restriction\! X_{\alpha}$ such that $B \subseteq^{*} X_{\alpha} \setminus \bigcup_{k=0}^{n} A^{\alpha}_{k}$ for each $n\in\omega$; thus, $B \cap A^{\alpha}_{n}$ is a finite set for every $n\in\omega$. By property $(q)$, let $X_{\alpha+1}\in \mathcal I^+ \!\restriction\! B \subseteq  \mathcal I^+ \!\restriction\! X_{\alpha}$ be such that $|X_{\alpha+1}\cap A^\alpha_n|\leq 1$ for each $n\in\omega$. Finally, let $\lambda \in \omega_1 $ be a countable limit ordinal and suppose that the decreasing sequence $\{X_\alpha\}_{\alpha\in\lambda}$ on $(\mathcal I^+, \subseteq^*)$ has been defined. By property $(p)$, let $X_\lambda \in \mathcal{I}^{+}$ be such that $X_\lambda \subseteq^* X_\alpha$ for all $\alpha\in\lambda$.

\medskip

Next, let $\mathcal{U}$ be the filter on $\omega$ generated by the sequence $\{ X_{\alpha} \}_{\alpha\in \omega_{1}} \subseteq \mathcal{I}^{+}$, so that $\mathcal{U} = \{X\in [\omega]^{\omega} : (\exists\, \alpha\in \omega_1) ( X_\alpha \subseteq^{*} X)\}$ and hence $\mathcal{U} \subseteq \mathcal{I}^{+}$. Notice that $\mathcal{U}$ is an ultrafilter, since for every $A\subseteq \omega$ the pair $\{A, \omega\setminus A\}$ is a partition $\mathcal{A}_{\alpha}$ of $\omega$ that was considered at some stage $\alpha\in\omega_{1}$ of the construction, then either $X_{\alpha+1} \subseteq A$ or $X_{\alpha+1} \subseteq \omega\setminus A$. Additionally, the ultrafilter $\mathcal U$ is selective, since by construction every partition of $\omega$ into infinitely many pieces outside $\mathcal{U}$ is of the form $\mathcal A_\alpha$ for some $\alpha\in\omega_{1}$, thus $X_{\alpha} \cap A^\alpha_n \in \mathcal{I}$ for all $n\in\omega$, and hence $X_{\alpha+1}\in \mathcal U$ satisfies that $|X_{\alpha+1}\cap A^\alpha_n|\leq 1$ for each $n\in \omega$.
\end{proof}

\smallskip

Let us remember some concepts related to trees (see, for example, \cite[Chapter 3]{Kunen(Book)}). An  $\omega_1-$tree is a tree of height $\omega_1$ such that all of its levels are countable. A \textit{Suslin tree} is an $\omega_1-$tree in which all chains and antichains are countable. A tree is rooted if it has a single element in its first level. Suslin's Hypothesis $\mathrm{SH}$ can be formulated as  ``there are no Suslin trees''.  So, $\neg \mathrm{SH}$ stands for ``there is a Suslin tree''.  A Suslin tree $T$ is well pruned if it is rooted and for every $t\in T$ the collection of all elements of the tree above $t$ is uncountable. The existence of well pruned Suslin trees follows from $\neg \mathrm{SH}$; thus, if there exists a Suslin tree then there is a well pruned one. 

\medskip

We will now show that, under $\mathrm{CH}$ and $\neg \mathrm{SH}$, there is a semiselective coideal that does not contain a p--point, and hence it does not contain a selective ultrafilter. Therefore, in $L$ not every semiselective coideal contains a selective ultrafilter.

\begin{theorem}\label{seminoselultra}
Suppose $\mathrm{CH}$ holds together with $\neg \mathrm{SH}$. Then, there exists a semiselective coideal on $\omega$ that does not contain a p--point. 
\end{theorem}

\begin{proof}
Assume $\mathrm{CH}$ and let $(\mathcal{S}, \leq)$ be a well pruned Suslin tree. First of all, suppose there exists an order embedding
\begin{center}
	$\varphi: (\mathcal{S}, \leq) \longrightarrow  ([\omega]^\omega, ^*\!\supseteq)$
\end{center}  
satisfying the following conditions:
\setlist{nolistsep}
\begin{enumerate}
	\setlength{\itemsep}{0pt}
	\item[(1)] For every $s\in \mathcal S$ and every finite partition $\varphi(s) = X_0\cup X_1 $, there is $t\in \mathcal S$ such that $t\geq s$ and $\varphi(t) \subseteq^* X_i$ for some $i\in\{0,1\}$. 
	\item[(2)] For every $s\in \mathcal S$ and every infinite partition $\varphi(s) = \bigcup_{n\in \omega} f_n$, with each $f_n$ finite, there is $t\in \mathcal S$ such that $t\geq s $ and $|\varphi(t) \cap f_n|\leq 1$ for each $n\in \omega$.
\end{enumerate}

\medskip

It is important to specify that by an \textit{order embedding} $\varphi: (\mathcal{S}, \leq) \rightarrow ([\omega]^\omega, ^*\!\supseteq)$ we mean that $\varphi$ is an injective function satisfying that $s\leq t$ if and only if $\varphi(t) \subseteq^{*} \varphi(s)$ for all $s,t \in \mathcal{S}$.

\medskip

Consider the collection $\mathcal{I} =\{ A\subseteq \omega : (\forall\, s\in \mathcal{S}) (\varphi(s) \not\subseteq^{*} A) \}$. We show that its complement
\begin{center}
	$\mathcal{I}^{+} = \{ A \subseteq \omega : (\exists\, s \in \mathcal{S}) (\varphi(s)\subseteq^{*} A) \}$
\end{center}  
is a semiselective coideal on $\omega$ such that it does not contain a p--point.

\medskip

It is easy to verify that $\mathcal I^+$ is a coideal on $\omega$. Clearly, $\mathcal I^+$ is a non--empty family of infinite subsets of $\omega$ such that it is closed under taking supersets. Moreover, by condition $(1)$ of $\varphi$, we deduce that if $A\cup B \in \mathcal{I}^{+}$ then either $A\in \mathcal{I}^{+}$ or $B\in\mathcal{I}^{+}$. 

\medskip

Let us check that the coideal $\mathcal I^+$ is semiselective using Proposition \ref{semiselective=(p^w)+(q^+)}. Indeed, by condition $(2)$ of $\varphi$, clearly $\mathcal I^+$ satisfies property $(q)$. To show that it also satisfies property $(p^w)$, or equivalently property $*(p^w)$ according to Fact \ref{p^w}, notice that the image under $\varphi$ of a maximal antichain of the tree $\mathcal S$ is also a maximal antichain in $\varphi[\mathcal S]$, the image of $\mathcal S$ under $\varphi$. If $\mathcal A$ is a maximal antichain in $\mathcal I^+$, then the set 
$\{X\in \mathcal I^+ : (\exists\, A\in \mathcal A) (X\subseteq^* A)\}$ is a dense--open subset of $\mathcal I^+$, and thus,   property $(p^w)$ can be formulated  in terms of antichains in the obvious way. Now, given a countable collection $\mathcal C$ of maximal antichains of the tree $\mathcal S$, since each antichain of $\mathcal S$ is countable, there is a level of the tree $\mathcal S$ which is above all the antichains in $\mathcal C$. Let $s$ be any element of $\mathcal S$ in this level, then for each antichain in the collection $\mathcal C$ we have that $\varphi(s)$ is below $\varphi(s')$ for a unique $s'$ in this antichain. Therefore, $\varphi(s)$ is a pseudo intersection of the collection of antichains of $\varphi[\mathcal S]$.

\medskip

Let us now show that the semiselective coideal $\mathcal I^+$ does not contain a p--point. Suppose, to reach a contradiction, that $\mathcal U$ is a p--point contained in $\mathcal I^+$. Then $\mathcal U$ has a filter base of sets of the form $\varphi(s)$ with $s\in \mathcal S$. If $s$ and $t$ are elements of $\mathcal S$ such that $\varphi(s)$ and $\varphi(t)$ are members of the filter base, then $s$ and $t$ are comparable in $\mathcal S$, since otherwise there would be almost disjoint elements of $\mathcal U$, which is impossible. Thus, the set $\mathcal C=\{s\in \mathcal S : \varphi(s) \text{  belongs to the filter base of } \mathcal U\}$  is a chain in $\mathcal S$.  Since  $\mathcal U$ is a p--point, any countable descending chain of elements of $\mathcal{U}$ can be extended in $\mathcal{U}$. Therefore, the chain $\mathcal C$ is not countable, but this contradicts that $\mathcal S$ is a Suslin tree.

\medskip

It remains to show how such an order embedding $\varphi: (\mathcal{S}, \leq) \rightarrow ([\omega]^\omega, ^*\!\supseteq)$ satisfying conditions $(1)$ and $(2)$ can be constructed. Using $\mathrm{CH}$, for every $A\in [\omega]^\omega$, let $\{ A=A_{0,\xi} \cup A_{1,\xi} :\xi \in\omega_1 \}$ be an enumeration of all the finite partitions of $A$ into two infinite subsets in which every such  partition is listed uncountably many times; similarly, for every $A\in [\omega]^\omega$, let  $\{ A= \bigcup_{j\in\omega} f^{A}_{j, \xi} : \xi\in\omega_1\}$ list all the infinite partitions of $A$ into finite pieces, each partition appearing uncountably many times in the list.

\medskip

The order embedding $\varphi: (\mathcal{S}, \leq) \rightarrow ([\omega]^\omega, ^*\!\supseteq)$ will be defined by induction on the levels of the Suslin tree $\mathcal S$. We can assume that in $\mathcal S$ each node has a countably infinite set of immediate successors; also, we will use that if $\alpha\in \beta\in \omega_1$, for every node $s$ at level $\alpha$ of the tree $\mathcal S$ there is a node $t$ at level $\beta$ of $\mathcal{S}$ such that $s<t$. 

\medskip 
 
If $r$ is the root of the tree $\mathcal S$, then we put $\varphi(r)=\omega$. Now, let $\alpha\in\omega_1$ and suppose $\varphi(s)$ has been defined for all $s\in \mathcal S$ of level $\leq \alpha$. List all the nodes belonging to the $\alpha$--th level of $\mathcal S$ as $\{t_n: n\in\omega\}$, so we will define $\varphi$ on the immediate successors of each node $t_n$ by induction on $n\in\omega$. 

\medskip

To define $\varphi$ on the immediate successors of $t_0$, we split $\varphi(t_0)$ into $\omega$--many pairwise disjoint infinite subsets $\{A_k : k\in\omega\}$, so that the following holds: for each $s\in\mathcal S$ with $s \leq t_0$, there are $k,k^\prime \in \omega$, with $k\neq k^\prime$, such that $A_k$ is contained in one of the parts of the partition $\varphi(s) = \varphi(s)_{0,\alpha} \cup \varphi(s) _{1,\alpha}$, and $A_{k^\prime}$ is a selector for the partition $\varphi(s)= \bigcup_{j\in\omega} f^{\varphi(s)}_{j, \alpha}$, that is, $|A_{k^\prime} \cap f^{\varphi(s)}_{j, \alpha}| \leq 1$ for each $j\in \omega$. This can be done as follows:

\medskip

List as $\{s_k : k\in\delta\}$, with $\delta \leq \omega$, the collection of all the nodes $s\in \mathcal S$ such that $s \leq t_0$; then, since $\varphi(t_0) \subseteq^* \varphi(s_0)$, there is an infinite set $A_0\subseteq \varphi(t_0)$ such that $\varphi(t_0)\setminus A_0$ is also infinite and either $A_0\subseteq \varphi(s_0)_{0,\alpha}$ or $A_0\subseteq \varphi(s_0)_{1,\alpha}$; likewise, there is an infinite set $A_1\subseteq \varphi(t_0)\setminus A_0$ such that $\varphi(t_0)\setminus (A_0\cup A_1)$ is also infinite and $A_1$ is a selector for the partition $\varphi(s_0)=\bigcup_{j\in\omega} f_{j, \alpha}^{\varphi(s_0)}$. Now, repeat the process for $s_1$ and $\varphi(t_0)\setminus (A_0\cup A_1)$ to obtain infinite sets $A_2$ and $A_3$ such that $A_2\subseteq \varphi(t_0)\setminus (A_0\cup A_1)$, $\varphi(t_0)\setminus (A_0\cup A_1\cup A_2)$ is infinite, and $A_2\subseteq \varphi(s_1)_{0,\alpha}$ or $A_2\subseteq \varphi(s_1)_{1,\alpha}$; also, $A_3\subseteq \varphi(t_0)\setminus (A_0\cup A_1\cup A_2)$, $\varphi(t_0)\setminus (A_0\cup A_1\cup A_2\cup A_3)$ is infinite, and $A_3$ is a selector for the partition $\varphi(s_1)= \bigcup_{j\in\omega} f^{\varphi(s_1)}_{j,\alpha}$. If the sequence $\{s_k: k\in \delta\}$ is infinite, continuing this way we get a family $\{A_k : k\in\omega\}$ of pairwise disjoint subsets of $\varphi(t_0)$; so, the $\omega$--many immediate successors of $t_0$ in the tree $\mathcal S$ are then sent by $\varphi$ to these $\omega$--many subsets of $\varphi(t_0)$. If the sequence $\{s_k: k\in \delta\}$ is finite, once we obtain the sets $A_i$ for $i\in 2\delta$ with $\delta <\omega$, partition the remaining infinite subset of $\varphi(t_0)$ into infinitely many infinite subsets to complete the collection $\{A_k : k\in\omega\}$.

\medskip
 
If $\varphi$ has been defined on the immediate successors of each node $t_0, \ldots, t_n$, then to define $\varphi$ on the immediate successors of $t_{n+1}$, we repeat the previous process with $t_{n+1}$ and the list $\{s_k: k\in \delta\}$, with $\delta\leq \omega$, consisting of all the nodes $s\in \mathcal S$ such that $s \leq t_{n+1}$ and $s\not\leq t_m$ for any $m\in \{0, \ldots, n\}$. 

\medskip 
 
Finally, let $\lambda\in\omega_1$ be a countable limit ordinal, and suppose $\varphi(s)$ has been defined for all $s\in \mathcal S$ of level $<\lambda$. Then, if $t$ is a node in the $\lambda$--th level of $\mathcal S$, define $\varphi(t)$ as a pseudo intersection of the countable collection $\{ \varphi(s) : s<t \}$, that is, $\varphi(t) \subseteq^{*} \varphi(s)$ for every $s<t$. In this way, the induction on the levels of the Suslin tree $\mathcal S$ is completed, and we have constructed an order embedding $\varphi: (\mathcal{S}, \leq) \rightarrow ([\omega]^\omega, ^*\!\supseteq)$ satisfying conditions $(1)$ and $(2)$.
\end{proof}

\smallskip

\begin{theorem} \label{coro_seminoselultra}
    Suppose $\mathrm{CH}$ holds together with $\neg \mathrm{SH}$. Then, there exists a semiselective coideal on $\omega$ that does not contain a selective ultrafilter. 
\end{theorem}

\begin{proof}
    This result is a straightforward consequence of Theorem \ref{seminoselultra} together with the well-known fact that every selective ultrafilter is a p--point.
\end{proof}

\section{Every semiselective coideal contains a selective ultrafilter}\label{everysemiselective}

In this section, we show that it is consistent with $\textrm{ZFC}$ that every semiselective coideal on $\omega$ contains a selective ultrafilter. For this section, some familiarity with the main concepts and ideas of forcing will be convenient, and we suggest the reader to see, for example, \cite{Bartoszynski-Judah, Halbeisen, Jech(Book), Kunen(Book), Shelah-pi}. 

\smallskip

\begin{theorem}\label{semiselcontainsselultra}
It is consistent with $\mathrm{ZFC}$ that for every semiselective ideal $\mathcal I$ on $\omega$, the coideal $\mathcal I^+$ contains a selective ultrafilter.
\end{theorem}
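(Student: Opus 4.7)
The plan is to force over a model $V$ of $\mathrm{ZFC}+\mathrm{GCH}$ (e.g.\ $V=L$) with a countable support iteration $\langle P_\alpha, \dot{Q}_\alpha : \alpha < \omega_2 \rangle$ of proper forcings. At each stage $\alpha < \omega_2$, a bookkeeping device selects a $P_\alpha$-name $\dot{\mathcal{I}}_\alpha$ that $P_\alpha$ forces to be a semiselective ideal on $\omega$, and sets $\dot{Q}_\alpha$ to be the Mathias forcing $\mathbb{M}_{\dot{\mathcal{I}}_\alpha^+}$ relative to its coideal. Here $\mathbb{M}_{\mathcal{H}}$ has conditions $(s,A)$ with $s\in[\omega]^{<\omega}$, $A\in\mathcal{H}$ and $s<\min A$, ordered by $(t,B)\le(s,A)$ iff $s\sqsubseteq t$, $t\setminus s\subseteq A$ and $B\subseteq A$.

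The first step is to establish, for a semiselective coideal $\mathcal{H}=\mathcal{I}^+$, the central forcing properties of $\mathbb{M}_{\mathcal{H}}$: (i) $\mathbb{M}_{\mathcal{H}}$ is proper and $\omega^\omega$-bounding, via a fusion argument using the property $(p^w)$ of Proposition~\ref{semiselective=(p^w)+(q^+)}; (ii) $\mathbb{M}_{\mathcal{H}}$ has the pure decision property, proved by diagonalizing through the dense--open sets on $(\mathcal{I}^+,\subseteq)$ that witness ``the generic decides $\dot{X}$'', essentially an application of Lemma~\ref{Semiselective_Matet_Lemma} together with $(p^w)$; (iii) the Mathias generic $m$ generates a selective ultrafilter $\mathcal{U}_m=\{X\in[\omega]^\omega\cap V[m] : m\subseteq^* X\}$ in $V[m]$; (iv) $\mathcal{U}_m\subseteq\mathcal{I}^+$, since for every $X\in\mathcal{I}$ the set $\{(s,B):B\cap X=\emptyset\}$ is dense (because $B\setminus X\in\mathcal{H}$ whenever $B\in\mathcal{H}$ and $X\in\mathcal{I}$), so $m\cap X$ is finite and hence no element of $\mathcal{I}$ lies in $\mathcal{U}_m$.

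I would set up the bookkeeping using a $\Diamond_{\omega_2}$-sequence in $L$ to enumerate all pairs $(\alpha,\dot{\mathcal{J}})$ in which $\dot{\mathcal{J}}$ is a $P_\alpha$-name forced to be a semiselective ideal. The $\aleph_2$-chain condition of $P_{\omega_2}$, which follows from properness together with $\mathrm{GCH}$ being preserved at each stage, ensures that every $P_{\omega_2}$-name for a semiselective ideal reduces, via reflection at a stage of cofinality $\omega_1$, to a $P_\alpha$-name; the bookkeeping then guarantees that each such pair is realized as some $(\beta,\dot{\mathcal{I}}_\beta)$ with $\beta\ge\alpha$. Thus for every semiselective ideal $\mathcal{I}$ in $V^{P_{\omega_2}}$ there is a stage $\beta<\omega_2$ at which we forced with $\mathbb{M}_{\dot{\mathcal{I}}_\beta^+}$ where $\dot{\mathcal{I}}_\beta$ names a reflection of $\mathcal{I}$.

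The main obstacle, and the technical heart of the theorem, is the preservation of the selective ultrafilter $\mathcal{U}_{m_\beta}$ through the remainder of the iteration. One must prove that countable support iterations of $\mathbb{M}_{\mathcal{H}}$-type forcings preserve the property ``$m_\beta$ generates a selective ultrafilter'' from the intermediate model $V^{P_{\beta+1}}$ up to $V^{P_{\omega_2}}$; this is in the spirit of Shelah's preservation theorems for P-points and Ramsey ultrafilters (see the framework in \cite{Shelah-pi}). Concretely, one defines a preservation predicate along the iteration asserting, for each $\alpha<\omega_2$, that $\mathcal{U}_{m_\alpha}$ remains a selective ultrafilter at every subsequent stage, and shows inductively that each iterand $\mathbb{M}_{\dot{\mathcal{I}}_\xi^+}$ preserves the predicate by virtue of the pure decision and $\omega^\omega$-bounding properties established above. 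Once this is in place, in the final model $V^{P_{\omega_2}}$ we have $\mathfrak{c}=\aleph_2$ and, for every semiselective ideal $\mathcal{I}$, the ultrafilter $\mathcal{U}_{m_\beta}$ for any stage $\beta$ reflecting $\mathcal{I}$ is a selective ultrafilter contained in $\mathcal{I}^+$.
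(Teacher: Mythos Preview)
There is a genuine gap in your claim (iii): the Mathias generic real $m$ does \emph{not} generate an ultrafilter in $V[m]$. The filter $\mathcal{U}_m=\{X\in[\omega]^\omega\cap V[m]:m\subseteq^*X\}$ fails to be ultra, since Mathias forcing adds new reals and in particular $m$ itself can be split in $V[m]$ into two infinite pieces $A$ and $m\setminus A$; then neither $A$ nor $\omega\setminus A$ almost contains $m$, so neither lies in $\mathcal{U}_m$. What is true is that $\{X\in\wp(\omega)\cap V:m\subseteq^*X\}$ is a selective ultrafilter on the \emph{ground-model} reals, but that is useless once new subsets of $\omega$ appear.

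The paper avoids this by forcing with the pure part $(\mathcal{I}^+,\subseteq^*)$ rather than full Mathias forcing $\mathbb{M}_{\mathcal{I}^+}$. For a semiselective $\mathcal{I}$ this poset is $\sigma$-distributive, hence adds no new reals; the generic filter is then literally a selective ultrafilter in the extension and is contained in $\mathcal{I}^+$. This change has two knock-on effects you would need to address. First, since the iterands add no reals, the paper interleaves Sacks forcing at stages of countable cofinality to force $\mathfrak{c}=\omega_2$ in the final model, which is needed so that names for coideals can be coded by subsets of $\omega_2$ and guessed by $\diamondsuit(S^2_1)$. Second, the preservation argument becomes easy: $(\mathcal{I}^+,\subseteq^*)$ and Sacks forcing are both proper, $\omega^\omega$-bounding, and preserve p-points (the first trivially, since it adds no reals), so the Blass--Shelah preservation theorem and Proposition~\ref{Halbeisen_theorem} apply directly along the iteration. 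Your outline of the bookkeeping via diamond and the reflection of names to intermediate stages is otherwise in line with the paper's argument.
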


\smallskip

The proof of this theorem is given in Subsection \ref{consistency}, and we only describe here the line of the argument. If $\mathcal I$ is a semiselective ideal, forcing with $(\mathcal I^+, \subseteq^*)$ adds a selective ultrafilter contained in $\mathcal I^+$. Starting with $V=L$, we iterate forcing with semiselective coideals to reach a generic extension for which every semiselective coideal is used at some stage of the iteration to add  a selective ultrafilter contained in it. We have to make sure that this selective ultrafilter remains a selective ultrafilter until the end of the iteration. In the next subsection, we present the preservation results that we will use for this purpose.

\subsection{Some preservation results}

The notion of proper forcing was introduced by Shelah in \cite{Shelah}, and its study was continued in \cite{Shelah-pi}, where proofs of the preservation results stated below can be found. In particular that properness is preserved by countable support iterations, that is, if $\mathbb P_\alpha$ is a countable support iteration of $(\dot{\mathbb Q}_\beta : \beta<\alpha)$, and for each $\beta<\alpha$ we have $1_\beta\Vdash \dot{\mathbb Q}_{\beta}$ is proper, then $\mathbb P_\alpha$ is proper (see \cite[Theorem 2.7]{Abraham}). Also, $\omega^\omega$--boundedness is preserved (see \cite[Application 6.1]{Goldstern}) as well as p--points are preserved (see \cite[Theorem 4.1]{Blass-Shelah}).

\medskip

Next, we present some preservation results leading to Corollary \ref{corollary} that will be used in the proof of Theorem \ref{semiselcontainsselultra}.

\begin{proposition}[] 
(see Goldstern, \cite{Goldstern}). If $\mathbb P_\alpha$ is a countable support iteration of $(\dot{\mathbb Q}_\beta : \beta<\alpha)$, 
and for each $\beta<\alpha$
\begin{center} 
 $1_\beta\Vdash \dot{\mathbb Q}_\beta \text{ is proper and }\omega^\omega \text{--bounding}$,
\end{center}
then $\mathbb P_\alpha$ is proper and $\omega^\omega$--bounding.
\end{proposition}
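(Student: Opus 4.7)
The plan is to proceed by induction on the iteration length $\alpha$, separating the two claims. The preservation of properness under countable support iteration is Shelah's classical theorem: successor stages follow from the fact that two-step iteration of proper forcings is proper, while at limits one constructs $(M,\mathbb{P}_\alpha)$-generic master conditions through a countable elementary submodel $M \prec H(\theta)$ containing all relevant data. Since this part is standard, the substantive content is to show that $\omega^\omega$-bounding is also inherited by $\mathbb{P}_\alpha$.

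Assume inductively that $\mathbb{P}_\gamma$ is proper and $\omega^\omega$-bounding for every $\gamma < \alpha$. Given $p \in \mathbb{P}_\alpha$ and a $\mathbb{P}_\alpha$-name $\dot f$ for a function in $\omega^\omega$, my goal is to produce $q \leq p$ and a ground-model $g \in \omega^\omega$ with $q \Vdash \dot f(n) \leq g(n)$ for all $n$. Fix a countable $M \prec H(\theta)$ with $\mathbb{P}_\alpha, p, \dot f \in M$, enumerate the dense subsets of $\mathbb{P}_\alpha$ that belong to $M$ as $\{D_n : n \in \omega\}$, and, in case $\operatorname{cf}(\alpha) = \omega$, enumerate $M \cap \alpha$ cofinally as $\{\alpha_n : n \in \omega\}$. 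I would then perform a fusion: build a decreasing sequence $p = p_0 \geq p_1 \geq \cdots$ in $M$ and integers $g(n)$ so that $p_{n+1} \in D_n$, $p_{n+1} \Vdash \dot f(n) \leq g(n)$, and $p_{n+1} \!\restriction\! \alpha_n$ already decides enough of the lower iteration to allow a lower bound to be formed. The crucial observation at each step is that the quotient $\mathbb{P}_\alpha / \mathbb{P}_{\alpha_n}$ is, by the inductive hypothesis, forced to be $\omega^\omega$-bounding, so a bound $g(n)$ for $\dot f(n)$ can be found in the ground model after conditioning on a suitable initial segment.

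Assembling the sequence $\langle p_n \rangle$ into a master condition $q \leq p$ is done by the same countable-support limit construction that yields properness, and because the bounding steps were arranged level-by-level inside $M$, the resulting $q$ forces $\dot f(n) \leq g(n)$ for every $n$, with $g = \langle g(n) : n \in \omega\rangle$ lying in $V$.

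The main obstacle I expect is precisely the limit case of countable cofinality: a naive countable-support limit of $\omega^\omega$-bounding forcings need not be $\omega^\omega$-bounding, so the properness fusion and the bounding argument must be interleaved rather than performed one after the other. This is exactly the setting handled by Shelah's abstract preservation theorems in \cite[Ch.~VI]{Shelah-pi} and Goldstern's reformulation in \cite{Goldstern}, where $\omega^\omega$-bounding is recast as the preservation of a two-player covering game whose winning strategies survive countable-support limits; invoking that machinery (rather than re-proving it by hand) is the cleanest way to complete the inductive step and close the argument.
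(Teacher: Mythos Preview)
The paper does not prove this proposition at all: it is stated with the attribution ``(see Goldstern, \cite{Goldstern})'' and used as a black box, with no proof environment following it. So there is nothing in the paper to compare your argument against beyond the bare citation.

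Your sketch is a reasonable outline of the standard approach and, fittingly, ends by deferring to exactly the same sources the paper cites. One technical point worth flagging: the line ``$p_{n+1} \Vdash \dot f(n) \leq g(n)$'' is too optimistic as written, since a single condition in $M$ will not in general decide $\dot f(n)$; the actual fusion has to arrange that the \emph{set} of conditions compatible with the partial master condition built so far gives only finitely many possible values for $\dot f(n)$, and this is where the $\omega^\omega$-bounding of the tail iteration is used. Similarly, your appeal to the inductive hypothesis for ``the quotient $\mathbb{P}_\alpha / \mathbb{P}_{\alpha_n}$'' is not literally covered by induction on $\alpha$, since the quotient is not an initial segment; one either reorganizes the induction or, as you say at the end, invokes the abstract preservation framework. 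Since you ultimately point to Goldstern and Shelah for the limit step anyway, your proposal and the paper's treatment amount to the same thing: cite the literature.
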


\begin{proposition}[] \label{Blass-Shelah_theorem}
(Blass$-$Shelah, \cite{Blass-Shelah}). If $\mathbb P_\alpha$ is a countable support iteration of $(\dot{\mathbb Q}_\beta : \beta<\alpha)$, and for each $\beta<\alpha$ 
\begin{center}
 $ 1_\beta \Vdash \dot{\mathbb Q }_\beta \text{ is proper and preserves p--points}$,
\end{center}
then $\mathbb P_\alpha$ is proper and preserves p--points.
\end{proposition}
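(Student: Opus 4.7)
The plan is to prove the Blass--Shelah preservation theorem by (i) finding a combinatorial reformulation of ``preserves the p--point $\mathcal{U}$'' that behaves well under iteration, and then (ii) running the standard countable support preservation machinery (transfinite induction on $\alpha$, with the subtle step being limits of cofinality $\omega$).

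First I would fix a p--point $\mathcal{U}$ in the ground model $V$ and reformulate preservation as follows. Preservation of $\mathcal{U}$ means two things: the filter generated by $\mathcal{U}$ in $V^{\mathbb{P}_\alpha}$ is an ultrafilter, and it is a p--point. Both can be unified into the single condition $(\ast)$: for every $\mathbb{P}_\alpha$--name $\dot{A}$ for a subset of $\omega$ and every countable collection of such names $(\dot{A}_n)_{n\in\omega}$, there exists $U\in\mathcal{U}$ such that either $\mathbb{P}_\alpha\Vdash U\subseteq^{*}\dot{A}$ or $\mathbb{P}_\alpha\Vdash U\cap\dot{A}$ is finite (ultrafilter part), and there exists $U\in\mathcal{U}$ such that $\mathbb{P}_\alpha\Vdash U\subseteq^{*}\dot{A}_n$ whenever $\mathbb{P}_\alpha\Vdash\dot{A}_n$ is in the generated filter (p--point part). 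The first half is implied by the second via the trivial sequence $(\dot{A},\omega,\omega,\dots)$, so one can work with just the second.

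Next I would proceed by induction on $\alpha$. The successor step is immediate from the hypothesis: if $\mathbb{P}_\beta$ preserves $\mathcal{U}$, and $1_\beta\Vdash_\beta \dot{\mathbb{Q}}_\beta$ preserves p--points, then in $V^{\mathbb{P}_\beta}$ the p--point generated by $\mathcal{U}$ remains a p--point after forcing with $\dot{\mathbb{Q}}_\beta$, and one checks that this descends to $V$ viewing $\mathbb{P}_{\beta+1}$ as a two--step iteration. Limits of uncountable cofinality $\kappa$ are handled by the fact that any countable collection of $\mathbb{P}_\alpha$--names lives in some $\mathbb{P}_\gamma$ with $\gamma<\alpha$, so the inductive hypothesis at stage $\gamma$ applies.

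The main obstacle, and the genuinely new work, is the limit step of cofinality $\omega$. Fix $\alpha_n\nearrow\alpha$ and a sequence $(\dot{A}_n)_{n\in\omega}$ of $\mathbb{P}_\alpha$--names. The plan is to do a \emph{fusion} argument analogous to the properness preservation proof: build a decreasing sequence of conditions $p_n\in\mathbb{P}_{\alpha_n}$ together with a decreasing sequence $U_n\in\mathcal{U}$ such that $p_n$ decides enough of $\dot{A}_0,\dots,\dot{A}_{n-1}$ to guarantee $U_n\subseteq^{*}\dot{A}_k$ for $k<n$, then produce a pseudointersection $U\in\mathcal{U}$ of the $U_n$ using that $\mathcal{U}$ is a p--point in $V$, and a master condition $p$ below all the $p_n$ using properness of the iteration. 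Making the bookkeeping work requires passing through a countable elementary submodel $M\prec H(\theta)$ containing everything relevant, and constructing the $p_n$ as a sequence of $(M,\mathbb{P}_{\alpha_n})$--generic conditions whose upward limit is $(M,\mathbb{P}_\alpha)$--generic; the inductive hypothesis at each $\alpha_n$ supplies the $U_n$, and the p--point property of $\mathcal{U}$ in $V$ supplies $U$. This is the delicate part, because one must simultaneously maintain genericity over $M$, compatibility of conditions along the tower, and the diagonal shrinking of the $U_n$ inside $\mathcal{U}$; everything else in the induction is routine once $(\ast)$ is fused correctly at $\omega$--limits.
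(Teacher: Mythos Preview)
The paper does not give its own proof of this proposition: it is quoted from the literature with a citation to Blass and Shelah, so there is no argument in the paper to compare against. Your outline is the standard strategy used in the literature (reduce to a fixed p--point $\mathcal{U}$, induct on $\alpha$, handle successors by two--step factoring, handle limits of uncountable cofinality by reflecting names to an earlier stage, and handle $\omega$--limits by a fusion inside a countable elementary submodel), and as a high-level plan it is correct.

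Two points where your sketch is looser than the actual proof requires. First, your reformulation $(\ast)$ is stated at the weakest condition $1_{\mathbb{P}_\alpha}$; the argument must be carried out below an arbitrary condition $p$, since ``$\mathcal{U}$ generates an ultrafilter'' and ``the generated ultrafilter is a p--point'' are statements that must be forced by every condition, and the witnesses $U\in\mathcal{U}$ depend on the condition one works below. Second, the $\omega$--limit step is where all the content lies, and your description elides the real difficulty: the names $\dot{A}_n$ are $\mathbb{P}_\alpha$--names, not $\mathbb{P}_{\alpha_n}$--names, so one cannot simply invoke the inductive hypothesis at $\alpha_n$ to produce $U_n$. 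One has to interleave the choice of $U_n\in\mathcal{U}$ with the construction of the $(M,\mathbb{P}_{\alpha_n})$--generic conditions $p_n$ in such a way that $p_n$ already decides enough of each $\dot{A}_k$ restricted to $U_n$; arranging this compatibly with the fusion (so that a master condition exists at the end) is exactly the technical core of the Blass--Shelah argument, and it does not follow from properness and the p--point property of $\mathcal{U}$ by a one-line appeal. Your plan is right, but the execution of that single step is the whole theorem.
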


Let $\mathcal U$ be a selective ultrafilter in $V$ and let $\mathbb P$ be a forcing notion, we say that $\mathbb P$ preserves $\mathcal U$ if $\mathcal U$ generates a selective ultrafilter in any  $\mathbb P$--generic extension $V[G]$. The following result is used to prove the preservation of selective ultrafilters (see \cite[Lemma 21.12]{Halbeisen}).

\begin{proposition}[] \label{Halbeisen_theorem}
(see Halbeisen, \cite{Halbeisen}). If $\mathbb P$ is proper and $\omega^\omega$--bounding, and $\mathcal U$  is a selective ultrafilter in $V$ which generates an ultrafilter $\hat{\mathcal U}$ in the $\mathbb P$--generic extension $V[G]$, then $\hat{\mathcal U}$ is selective in $V[G]$.
\end{proposition}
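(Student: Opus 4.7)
The plan is to verify that $\hat{\mathcal U}$ satisfies both properties $(p)$ and $(q)$ of Proposition \ref{selective=(p)+(q)} in $V[G]$, invoking properness for $(p)$ and $\omega^\omega$--boundedness for $(q)$. Note first that, since $\hat{\mathcal U}$ is generated by $\mathcal U$, any $X\in \hat{\mathcal U}\cap V$ satisfies $X\supseteq^* Y$ for some $Y\in\mathcal U$, and since $\mathcal U$ is an ultrafilter in $V$ closed under almost--supersets, $X\in \mathcal U$; hence $\hat{\mathcal U}\cap V=\mathcal U$, a fact used tacitly below.

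For property $(p)$, let $\{A_n\}_{n\in\omega}\subseteq \hat{\mathcal U}$ be decreasing in $V[G]$. For each $n$, since $A_n\in \hat{\mathcal U}$, pick $B_n\in \mathcal U$ with $B_n\subseteq^* A_n$. The countable set $\{B_n:n\in\omega\}$ is a subset of $V$ living in $V[G]$, and the standard consequence of properness (that countable sets of ground--model elements in a proper extension are contained in countable ground--model sets, via $(M,\mathbb P)$--generic conditions) furnishes a countable $Y\in V$ with $\{B_n:n\in\omega\}\subseteq Y\subseteq \mathcal U$. Enumerating $Y=\{Y_k:k\in\omega\}$ in $V$ and invoking property $(p)$ of $\mathcal U$ in $V$, we obtain $C\in \mathcal U$ with $C\subseteq^* Y_k$ for every $k$. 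Since each $B_n$ coincides with some $Y_{k_n}$, one has $C\subseteq^* B_n\subseteq^* A_n$ for every $n$, providing the desired pseudo--intersection in $\hat{\mathcal U}$.

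For property $(q)$, let $\omega=\bigsqcup_{k\in\omega}F_k$ be a partition into finite pieces in $V[G]$, and define $h(n)=\max F_k$ for the unique $k$ with $n\in F_k$. By $\omega^\omega$--boundedness, fix a strictly increasing $h'\in V$ dominating $h$, and construct in $V$ the interval partition $J_0=\{0\}$, $J_{i+1}=(\max J_i,\,h'(\max J_i+1)]$. If $\min F_k\in J_i$ then $\max F_k=h(\min F_k)\leq h'(\min F_k)\leq \max J_{i+1}$, so each $F_k$ is contained in some $J_i\cup J_{i+1}$. Property $(q)$ of $\mathcal U$ in $V$ applied to $\{J_i\}_{i\in\omega}$ yields $A\in \mathcal U$ with $|A\cap J_i|\leq 1$ for every $i$, whence $|A\cap F_k|\leq 2$. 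Split $A=A_0\sqcup A_1$ with $A_0=A\cap\bigcup_i J_{2i}$ and $A_1=A\cap\bigcup_i J_{2i+1}$; since $\hat{\mathcal U}$ is an ultrafilter, at least one of $A_0,A_1$ lies in $\hat{\mathcal U}$, say $A_0$. For each $k$ the inclusion $F_k\subseteq J_i\cup J_{i+1}$ forces $A_0\cap F_k$ to meet only the even--indexed interval among $J_i,J_{i+1}$, so $|A_0\cap F_k|\leq 1$, as required.

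The main obstacle is the properness invocation in the $(p)$--verification, namely the step that captures the $V[G]$--sequence $\{B_n\}$ by a countable ground--model family; this reflection--based consequence of properness is essential for reducing the $V[G]$--diagonalization problem to one inside $V$, where the selectivity of $\mathcal U$ is available. The combinatorial steps for $(q)$, while technically intricate, are classical once the interval partition is produced from $h'$ by $\omega^\omega$--boundedness, together with the ultrafilter dichotomy for $\hat{\mathcal U}$.
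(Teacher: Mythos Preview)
The paper does not prove this proposition; it is stated with a citation to Halbeisen's book and used as a black box in the preservation argument of Section~\ref{everysemiselective}. Your argument is correct and is essentially the standard one: properness, via the countable-covering property for sets of ground-model elements, reduces the $(p)$ verification to the selectivity of $\mathcal U$ in $V$, while $\omega^\omega$--bounding lets you coarsen the $V[G]$--partition to a ground-model interval partition and then apply $(q)$ for $\mathcal U$ together with the even/odd split.

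Two small glosses are implicit but routine. In the $(p)$ step, the countable cover $Y\in V$ furnished by properness need not a priori be contained in $\mathcal U$, nor be $\subseteq^*$--decreasing; intersect with $\mathcal U\in V$ and replace $Y_k$ by $\bigcap_{j\le k}Y_j\in\mathcal U$ before invoking $(p)$. In the $(q)$ step you verify the property only for partitions of $\omega$, but for an ultrafilter this suffices: a partition of any $A\in\hat{\mathcal U}$ into finite pieces extends to a partition of $\omega$ by adjoining the singletons of $\omega\setminus A$, and intersecting the resulting selector with $A$ remains in $\hat{\mathcal U}$. With these standard adjustments your proof is complete.
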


As an immediate consequence of the preceding three propositions, the following preservation result can be deduced (see \cite[Corollary 21.13]{Halbeisen}):

\begin{corollary}[] \label{corollary}
(see Halbeisen, \cite{Halbeisen}). Let $\mathcal U$ be a selective ultrafilter in $V$, and let $\mathbb P_\alpha$ be a countable support iteration of $(\dot{\mathbb Q}_\beta: \beta<\alpha)$ such that for each $\beta<\alpha$ 
\begin{center}
   $1_\beta \Vdash \dot{\mathbb Q}_\beta \text{ is proper, } \omega^\omega\text{--bounding, and preserves } \mathcal U$, 
\end{center}
then $\mathbb P_\alpha$ preserves $\mathcal U$.
\end{corollary}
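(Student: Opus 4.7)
The plan is to combine the three preservation results quoted just above: properness plus $\omega^\omega$--bounding are preserved under countable support iterations, Blass--Shelah guarantees that p--points are preserved, and Halbeisen's Proposition \ref{Halbeisen_theorem} upgrades a generated ultrafilter to a selective one provided the forcing is proper and $\omega^\omega$--bounding. Since selectivity implies the p--point property, the hypothesis that each $\dot{\mathbb Q}_\beta$ preserves $\mathcal U$ automatically implies that each $\dot{\mathbb Q}_\beta$ preserves $\mathcal U$ as a p--point, which is what is needed to feed into the Blass--Shelah machinery.

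Concretely, I would proceed in three steps. First, using the preservation of properness under countable support iteration (Shelah) and the preservation of $\omega^\omega$--bounding (Goldstern), conclude from the hypothesis that $\mathbb P_\alpha$ itself is proper and $\omega^\omega$--bounding. Second, observe that for every $\beta<\alpha$ the assumption $1_\beta \Vdash \dot{\mathbb Q}_\beta$ \emph{preserves} $\mathcal U$ implies in particular that $\mathcal U$ generates an ultrafilter in the $\dot{\mathbb Q}_\beta$--extension, and that this generated ultrafilter is a p--point (being even selective). Hence $1_\beta \Vdash \dot{\mathbb Q}_\beta$ is proper and preserves p--points, so by Proposition \ref{Blass-Shelah_theorem} the full iteration $\mathbb P_\alpha$ preserves p--points. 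In particular, if $G$ is $\mathbb P_\alpha$--generic over $V$, then $\mathcal U$ generates a p--point $\hat{\mathcal U}$ in $V[G]$; the key consequence we need is simply that $\hat{\mathcal U} = \{X\in ([\omega]^\omega)^{V[G]} : (\exists\, U\in\mathcal U)(U\subseteq^* X)\}$ is an ultrafilter in $V[G]$.

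Finally, with $\mathbb P_\alpha$ proper and $\omega^\omega$--bounding, and $\mathcal U$ generating an ultrafilter $\hat{\mathcal U}$ in $V[G]$, an application of Proposition \ref{Halbeisen_theorem} yields that $\hat{\mathcal U}$ is selective in $V[G]$. That is exactly the statement that $\mathbb P_\alpha$ preserves $\mathcal U$, completing the argument.

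The step I expect to require the most care is the second one, namely verifying the hypotheses of Proposition \ref{Blass-Shelah_theorem} for the iteration. The subtle point is that \emph{preserving} $\mathcal U$ is a conjunction of two conditions: that $\mathcal U$ continues to generate an ultrafilter, and that the resulting ultrafilter is still selective. One must check that the first half alone, together with selectivity in the ground model, delivers the p--point preservation hypothesis of Blass--Shelah at each stage; this is immediate because any selective ultrafilter is a p--point, so its generated extension (being an ultrafilter, in fact selective by assumption) is a p--point as well. Everything else is a direct invocation of the previously stated propositions.
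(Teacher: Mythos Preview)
Your proposal is correct and is exactly the argument the paper has in mind: the corollary is stated without proof, immediately after the three propositions (Goldstern on iterating proper $+$ $\omega^\omega$--bounding, Blass--Shelah on iterating p--point preservation, and Halbeisen on upgrading a generated ultrafilter to selective), and your three steps combine them in the intended way.

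One small point worth tightening: when you pass from ``$\dot{\mathbb Q}_\beta$ preserves $\mathcal U$'' to ``$\dot{\mathbb Q}_\beta$ preserves p--points'' you are implicitly using that the Blass--Shelah preservation theorem applies p--point by p--point, so that knowing only that the single p--point $\mathcal U$ (more precisely, the ultrafilter it generates at stage $\beta$) survives each iterand suffices to conclude it survives the whole iteration. That is indeed how the Blass--Shelah result works, but the phrasing ``preserves p--points'' in the plural could be read as a stronger hypothesis than you actually have; it would be cleaner to say ``preserves $\mathcal U$ as a p--point'' throughout.
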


Furthermore, in the proof of Theorem \ref{semiselcontainsselultra}, we will also make use of the following preservation result (see \cite[Lemma 21.6]{Halbeisen}):

\begin{lemma}[] \label{lemma}
(see Halbeisen, \cite{Halbeisen}). 
Let $\mathbb P_{\omega_2}$ be a countable support iteration of $(\dot{\mathbb Q}_\beta : \beta <\omega_2)$ such that for each $\beta<\omega_2$
\begin{center}
$1_\beta \Vdash \dot{\mathbb Q}_\beta \text{ is a proper forcing notion of size} \leq\mathfrak{c}$.
\end{center}
If $\mathrm{CH}$ holds in the ground model, then $1_\beta\Vdash \mathrm{CH}$ for all $\beta<\omega_ 2$. 
\end{lemma}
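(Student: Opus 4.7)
The plan is to argue by transfinite induction on $\beta \le \omega_2$, establishing simultaneously the two statements $|\mathbb{P}_\beta| \le \aleph_1$ and $\mathbb{P}_\beta \Vdash \mathrm{CH}$ for every $\beta < \omega_2$. The base case is trivial. At a limit ordinal $\beta < \omega_2$, since $|\beta| \le \aleph_1$ and the inductive hypothesis gives $|\mathbb{P}_\gamma| \le \aleph_1$ for every $\gamma < \beta$, the countable support construction yields $|\mathbb{P}_\beta| \le |[\beta]^{\le \aleph_0}| \cdot \aleph_1^{\aleph_0} \le \aleph_1^{\aleph_0} \cdot \aleph_1 = \aleph_1$, invoking $\mathrm{CH}$ in the ground model to compute $\aleph_1^{\aleph_0} = \aleph_1$.

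For the successor step, assume $|\mathbb{P}_\beta| \le \aleph_1$ and $\mathbb{P}_\beta \Vdash \mathrm{CH}$. Then $\mathbb{P}_\beta \Vdash |\dot{\mathbb{Q}}_\beta| \le \mathfrak{c} = \aleph_1$, so up to forcing equivalence I may assume the underlying set of $\dot{\mathbb{Q}}_\beta$ is (a name for) a subset of $\omega_1$; then $\mathbb{P}_{\beta+1} = \mathbb{P}_\beta \ast \dot{\mathbb{Q}}_\beta$ has a dense subset of pairs $(q, \dot{\alpha})$ with $\dot{\alpha}$ a $\mathbb{P}_\beta$-name for an ordinal below $\omega_1$. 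The key counting uses properness: for any such $\dot{\alpha}$ and any $p \in \mathbb{P}_\beta$, fix a countable elementary submodel $M \prec H_\theta$ with $p, \dot{\alpha} \in M$ and an $(M, \mathbb{P}_\beta)$-generic master condition $q \le p$; then $\dot{\alpha}$ is equivalent below $q$ to its restriction $\dot{\alpha}_M := \{(\check{\gamma}, r) \in \dot{\alpha} : r \in M \cap \mathbb{P}_\beta\}$, a nice name supported on the countable set $M \cap \mathbb{P}_\beta$. Under $\mathrm{CH}$ there are only $|\mathbb{P}_\beta|^{\aleph_0} = \aleph_1$ countable subsets of $\mathbb{P}_\beta$, and over each such countable support only $\omega_1^{\aleph_0} = \aleph_1$ nice names for countable ordinals; hence $\mathbb{P}_{\beta+1}$ has a dense subset of size at most $\aleph_1 \cdot \aleph_1 = \aleph_1$, so $|\mathbb{P}_{\beta+1}| \le \aleph_1$.

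Once $|\mathbb{P}_\beta| \le \aleph_1$ is established, the preservation $\mathbb{P}_\beta \Vdash \mathrm{CH}$ will follow from the same properness reduction applied now to names for reals: every nice $\mathbb{P}_\beta$-name $\dot{r}$ for an element of $2^\omega$ is, below a master condition for some countable $M \ni \dot{r}$, equivalent to its restriction $\dot{r}_M$ to $M \cap \mathbb{P}_\beta$, and the ensuing count is at most $|\mathbb{P}_\beta|^{\aleph_0} \cdot (2^{\aleph_0})^{\aleph_0} = \aleph_1 \cdot \aleph_1 = \aleph_1$ inequivalent such restricted names, whence $\mathbb{P}_\beta \Vdash 2^{\aleph_0} \le \aleph_1$. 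I expect the main technical obstacle to be the careful verification of the properness reduction $q \Vdash \dot{\alpha} = \dot{\alpha}_M$, which relies on $(M, \mathbb{P}_\beta)$-genericity to guarantee that every maximal antichain of $\mathbb{P}_\beta$ in $M$ deciding $\dot{\alpha}$ meets the generic filter inside $M$. This is precisely where properness, rather than a mere chain-condition or cardinality hypothesis, is indispensable: under only the $\aleph_2$-c.c.\ which follows from $|\mathbb{P}_\beta| \le \aleph_1$ one could not rule out $2^{\aleph_1}$ pairwise inequivalent nice names for countable ordinals, and $\mathrm{CH}$ could in principle be destroyed.
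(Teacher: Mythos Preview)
The paper does not prove this lemma; it is stated with a citation to Halbeisen's book and no argument is given. Your proposal supplies the standard proof one finds in such references: a simultaneous induction on $\beta<\omega_2$ showing that $\mathbb{P}_\beta$ has (a dense subset of) size $\le\aleph_1$ and that $\mathbb{P}_\beta\Vdash\mathrm{CH}$, with properness used at the key step to reduce arbitrary names for reals (or for ordinals below $\omega_1$) to names supported on a countable elementary submodel via a master condition.

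Your argument is essentially correct. Two minor comments. First, the limit-stage cardinality bound $|\mathbb{P}_\beta|\le\aleph_1^{\aleph_0}$ tacitly assumes you have already replaced each $\dot{\mathbb{Q}}_\gamma$ by a fixed name for a surjection $\omega_1\to\dot{\mathbb{Q}}_\gamma$, so that the coordinate $p(\gamma)$ ranges over a set of size $\aleph_1$ rather than over all $\mathbb{P}_\gamma$-names; this is the usual device and should be made explicit. Second, for the successor-step cardinality bound you do not actually need the properness reduction: once $\mathbb{P}_\beta\Vdash|\dot{\mathbb{Q}}_\beta|\le\aleph_1$, the set $\{(p,\dot g(\check\alpha)):p\in\mathbb{P}_\beta,\ \alpha<\omega_1\}$ is already dense in $\mathbb{P}_\beta\ast\dot{\mathbb{Q}}_\beta$ for any fixed name $\dot g$ of a surjection $\omega_1\to\dot{\mathbb{Q}}_\beta$. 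Properness is genuinely needed exactly where you locate it, namely for the preservation of $\mathrm{CH}$ (counting names for reals), and your explanation of why a mere chain condition does not suffice is on point.
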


It is well-known that if $\mathcal I$ is a semiselective ideal, forcing with $(\mathcal I^+, \subseteq^*)$ adds a selective ultrafilter contained in $\mathcal I^+$ (see \cite[Lemma 4.1]{Farah}). The semiselectivity of $\mathcal I$ implies that the  forcing $(\mathcal I^+, \subseteq^*)$ is $\sigma-$distributive, so it  does not add new reals (see \cite[Theorem 15.6]{Jech(Book)}).  The semiselectivity of $\mathcal I$ also gives that $(\mathcal I^+, \subseteq^*)$ is proper. To see this, let  $\theta$ be sufficiently large and  $(\mathcal I^+, \subseteq^*)\in M \preccurlyeq H(\theta)$ with $M$ countable. Let $\{\mathcal{D}_n \}_{n\in \omega}$ list all the dense--open subsets of $\mathcal I^+$. For $A\in \mathcal I^+$ in $M$ there is a diagonalization $B\in \mathcal I^+\!\restriction\! A$ of  $\{\mathcal{D}_n\}_{n\in \omega}$, then the condition $B$ is $(M,(\mathcal I^+, \subseteq^*))$--generic. Hence $(\mathcal I^+, \subseteq^*)$ is $\omega^\omega$--bounding and preserves p--points.

\subsection{The consistency result} \label{consistency}

What we are mainly concerned with here is the proof of Theorem \ref{semiselcontainsselultra}. First of all, we present some essential preliminaries related to a diamond principle (see, for example, \cite[Chapter 7]{Kunen(Book)}).

\medskip
 
Let $S^2_1=\{\alpha \in\omega_2 : \mathrm{cof}(\alpha) =\omega_1\}$ the stationary subset of all ordinals below $\omega_2$ of cofinality $\omega_1$. A sequence $\langle A_\alpha: \alpha \in S^2_1 \rangle$ of subsets of $\omega_2$ is a $\diamondsuit(S^2_1)$--sequence if $A_\alpha\subseteq \alpha$ for each $\alpha\in S^2_1$ and for every $A\subseteq \omega_2$ the set $\{\alpha\in S^2_1 : A\cap \alpha= A_\alpha\}$ is stationary. Denote by $\diamondsuit(S^2_1)$ the statement that a $\diamondsuit(S^2_1)$--sequence exists, and recall that $L\models \diamondsuit(S^2_1)$.

\medskip

Without further ado, we now proceed with the proof of Theorem \ref{semiselcontainsselultra}, which states the consistency with $\mathrm{ZFC}$ that every semiselective coideal contains a selective ultrafilter. 

\smallskip

\begin{proof}[Proof of Theorem \ref{semiselcontainsselultra}]
  
We work under $V=L$. We set up a countable support iteration $\mathbb P_{\omega_2}=(\mathbb P_\alpha, \dot{\mathbb Q}_\alpha : \alpha <\omega_2)$ such that for each $\beta\in S^2_1$ 
\begin{center}
   ${1}_\beta \Vdash \dot{\mathbb Q}_\beta \text{ is the forcing by a semiselective coideal}$,
\end{center}
so at stage $\beta\in S^2_1$, a selective ultrafilter contained in the semiselective coideal named by $\dot{\mathbb Q}_\beta$ is added.

\medskip

At each step of the iteration we have a model satisfying  $\textrm{CH}$, but $\textrm{CH}$ might not hold in the final model. For reasons related to the analysis of certain names for objects in the final model it will be convenient to have that  $\mathfrak{c} = \omega_2$ in the final model, and we can get this by cofinally iterating with Sacks forcing, which is proper, $\omega^\omega \text{--bounding}$, adds reals, and preserves $\textrm{CH}$ (see \cite[Chapter 23]{Halbeisen}). 

\medskip

For every ordinal $\beta<\omega_2$ of countable cofinality, that is $\mathrm{cof}(\beta)<\omega_1$, we use Sacks forcing, 
\begin{center}
  ${1}_\beta \Vdash \dot{\mathbb Q}_\beta \text{ is Sacks forcing}$,
\end{center}
and for other ordinals $\alpha$, let $\dot{\mathbb Q}_\alpha$ be a name for the trivial partial order $\{\emptyset\}$.

\medskip

We use the diamond principle $\diamondsuit(S^2_1)$ to get that every semiselective coideal in the $\mathbb P_{\omega_2}$--generic extension is used at some stage of the iteration at which a selective ultrafilter contained in it is added. Corollary \ref{corollary} can be applied to prove that this ultrafilter remains a selective ultrafilter until the iteration is completed. At each step of the iteration, by Lemma \ref{lemma} we have that $\textrm{CH}$ is preserved, and since reals are added in cofinally many steps, we get $\mathfrak{c} = \omega_2$ in the final generic extension.

\medskip
  
We want to show that if $G$ is $\mathbb P_{\omega_2}$--generic over $V$, in the generic extension $V[G]$ we have that every semiselective coideal contains a selective ultrafilter. For this we need first to analyze names for coideals in $V[G]$; in particular, we will see that names for coideals in $V[G]$ can be coded by subsets of $\omega_2$. 

\medskip
 
A coideal in $V[G]$, being a family of infinite subsets of $\omega$, can be coded by a subset of $\omega\times \omega_2$. Therefore, via this coding, every coideal in $V[G]$ has a name that is a nice name for a subset of $\omega\times \omega_2$. Fixing a bijection in the ground model between $\omega\times\omega_2$ and $\omega_2$, we can think that every coideal in $V[G]$ has a $\mathbb P_{\omega_2}$--name which is a nice name for a subset of $\omega_2$; in other words, it is of the form $\bigcup_{\beta<\omega_2} ( \{\beta\}\times C_\beta )$, where each $C_\beta$ is a maximal antichain of $\mathbb P_{\omega_2}$.

\medskip
 
To see that such a name can be coded by a subset of $\omega_2$, we analyze the sizes of maximal antichains in $\mathbb P_{\omega_2}$. It is well-known that any countable support iteration of length $\leq \omega_2$ of proper forcings of size $\aleph_1$ has the $\aleph_2$-cc (see \cite[Theorem 2.10]{Abraham}).

\medskip

We have then that a nice $\mathbb P_{\omega_2}$--name for a subset of $\omega_2$ uses at most $\omega_2$ conditions, the same holds for a  nice name for (the set coding) a coideal $\mathcal H$; so, it can be coded by a subset of $\omega_2$. Therefore, we can use a $\diamondsuit(S^2_1)$--sequence $\langle A_\alpha: \alpha \in S^2_1 \rangle$ to guess names for coideals in the generic extension $V[G]$.

\medskip

Let $\mathcal H\in V[G]$ be a semiselective coideal, and let $\dot{\mathcal H}$ be a $\mathbb P_{\omega_2}$--name for $\mathcal H$, and let $A\subseteq \omega_2$ be a set that codes $\dot{\mathcal H}$. Then, by $\diamondsuit(S^2_1)$ the set $S= \{\alpha\in S^2_1: A\cap \alpha=A_\alpha\}$ is stationary. 

\medskip

Take an ordinal $\theta$ large enough, for instance $\theta=\omega_3$, so that $L_\theta$ contains all that is relevant for the argument, namely $\mathbb P_{\omega_2}$, $\dot{\mathcal H}$, and $A$. Next, we consider a chain $\langle M_\alpha : \alpha <\omega_2\rangle$ such that: 
\setlist{nolistsep}
\begin{enumerate}
	\setlength{\itemsep}{0pt}
	\item[(1)] $M_0=\emptyset$.
	\item[(2)] For every $1<\alpha<\omega_2$: $M_\alpha\prec L_\theta$; $\mathbb P_{\omega_2}, \dot{\mathcal H}, A\in M_\alpha$; $|M_\alpha|=\omega_1$;  and  $[M_\alpha]^\omega\subseteq M_\alpha$ (so $\omega_1\subseteq M_\alpha$).
	\item[(3)] If $\alpha<\beta<\omega_2$, then $M_\alpha\in M_\beta$ and $M_\alpha\subseteq M_\beta$.
	\item[(4)] If $\lambda<\omega_2$ is a limit ordinal, then $M_\lambda=\bigcup_{\alpha<\lambda} M_\alpha$.
\end{enumerate}

\medskip

For any such structure $N=M_\alpha$ with $1<\alpha<\omega_2$,  all subsets of $\omega$ belong to $N$, and $\omega_2\cap N$ is an ordinal less than $\omega_2$. By elementarity, 
\begin{center}
	$N\models A \text{ codes } \dot{\mathcal H} \text{ and } \dot{\mathcal H} \text{ is a   $\mathbb P_{\omega_2}$--name for a semiselective coideal}$, 	
\end{center}
so $A\cap N$ codes a  $\mathbb P_{N\cap \omega_2}$--name for a semiselective coideal, namely $\dot{\mathcal H}\cap N$. Additionally, the collection $C= \{ M_\alpha \cap \omega_2 : \alpha<\omega_2\}$ is a closed unbounded subset of $\omega_2$.

\medskip

Let $\alpha\in S\cap C$, then  $A\cap \alpha=A_\alpha$ and  $A\cap \alpha$ codes a name for a semiselective coideal, and this name is $\dot{\mathcal H}$ restricted to some ordinal of the form $\omega_2\cap M_\beta$. At stage $\alpha$ of the iteration, $\dot{\mathbb Q}_\alpha$ is this name, and we forced with it adding a selective ultrafilter contained in $\mathcal H\cap V[G_\alpha]$. By Corollary \ref{corollary}, this selective ultrafilter remains a selective ultrafilter until the end of the iteration; therefore, in $V[G]$ it is a selective ultrafilter contained in $\mathcal H$. We have thus completed a proof of Theorem \ref{semiselcontainsselultra}.
\end{proof}

\smallskip

As a final comment, we mention that the following two questions about semiselective ideals containing selective ultrafilters remain open:

\begin{question} 
Can the hypothesis $\neg \mathrm{SH}$ be eliminated in Theorem \ref{coro_seminoselultra}? In other words, does $\mathrm{CH}$ imply that there are semiselective coideals that do not contain a selective ultrafilter?
\end{question}

\begin{question} 
In the iteration used to prove Theorem \ref{semiselcontainsselultra}, although the iterands at stages of uncountable cofinality add no new reals, in the final model the continuum hypothesis does not hold, so the following question arises: Is it consistent with $\mathrm{CH}$ that every semiselective coideal contains a selective ultrafilter?
\end{question}

\section*{Acknowledgments}

The first author was partially supported by the Fondo de Investigaciones de la Facultad de Ciencias de la Universidad de los Andes, grant INV-2021-127-2320. Also, this author thanks both the Fields Institute of the University of Toronto and the Centro de Ciencias Matem\'{a}ticas of the Universidad Nacional Aut\'{o}noma de M\'{e}xico Campus Morelia for their hospitality and partial support during the preparation of this article.

\smallskip

The second author thanks Joan Bagaria and the Mathematics Institute of the University of Barcelona for their hospitality and interesting conversations during the preparation of this article.

\smallskip

The third author was partially supported by the CONACyT grant A1-S-16164 and a PAPIIT grant IN 101323.

\smallskip

The authors thank the anonymous referee for his/her comments and suggestions, which improved the presentation of this paper.

\kern1em
\Addresses

\end{document}